\documentclass[10pt,a4paper]{amsart}%
\usepackage{amsfonts}
\usepackage{amssymb}
\usepackage[utf8]{inputenc}
\usepackage{amsmath}
\usepackage{graphicx}%
\setcounter{MaxMatrixCols}{30}
\providecommand{\U}[1]{\protect\rule{.1in}{.1in}}
\vfuzz2pt
\hfuzz2pt
\newtheorem{theorem}{Theorem}[section]
\newtheorem{proposition}[theorem]{Proposition}
\newtheorem{lemma}[theorem]{Lemma}
\newtheorem{corollary}[theorem]{Corollary}

\theoremstyle{remark}

\newcommand{\remove}[1]{ }

\def\R{\mathbb R}
\def\be{\begin{equation}}
\def\ee{\end{equation}}
\def\ba{\begin{eqnarray}}
\def\ea{\end{eqnarray}}

\def\vf{\varphi}
\def\al{\alpha}
\def\ii{\int\!\!\!\int}

\setlength{\oddsidemargin}{ 0.0 in} \setlength{\parindent}{ 24pt}
\setlength{\evensidemargin}{ 0.0 in} \setlength{\parindent}{ 24pt}
\setlength{\textheight} {9.5 in} \setlength{\textwidth}{ 6.5 in}
\setlength{\topmargin}{ -.4 in}

\numberwithin{equation}{section}
\begin{document}
\title{Internal controllability of the Korteweg-de Vries equation on a bounded domain}
\author[Capistrano--Filho]{R. A. Capistrano--Filho}
\address{Instituto de Matem\'{a}tica, Universidade Federal do Rio de Janeiro, C.P.
68530 - Cidade Universit\'{a}ria - Ilha do Fund\~{a}o, 21941-909 Rio de
Janeiro (RJ), Brazil and Institut Elie Cartan, UMR 7502 UHP/CNRS/INRIA, BP
70239, 54506 Vand\oe uvre-les-Nancy Cedex, France}
\email{capistrano@im.ufrj.br}
\author[Pazoto]{A. F. Pazoto}
\address{Instituto de Matem\'{a}tica, Universidade Federal do Rio de Janeiro, C.P.
68530 - Cidade Universit\'{a}ria - Ilha do Fund\~{a}o, 21941-909 Rio de
Janeiro (RJ), Brazil}
\email{ademir@im.ufrj.br}
\author[Rosier]{L. Rosier}
\address{Institut Elie Cartan, UMR 7502 UHP/CNRS/INRIA, BP 70239, 54506
Vand\oe uvre-les-Nancy Cedex, France}
\email{rosier@iecn.u-nancy.fr}
\subjclass[2000]{Primary: 35Q53, Secondary: 37K10, 93B05, 93D15}
\keywords{KdV equation, Carleman estimate, null controllability, exact controllability}

\begin{abstract}
This paper is concerned with the control properties of the Korteweg-de Vries (KdV)  equation posed on a bounded interval 
with a distributed control. When the control region is an arbitrary open subdomain, we prove the null controllability of the KdV equation
by means of a new Carleman inequality. As a consequence, we obtain a regional controllability result, the state function being controlled on the
left part of the complement of the control region. Finally,  when the control region is a neighborhood of the right endpoint, 
an exact controllability result in a weighted $L^2$-space is also established.
\end{abstract}
\maketitle

\section{Introduction\label{Sec0}}

The Korteweg--de Vries (KdV) equation can be written
\[
u_t+u_{xxx}+u_x+uu_x=0,
\]
where $u=u(t,x)$ is a real-valued function of two real variables $t$ and $x$, and $u_t=\partial u/\partial t$, etc. 
The equation was first derived by Boussinesq \cite{Boussinesq} and Korteweg-de Vries  \cite{Korteweg} as
a model for the propagation of water waves along a channel. 
The equation furnishes also a very useful approximation model 
in nonlinear studies whenever one wishes to include and balance a 
weak nonlinearity and weak dispersive effects.  In particular, the equation is now
commonly accepted as a mathematical model for the unidirectional propagation of small amplitude long waves
in nonlinear dispersive systems.

The KdV equation has been intensively studied from various aspects of mathematics, including the 
well-posedness, the existence and stability of solitary waves, the integrability, the long-time behavior, etc. (see e.g.
\cite{Kenig,miura}).
The practical use of the KdV equation does not always involve the pure initial value problem. 
In numerical studies, one is often interested in using a finite interval (instead of the whole line) with three
boundary  conditions. 

Here, we shall be concerned with the control properties of KdV, the control acting
through a forcing term $f$ incorporated in the equation:

\be
u_{t}+u_{x}+u_{xxx}+ uu_{x}=f, \quad t\in [0,T], \ x\in[0,L], \qquad + \ \text{ b.c.} 
\label{a1}
\ee

Our main purpose is to see whether one can force the solutions of \eqref{a1}
to have certain desired properties by choosing an appropriate control input
$f$. The focus here is on the {\em controllability} issue: 

\noindent\textit{Given an initial state $u_{0}$ and a
terminal state $u_{1} $ in a certain space, can one find an
appropriate control input $f$ so that the equation (\ref{a1}) admits a
solution $u$ which equals $u_{0}$ at time $t=0$ and  $u_{1}$ at time
$t=T$?}

If one can always find a control input $f$ to guide the system described by
(\ref{a1}) from any given initial state $u_{0}$ to any given terminal state
$u_{1}$, then the system (\ref{a1}) is said to be {\em exactly
controllable}. If the system can be driven, by means of a control $f$, from
any state to the origin (i.e. $u_{1}\equiv0$), then one says that system \eqref{a1} is
{\em null controllable}.

The study of the controllability and stabilization of the KdV equation started with the works
of Russell and Zhang  \cite{Russell1}  for a system with periodic
boundary conditions and an internal control. Since then, both the controllability and the stabilization have
been intensively studied. (We refer the reader to \cite{RZsurvey} for a survey of the results up to 2009.)  
In particular, the exact boundary controllability of KdV on
a finite domain was investigated in e.g. \cite{cerpa,cerpa1,coron,GG,GG1,Rosier,Rosier2,Zhang2}.
Most of those works were concerned with the following system
\begin{equation}
\left\{
\begin{array}
[c]{lll}%
u_{t}+u_{x}+u_{xxx}+uu_{x}=0 &  & \text{in }(  0,T)  \times(
0,L)  \text{,}\\
u(  t,0)  =g_{1}(t),\,u(  t,L)  =g_{2}(t),\,u_{x}(
t,L)  =g_{3}(t) &  & \text{in }(  0,T)  
\end{array}
\right.  \label{int1-bc}%
\end{equation}
in which the boundary data $g_1,g_2,g_3$ can be chosen as control inputs. 
System \eqref{int1-bc} was first studied by Rosier \cite{Rosier} considering
only the control input $g_{3}$  (i.e. $g_1=g_2=0$). It was shown in \cite{Rosier} 
that the exact controllability of the linearized system holds in $L^2(0,L)$ if, and only if,
$L$ does not belong to the following countable set of
{\em critical lengths}
\begin{equation}
\mathcal{N}:=\left\{  \frac{2\pi}{\sqrt{3}}\sqrt{k^{2}+kl+l^{2}}%
\,:k,\,l\,\in\mathbb{N}^{\ast}\right\}  . \label{critical}%
\end{equation}
The analysis developed in \cite{Rosier} shows that when the linearized system is
controllable, the same is true for the nonlinear one. Note that the converse is false, as it was
proved in \cite{cerpa,cerpa1,coron} that the (nonlinear) KdV equation is controllable even when $L$ is a critical length. 
The existence of a discrete set of critical lengths for which the exact controllability of the linearized equation fails was also
noticed by Glass and Guerrero in \cite{GG1} when $g_2$ is taken as control input (i.e. $g_1=g_3=0$).  
 Finally, it is worth mentioning the result by Rosier \cite{Rosier2} and Glass and Guerrero \cite{GG} for which
 $g_1$ is taken as control input (i.e. $g_2=g_3=0$). They
proved that system \eqref{int1-bc} is then null controllable, but not exactly controllable, because of the strong smoothing effect.

By contrast, the mathematical theory pertaining to the study of the internal
controllability in a bounded domain is considerably less advanced. As far as we know, the null
controllability problem for system \eqref{a1} was only addressed in \cite{GG}
when the control acts in a neighborhood of the left endpoint. 
On the other hand, the exact controllability results in \cite{Laurent,Russell1} were obtained on a periodic domain.

The aim of this paper is to address the controllability issue for the KdV equation on a 
bounded domain with a distributed control. Our first main result is a null controllability result valid for any localization of the control region. 
Actually, a controllability to the trajectories is established:

\begin{theorem}
\label{nullsysnon} Let $\omega =(l_1,l_2)$ with $0<l_1<l_2<L$, and let $T>0$. 
For $\bar{u}_{0}\in L^{2}(  0,L)  $, let
$\bar{u}\in C^0( [0,T];L^{2}(  0,L)  )  \cap
L^{2}(  0,T;H^1(  0,L)  )  $ denote the solution of%
\begin{equation}
\left\{
\begin{array}
[c]{lll}%
\bar{u}_{t}+\bar{u}_{x}+\bar{u}\, \bar{u}_{x}+\bar{u}_{xxx}=0 &  & \text{in
}(  0,T)  \times(  0,L)  \text{,}\\
\bar{u}(  t,0)  =\bar{u}(  t,L)  =\bar{u}_{x}(
t,L)  =0 &  & \text{in }(  0,T)  \text{,}\\
\bar{u}(  0,x)  =\bar{u}_{0}(  x)  &  & \text{in
}(  0,L)  \text{.}%
\end{array}
\right.  \label{ncn1}%
\end{equation}
Then there exists $\delta>0$ such that for any $u_{0}\in L^{2}(0,L)  $ satisfying $\left\Vert u_{0}-\bar{u}_{0}\right\Vert
_{L^{2}(  0,L)  }\leq\delta$, there exists $f\in 
L^{2}(  (0,T)  \times\omega)  $ such that the solution $u\in C^{0}(  [ 0,T]  ; L^{2}(  0,L)  )\cap L^2 (0,T,H^1(0,L))   $ 
of%
\begin{equation}
\left\{
\begin{array}
[c]{lll}%
u_{t}+u_{x}+uu_{x}+u_{xxx}=1_{\omega}f(  t,x)  &  & \text{in
}(  0,T)  \times(  0,L)  \text{,}\\
u(  t,0)  =u(  t,L)  =u_{x}(  t,L)  =0 &  &
\text{in }(  0,T)  \text{,}\\
u(  0,x)  =u_{0}(  x)  &  & \text{in }(
0,L)  \text{,}%
\end{array}
\right.  \label{ncn2}%
\end{equation}
satisfies $u(  T,\cdot)  =\bar{u}(  T,\cdot)  $ in
$(  0,L)  $.
\end{theorem}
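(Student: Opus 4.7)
The plan is to reduce to a null controllability problem for a linear KdV equation with a potential, and then handle the nonlinearity by a standard fixed-point argument. More precisely, setting $w=u-\bar u$, the function $w$ must satisfy
\begin{equation*}
\left\{
\begin{array}{l}
w_t + w_x + w_{xxx} + (\bar u \, w)_x + w\, w_x = 1_\omega f,\\
w(t,0)=w(t,L)=w_x(t,L)=0,\\
w(0,x)=u_0(x)-\bar u_0(x),
\end{array}
\right.
\end{equation*}
and we need to drive $w$ to $0$ at time $T$ with a control of size controlled by $\|u_0-\bar u_0\|_{L^2}$. The first step is therefore to prove null controllability for the associated linear problem with a given coefficient $a\in L^2(0,T;H^1(0,L))\cap C^0([0,T];L^2(0,L))$ in place of $\bar u$, and with an extra source term $g$ in the right-hand side belonging to a suitable weighted space. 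This linear controllability will be obtained, via a duality argument (HUM), from an observability inequality for the adjoint backward system
\begin{equation*}
-\vf_t - \vf_x - \vf_{xxx} - a\, \vf_x = 0,\qquad \vf(t,0)=\vf(t,L)=\vf_x(t,0)=0,\qquad \vf(T,\cdot)=\vf_T.
\end{equation*}

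The heart of the proof is a global Carleman estimate for this adjoint equation with a single internal observation on $\omega$. Following the Fursikov--Imanuvilov philosophy adapted to third-order operators, I would construct a weight of the form $\rho(t,x)=\theta(t)\psi(x)$, where $\theta(t)\sim [t(T-t)]^{-1}$ blows up at $t=0$ and $t=T$, and $\psi\in C^\infty([0,L])$ is a positive function whose critical points all lie in $\omega$ and whose derivative $\psi'$ has a definite sign on $(0,L)\setminus\omega$, chosen so that all boundary terms arising from integrations by parts in $u_{xxx}$ have a favourable sign given the boundary conditions $\vf=\vf_x=0$ at $x=0$ and $\vf=0$ at $x=L$. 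After conjugation of the operator by $e^{-s\rho}$, one expands $\int\!\!\int e^{-2s\rho}(\vf_t+\vf_x+\vf_{xxx})^2\,dx\,dt$, groups the resulting terms, and absorbs the lower-order contributions ($a\vf_x$ in particular) by choosing $s$ large enough. The result should be an estimate of the form
\begin{equation*}
\int_0^T\!\!\int_0^L e^{-2s\rho}\bigl( s\theta |\vf_{xx}|^2 + s^3\theta^3|\vf_x|^2+s^5\theta^5|\vf|^2\bigr)\,dx\,dt
\;\le\; C\int_0^T\!\!\int_\omega e^{-2s\rho} s^5\theta^5|\vf|^2\,dx\,dt.
\end{equation*}

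From this Carleman estimate one deduces, by the usual energy argument on $[0,T/2]$ combined with the dissipative estimates for KdV, the observability inequality $\|\vf(0,\cdot)\|_{L^2}^2\le C\int_0^T\!\!\int_\omega |\vf|^2\,dx\,dt$, which gives null controllability of the linear system. The crucial refinement is to produce a control with weighted $L^2$-norm controlled by the $L^2$-norm of the initial data times a weight function that decays as $t\to T$; the source term $g$ must be taken in a correspondingly smaller weighted space. This yields a bounded linear operator from admissible data to controlled trajectories. The nonlinear problem is then closed by a fixed-point argument: writing $w\, w_x = (w^2/2)_x$ and treating it as a source term $g=-(w^2/2)_x$ in the linear problem, one defines an operator $\Lambda:w\mapsto \tilde w$, where $\tilde w$ solves the linear controllability problem with coefficient $a=\bar u$ and source $g=-(w^2/2)_x$, and shows by the inverse mapping theorem (or Banach contraction) that $\Lambda$ has a fixed point in a small ball of the appropriate weighted space provided $\delta=\|u_0-\bar u_0\|_{L^2}$ is small enough.

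The main technical obstacle is clearly the Carleman estimate, and within it the choice of the weight $\psi$: unlike in the parabolic setting, the third-order operator forces strong boundary terms at $x=0$ and $x=L$, and one must simultaneously make $\psi$ have critical points inside $\omega$ (to absorb the observation cost) and ensure that the boundary contributions are non-negative under the available adjoint boundary conditions. A secondary difficulty is the regularity/integrability of the potential $a=\bar u$: since $\bar u$ is only in $L^2(0,T;H^1)$, absorbing the term $a\vf_x$ by the Carleman weights requires careful interpolation, and the sharp way to set up the fixed-point space (a weighted space with blow-up at $t=T$) must be compatible with the estimates available for the nonlinear term $w\, w_x$.
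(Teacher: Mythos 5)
Your overall strategy coincides with the paper's: linearize around $\bar u$, prove a Carleman estimate with internal observation for the adjoint of the linearized operator, deduce an observability inequality and hence null controllability of the linear system, and conclude by a fixed point. There is, however, a genuine gap at the heart of the argument, namely in the form of the Carleman estimate you claim. Expanding $\int\!\!\!\int e^{-2s\rho}|Lv|^2$ and grouping terms, as you describe, produces coefficients of $|v|^2$, $|v_x|^2$ and $|v_{xx}|^2$ whose favourable sign is guaranteed only \emph{outside} $\omega$ (that is where $\psi''<0$ and $|\psi'|>0$ hold); inside $\omega$ these coefficients are uncontrolled and must be moved to the right-hand side. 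The estimate one actually obtains therefore carries the full local observation $\int_0^T\!\!\int_\omega \left( s\vf|v_{xx}|^2+(s\vf)^3|v_x|^2+(s\vf)^5|v|^2\right)e^{-2s\vf}\,dx\,dt$ on the right (this is Proposition \ref{prop10}), not just the zeroth-order term. Passing from that to an observation of $\|v(t,\cdot)\|^2_{L^2(\omega)}$ alone is a substantial separate step (Lemma \ref{newcarleman}): it uses interpolation inequalities such as $\|v\|_{H^1(\omega)}\le K\|v\|_{H^{8/3}(\omega)}^{3/8}\|v\|_{L^2(\omega)}^{5/8}$, Young's inequality with two distinct time weights $\hat\vf,\check\vf$ whose compatibility is precisely the purpose of the pointwise constraint $\psi(0)<\tfrac43\psi(l_3)$ on the weight, and a three-stage bootstrap through the smoothing estimates of Propositions \ref{est2}--\ref{est3} to control $\|v\|_{H^{8/3}(\omega)}$ by the left-hand side. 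For second-order parabolic operators the gradient term can be removed from the observation by a local integration by parts against a cutoff, but for the third-order operator here, with $|v_{xx}|^2$ in the observation, no such elementary trick is available; your proposal is silent on this point, and the estimate as you state it does not follow from the computation you describe.

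Two further remarks. First, the weight cannot have $\psi'$ of one definite sign on all of $(0,L)\setminus\omega$: the boundary terms force $\psi'(0)<0$ and $\psi'(L)>0$ (opposite signs on the two components), consistent with the critical point of $\psi$ lying inside $\omega$. Second, for the nonlinear step the paper does not use Banach contraction with $ww_x$ as a source; it folds the quadratic term into the potential, taking $\xi=\bar u+z/2$ in the linearized problem, and applies Kakutani's set-valued fixed-point theorem on a compact convex set of $L^2((0,T)\times(0,L))$. Your route (source term $-(w^2/2)_x$ plus the inverse mapping theorem in weighted spaces in the style of Fursikov--Imanuvilov) is plausible but requires proving linear controllability with a source term in a weighted space compatible with the quadratic nonlinearity, which is additional work you have not outlined; the Kakutani route avoids having to show that the control depends continuously on the potential. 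Note also that the passage from the Carleman estimate to $\|v(0)\|^2_{L^2(0,L)}\le C\int_0^T\|v(t,\cdot)\|^2_{L^2(\omega)}\,dt$ uses an energy estimate that requires $\|\xi\|_{L^2(0,T;H^1(0,L))}$ to be small, which is why the proof first reduces to a small time interval; your sketch does not flag this restriction.
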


The null controllability is first established for a linearized system
\begin{equation}
\left\{
\begin{array}
[c]{lll}%
u_{t}+\left(  \xi u\right)  _{x}+u_{xxx}= 1_{\omega} f &  & \text{in }\left(  0,T\right)
\times\left(  0,L\right)  \text{,}\\
u\left(  t,0\right)  =u\left(  t,L\right)  =u_{x}\left(  t,L\right)  =0 &  &
\text{in }\left(  0,T\right)  \text{,}\\
u\left(  0,x\right)  =u_{0}\left(  x\right)  &  & \text{in }\left(
0,L\right)  \text{,}%
\end{array}
\right.  \label{int}%
\end{equation}
by following the classical duality approach (see \cite{dolecki,Lions1}), which
reduces the null controllability of \eqref{int} to  an observability inequality for the solutions of 
the adjoint system. To prove the observability inequality, we derive a new Carleman estimate with an internal
observation in $(0,T)\times (l_1,l_2)$ and use some interpolation arguments inspired by those in \cite{GG}, where the authors
derived a similar result when the control acts on a neighborhood on the left endpoint (that is, $l_1 =0$). 
The null controllability is extended to the nonlinear system by applying Kakutani fixed-point theorem. 

The second problem we address is related to the exact internal
controllability of system \eqref{a1}. As far as we know, the same problem
was studied only in \cite{Laurent,Russell1} in a periodic
domain $\mathbb{T}$ with a distributed control of the form
\[
f(x,t)=(Gh)(x,t):= g(x)(  h(x,t)-\int_{\mathbb{T}}g(y) h(y,t) dy),
\]
where $g\in C^\infty (\mathbb T)$ was such that $\{g>0\} = \omega$ and $\int_{\mathbb T} g(x)dx=1$, and 
the function $h$ was considered as a new control input. Here, we shall consider the system 
\begin{equation}
\left\{
\begin{array}
[c]{lll}%
u_{t}+u_{x}+uu_{x}+u_{xxx}= f  &  & \text{in }(  0,T)  \times(  0,L)  \text{,}\\
u(  t,0)  =u(  t,L)  =u_{x}(  t,L)  =0 &  & \text{in }(  0,T)  \text{,}\\
u(  0,x)  =u_{0}(  x)  &  & \text{in }( 0,L)  \text{.}%
\end{array}
\right.  \label{S1}%
\end{equation}

As the smoothing effect is different from those in a periodic domain, the results in this paper turn out to be very different from those in 
\cite{Laurent,Russell1}. 
First, for a controllability result in $L^2(0,L)$, the control $f$ has to be taken in the space $L^2(0,T,H^{-1}(0,L))$. Actually, with   
any control $f\in L^2(0,T,L^2(0,L))$, the solution of \eqref{S1} starting from $u_0=0$ at $t=0$ would remain in $H^1_0(0,L)$ (see \cite{GG}). 
On the other hand, as for the boundary control, the localization of the distributed control plays a role in the results. 
 
When the control acts in a neighborhood of $x=L$,  we obtain the exact controllability in the weighted Sobolev
space $L^{2}_{\frac{1}{L-x}  dx}$ defined as
\[
L^{2}_{\frac{1}{L-x}  dx}:=\{  u\in L^1_{loc}(0,L);
\int_{0}^{L}\frac{\left\vert u(x)
\right\vert ^{2}}{ L-x  } dx<\infty\} .
\]
 
More precisely, we shall obtain the following result:
\begin{theorem}
\label{thmB}
Let $T>0$, $\omega=(l_1,l_2)=(L-\nu ,L)$ where $0<\nu <L$.
Then, there exists $\delta>0$ such that for any $u_{0}$, $u_{1}\in L^2_{  \frac{1}{L-x}  dx}$
with
\[
\left\Vert u_{0}\right\Vert _{L^2_{ \frac{1}{L-x}  dx} } \leq\delta \ \text{ and } \  
\left\Vert u_{1}\right\Vert _{L^2_{  \frac{1}{L-x}  dx}}\leq\delta ,
\]
one can find a control input $f\in L^{2}(  0,T;H^{-1}(0,L))  $ with $\text{supp} (f)\subset (0,T) \times \omega$ 
such that the solution $u\in C^0([0,L],L^2 (0,L) )
\cap L^2(0,T,H^1(0,L))$ of (\ref{S1}) satisfies $u(T,.)  =u_{1}\,\text{ in } (0,L)$ and $u\in C^0([0,T],L^2_{  \frac{1}{L-x}  dx} )$. Furthermore, 
$f\in L^2_{ (T-t) dt}(0,T,L^2(0,L))$.  
\end{theorem}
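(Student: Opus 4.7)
The plan is to prove exact controllability of the linearized equation $u_t+u_x+u_{xxx}=f$ in $L^2_{\frac{1}{L-x}dx}$ with control $f\in L^2(0,T;H^{-1}(0,L))$ supported in $\omega$, and then pass to the nonlinear KdV by a fixed-point argument. For the linear problem, I would decompose the time interval in two: on $[0,T/2]$, apply Theorem~\ref{nullsysnon} (available since $L^2_{\frac{1}{L-x}dx}\hookrightarrow L^2$) on a slightly smaller interval $\omega'\Subset\omega$ to drive $u_0$ to $0$ using an $L^2$-control; on $[T/2,T]$, steer $0$ to $u_1$ using the full control window up to the boundary $x=L$, which is what enables reaching the weighted space.

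The reachability step is the key new ingredient. By the Hilbert Uniqueness Method, it is equivalent to an observability inequality of the form
\[
\int_0^L |\phi_T(x)|^2(L-x)\,dx \le C\int_{T/2}^T \|\phi(t,\cdot)\|_{H^1(\omega)}^2\,dt,
\]
for any solution $\phi$ of the backward adjoint problem $-\phi_t-\phi_x-\phi_{xxx}=0$ with boundary conditions $\phi(t,0)=\phi(t,L)=\phi_x(t,0)=0$ and terminal value $\phi(T,\cdot)=\phi_T$. The weight $(L-x)$ arises as the predual of $\frac{1}{L-x}$, while the $H^1(\omega)$-norm on the right is dual to the $H^{-1}$-regularity of the control. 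Applying the space--time reversal $\tilde\phi(t,x):=\phi(T-t,L-x)$ transforms the adjoint equation into a forward linear KdV with the original right-endpoint boundary conditions $\tilde\phi(0)=\tilde\phi(L)=\tilde\phi_x(L)=0$ and moves the observation set to $(0,\nu)$, so the required estimate becomes a weighted observability with weight $y$ on the initial datum of the transformed problem.

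To derive this weighted observability, I would adapt the Carleman estimate developed for Theorem~\ref{nullsysnon} using a different weight function that vanishes like $x$ at the left endpoint, so that the factor $(L-x)$ (equivalently $y$ after the space reversal) naturally appears on the left-hand side. The observation region touching the boundary and the stronger $H^1$ norm (rather than $L^2$) require additional care: either apply the Carleman estimate simultaneously to $\tilde\phi$ and its spatial derivative, or bootstrap interior regularity from a slightly shrunk subdomain $\omega''\Subset\omega$, combined with a Kato-type boundary-multiplier identity to convert interior $L^2$ observation into the weighted estimate on the initial data. The temporal weight $(T-t)$ in the conclusion should emerge from the HUM construction when the functional is carefully analyzed near the final time.

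The nonlinear controllability then follows by a contraction argument. Define $\Gamma(v)=u$, where $u$ solves the linear controllability problem with source $-vv_x$ and data $u_0,u_1$; by the Kato smoothing and the Sobolev embedding $H^1\hookrightarrow L^\infty$ in one dimension, the product $vv_x=(v^2/2)_x$ lies in $L^2(0,T;H^{-1}(0,L))$ whenever $v\in C([0,T];L^2)\cap L^2(0,T;H^1)$, so the linear theory applies. For $\delta$ sufficiently small, $\Gamma$ is a contraction on a small ball of $C([0,T];L^2_{\frac{1}{L-x}dx})\cap L^2(0,T;H^1(0,L))$, yielding the theorem. The main obstacle throughout is the weighted observability: designing a Carleman weight that vanishes at the correct endpoint to produce the factor $(L-x)$, remains compatible with the dispersive Carleman calculus, and accommodates observation up to the boundary in the $H^1$ norm is the delicate step.
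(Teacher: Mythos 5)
Your overall architecture (linear exact controllability via HUM plus a duality/observability inequality, then a contraction argument for the nonlinear equation) matches the paper's, but two of your key steps contain genuine gaps. First, the observability inequality. You propose to prove
\[
\int_0^L |\phi_T|^2 (L-x)\,dx \le C\int_0^T \|\phi(t,\cdot)\|^2_{H^1(\omega)}\,dt
\]
by adapting the Carleman estimate of Proposition \ref{prop10} with a weight vanishing like $x$ at an endpoint. That Carleman machinery relies essentially on $\psi>0$ on all of $[0,L]$ (condition \eqref{C1}) and on uniform lower bounds of the form $-9s^5\varphi_x^4\varphi_{xx}\ge \kappa (s\varphi)^5$ off $\omega$; a weight degenerating at the boundary destroys both, and nothing in your sketch shows how to recover them. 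The paper avoids Carleman entirely here: after the reversal $w(t,x)=v(T-t,L-x)$ it uses the multiplier $q(t,x)=(T-t)b(x)$ with $b(x)=x$ near $0$ (which is what produces the weight $x$, i.e.\ $L-x$ before reversal, on the initial datum), obtaining $\|w_0\|^2_{L^2_{xdx}}\le C(\int_0^T\!\!\int_0^{\nu/2}w_x^2+\int_Q w^2)$, and then removes the lower-order term by compactness--uniqueness and Holmgren's theorem. This is both more elementary and actually closes; your route is an unproven (and probably unworkable) replacement for the one genuinely delicate step.

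Second, your nonlinear fixed point is set in $C([0,T];L^2_{\frac{1}{L-x}dx})\cap L^2(0,T;H^1(0,L))$ and you feed $vv_x\in L^2(0,T;H^{-1})$ into the linear theory. This does not close in the weighted space: to keep the Duhamel term in $C([0,T];L^2_{\frac{1}{L-x}dx})$ you need $vv_x$ in (say) $L^1(0,T;L^2_{\frac{1}{L-x}dx})$, and $\|vv_x\|_{L^2_{\frac{1}{L-x}dx}}\le \|v\|_{L^\infty}\|(L-x)^{-1/2}v_x\|_{L^2}$ requires control of the \emph{weighted} derivative $(L-x)^{-1}v_x\in L^2$, which the unweighted $H^1$ norm does not provide. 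The paper works in $V=\{u\in H^1_0:\ (L-x)^{-1}u_x\in L^2\}$ precisely for this reason (Proposition \ref{prop_weight}), and this in turn forces the whole preliminary development: generation of the semigroup in $L^2_{\frac{1}{L-x}dx}$ and $L^2_{xdx}$ via the Goubet--Shen variational scheme, the weighted Kato smoothing $\|u\|_{L^2(0,T;V)}\le C\|u_0\|_H$, and the nonhomogeneous estimates for sources of the form $(\rho h)_x$. None of these appear in your proposal, and without them neither the solution map nor the control operator $\Gamma$ is defined in the spaces where the theorem is stated. (Your time-splitting into a null-control phase and a reachability phase is a legitimate variant the paper does not use, but it adds the extra burden of checking that the first-phase $L^2$ control, if kept compactly supported away from $x=L$, preserves the weighted space; the paper instead controls directly from $u_0$ to $u_1$ in one step.)
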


Actually, we shall have to investigate the well-posedness of 
the linearization of \eqref{S1} in the space $L^2_{ \frac{1}{L-x}dx}$ and  the well-posedness
 of the (backward) adjoint system in  the ``dual space'' $L^2_{ (L-x) dx}$.
To do this, we shall follow some ideas borrowed from \cite{goubet}, where the well-posedness was investigated in 
the weighted space $L^2_{\frac{x}{L-x}dx}$. 
The needed observability inequality is obtained by the standard compactness-uniqueness argument and some unique continuation property. 
The exact controllability is extended to the nonlinear system by using the contraction mapping principle. 

When the control is acting far from the endpoint $x = L$, i.e. in some interval $\omega =(l_1,l_2)$ with $0<l_1<l_2<L$, then there is no chance to control 
exactly the state function on $(l_2,L)$ (see e.g. \cite{Rosier2}). However, it is possible to control the state function on $(0,l_1)$, so that a ``regional controllability''
can be established:

\begin{theorem}
\label{thmC}
Let $T>0$ and $\omega = (l_1,l_2)$ with $0<l_1<l_2<L$.  Pick any number $l_1'\in (l_1,l_2)$. Then there exists a number $\delta >0$ such that  for any
$u_0,u_1\in L^2(0,L)$ satisfying 
\[
||u_0||_{L^2(0,L)} \le \delta, \qquad ||u_1||_{L^2(0,L)} \le \delta ,
\]
one can find a control $f\in L^2(0,T,H^{-1}(0,L))$ with $\text{supp}(f)\subset (0,T)\times \omega$ such that the solution 
$u\in C^0([0,T],L^2(0,L)) \cap L^2(0,T,H^1 (0,L))$ of \eqref{S1} satisfies 
\be
\label{corS1}
u(T,x) =\left\{ 
\begin{array}{ll}
u_1(x) & \text{ if }  \ x\in (0,l_1');\\
0 &\text{ if } \ x\in (l_2,L).
\end{array}
\right. 
\ee
\end{theorem}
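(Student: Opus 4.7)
The plan is to split the time interval $[0,T]$ into two halves and apply Theorems~\ref{nullsysnon} and~\ref{thmB} sequentially. Since Theorem~\ref{nullsysnon} drives any small state exactly to zero, I first steer $u_0$ to $0$ on $[0,T/2]$. Then, choosing any $\bar{L}\in(l_1',l_2)$, the subinterval $(l_1',\bar{L})=(\bar{L}-\nu,\bar{L})$ with $\nu=\bar{L}-l_1'$ lies inside $\omega$ and is a neighborhood of the right endpoint of the reduced interval $(0,\bar{L})$, so Theorem~\ref{thmB} applies on that shorter interval. Extending the resulting solution by zero past $\bar{L}$ will realize the desired regional target on $(0,L)$.

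Concretely, on $[0,T/2]$ I apply Theorem~\ref{nullsysnon} with reference trajectory $\bar{u}\equiv 0$: for $\|u_0\|_{L^2(0,L)}\le\delta_1$ small enough, this yields a control $f_1\in L^2((0,T/2)\times\omega)$ driving $u_0$ exactly to $0$ at $t=T/2$. On $[T/2,T]$ I take as target on the smaller interval the truncation $\tilde{u}_1:=u_1\, 1_{(0,l_1')}$, whose norm in $L^{2}_{\frac{1}{\bar{L}-x}dx}(0,\bar{L})$ is bounded by $(\bar{L}-l_1')^{-1/2}\|u_1\|_{L^2(0,L)}$. Theorem~\ref{thmB} (applied with $L$ replaced by $\bar{L}$ and $\omega$ replaced by $(l_1',\bar{L})$) then provides, for $\|u_1\|_{L^2(0,L)}\le\delta$ small enough, a control $f_2\in L^2(T/2,T;H^{-1}(0,\bar{L}))$ with $\text{supp}(f_2)\subset (T/2,T)\times(l_1',\bar{L})$ and a solution $v$ of KdV on $(0,\bar{L})$ satisfying $v(T/2)=0$, $v(T)=\tilde{u}_1$, and the boundary conditions $v(t,0)=v(t,\bar{L})=v_x(t,\bar{L})=0$. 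Extending $v$ by $0$ on $(\bar{L},L)$ produces $\tilde{v}\in C([T/2,T];L^{2}(0,L))\cap L^{2}(T/2,T;H^{1}(0,L))$ that matches the boundary conditions of \eqref{S1}; because $v$ and $v_x$ already vanish at $\bar{L}$, the extension solves KdV distributionally on $(0,L)$ with right-hand side
\[
g = f_2\, 1_{(0,\bar{L})}\; -\; v_{xx}(\cdot,\bar{L})\,\delta_{\bar{L}},
\]
the Dirac contribution arising from the jump of $\tilde{v}_{xx}$ at $\bar{L}$. Both pieces of $g$ are supported in $[l_1',\bar{L}]\subset\omega$, so the concatenation $f = f_1\, 1_{[0,T/2]}+g\, 1_{[T/2,T]}\in L^{2}(0,T;H^{-1}(0,L))$ has support in $(0,T)\times\omega$, and the corresponding solution (continuous at $t=T/2$, where both sides are zero) satisfies \eqref{corS1} at $t=T$.

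The main technical obstacle is to justify that $g\in L^{2}(T/2,T;H^{-1}(0,L))$, which reduces to showing $v_{xx}(\cdot,\bar{L})\in L^2(T/2,T)$. This is a hidden-regularity statement at an interior point where $v$ and $v_x$ vanish, analogous to the well-known $L^2$-trace regularity of $v_x(\cdot,0)$ for KdV on a bounded interval, and should follow from a standard multiplier/energy argument on $(0,\bar{L})$ using the equation for $v$, the vanishing boundary data at $\bar{L}$, and the regularity provided by Theorem~\ref{thmB}. The final $\delta$ in Theorem~\ref{thmC} is then obtained as a suitable minimum of the small constants furnished by Theorems~\ref{nullsysnon} and~\ref{thmB}, rescaled by the factor $(\bar{L}-l_1')^{1/2}$ coming from the embedding $L^2(0,L)\hookrightarrow L^2_{\frac{1}{\bar{L}-x}dx}(0,\bar{L})$ applied to $\tilde{u}_1$.
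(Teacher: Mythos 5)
Your overall architecture is the same as the paper's for the first half (drive $u_0$ to $0$ on $[0,T/2]$ via Theorem \ref{nullsysnon}) but differs in the second half: the paper constructs the auxiliary trajectory on a subinterval $(0,l_2')$ by the \emph{boundary} controllability result of Rosier (choosing $l_2'\notin\mathcal{N}$), whereas you invoke the \emph{internal} result of Theorem \ref{thmB} on $(0,\bar L)$. That substitution is reasonable in principle (and avoids having to dodge critical lengths). The problem is in how you transplant the auxiliary solution back to $(0,L)$.

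The genuine gap is the Dirac term. Extending $v$ by zero across $x=\bar L$ produces, exactly as you note, the source $g=f_2\,1_{(0,\bar L)}-v_{xx}(\cdot,\bar L)\,\delta_{\bar L}$, and your argument then hinges on $v_{xx}(\cdot,\bar L)\in L^2(T/2,T)$. This is not ``analogous to the well-known $L^2$-trace regularity of $v_x(\cdot,0)$'': that hidden regularity gives the trace of the \emph{first} derivative for $L^2$ data, while you need the trace of the \emph{second} derivative, which sits one full derivative higher. In the paper's own scale of estimates, $\Vert v_{xx}(\cdot,L)\Vert_{L^2(0,T)}$ is controlled only for forcings in $X_{1/2}=L^2((0,T)\times(0,L))$ or $\tilde X_{1/2}=L^1(0,T;H^1_0)$ (Proposition \ref{est3}), whereas the solution delivered by Theorem \ref{thmB} is driven by $f_2=(\rho h)_x\in L^2(0,T;H^{-1}(0,L))$ (only $X_{1/4}$-level, giving $Y_{1/4}$ regularity, i.e.\ $v\in L^2(0,T;H^1)$), plus a nonlinear term $vv_x$ that is merely $L^1(0,T;L^2_{1/(\bar L-x)dx})$. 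The statement that $f_2\in L^2_{(T-t)dt}(0,T;L^2)$ does not rescue this near $t=T$. So the needed trace regularity is not available from anything proved in the paper, and a ``standard multiplier argument'' will not produce it from $L^2(0,T;H^1)$ regularity alone; as written, $g$ need not belong to $L^2(T/2,T;H^{-1}(0,L))$ and the concatenated control is not shown to be admissible.

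The paper sidesteps precisely this difficulty with a smooth cutoff $\mu$ equal to $1$ on $(0,l_1')$ and vanishing before the right endpoint of the auxiliary domain: setting $u=\mu y$, the error terms are volume terms such as $3\mu_x y_{xx}=3(\mu_x y_x)_x-3\mu_{xx}y_x$ and $\mu(\mu-1)yy_x$, all of which lie in $L^2(0,T;H^{-1}(0,L))$ using only $y\in C([0,T];L^2)\cap L^2(0,T;H^1)$, with no boundary trace of $y_{xx}$ ever appearing. Your argument can be repaired the same way: replace the zero-extension of $v$ at $\bar L$ by $\mu v$ with $\mu$ transitioning from $1$ to $0$ strictly inside $(l_1',\bar L)$; then the commutator terms are supported in $\omega$ and controlled by the $L^2(0,T;H^1)$ regularity of $v$, and the rest of your plan (including the norm bound on $\tilde u_1$ in $L^2_{1/(\bar L-x)dx}$ and the choice of $\delta$) goes through.
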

The proof of Theorem \ref{thmC} combines Theorem \ref{nullsysnon}, a boundary controllability result from \cite{Rosier}, and the use of a cutt-off function. 
Note that the issue whether $u$ may also be controlled in the interval $(l_1',l_2)$ is open. 

The paper is outlined as follows. In Section 2, we review some linear estimates  from \cite{GG,Rosier} that will be used thereafter. 
Section 3 is devoted to the proof of Theorems \ref{nullsysnon} and  \ref{thmC}. It contains the proof of a new Carleman estimate for the KdV equation
with some internal observation (Proposition \ref{prop10}). In Section 4 we prove the well-posedness of KdV in the weighted spaces $L^2_{xdx}$ and $L^2_{\frac{1}{L-x}dx}$ by  using semigroup theory, and derive Theorem \ref{thmB}. 
\section{Linear estimates\label{Sec1}}

We review a series of estimates for the system
\begin{equation}
\left\{
\begin{array}
[c]{lll}%
u_{t}+(  \xi u)  _{x}+u_{xxx}=  f(  t,x)  &  &
\text{in }(  0,T)  \times(  0,L)  \text{,}\\
u(  t,0)  =u(  t,L)  =u_{x}(  t,L)  =0 &  &
\text{in }(  0,T)  \text{,}\\
u(  0,x)  =u_{0}(  x)  &  & \text{in }(
0,L)  \text{}%
\end{array}
\right.  \label{1}%
\end{equation}
and its adjoint system.
Here $f= f (  t,x)  $ is a function which stands for 
the control of the system, and $\xi =\xi (t,x)$ is a given function. 

\subsection{The linearized KdV equation}

It was noticed in  \cite{Rosier} that the operator $A=-\dfrac{\partial^{3}%
}{\partial x^{3}}-\dfrac{\partial}{\partial x}$ with domain
\[
{\mathcal D}(  A)  =\left\{  w\in H^{3}(  0,L)  ; \ w(  0)
=w(  L)  =w_{x}(  L)  =0\right\}  \subseteq L^{2}(
0,L)
\]
is the infinitesimal generator of a strongly continuous semigroup of
contractions in $L^{2}(  0,L)  $. More precisely, the following result was established in \cite{Rosier}.

\begin{proposition}
\label{linkdv}Let $u_{0}\in L^{2}(  0,L)$, $\xi \equiv 1$  and $f\equiv0$. There
exists a unique (mild) solution $u$ of (\ref{1})
with%
\begin{equation}
u\in C([0,T];L^{2}(0,L))\cap L^2(0,T,H^1_0(0,L)).
\label{2}%
\end{equation}
Moreover, there exist positive constants $c_1$ and $c_2$ such that for all $u_0\in L^2(0,L)$ 
\begin{eqnarray}
\left\Vert u\right\Vert _{L^{2}(0,T;H^{1}(0,L))} + \left\Vert u_{x}(  .,0)  \right\Vert _{L^{2}(  0,T) }
&\leq& c_1\left\Vert u_{0}\right\Vert _{L^{2}(  0,L)  }, \label{4}\\
\left\Vert u_{0}\right\Vert _{L^{2}(  0,L)  } ^2
&\leq& \frac{1}%
{T}\left\Vert u\right\Vert _{L^{2}(  0,T;L^{2}(  0,L)
)  }^{2}+c_{2}\left\Vert u_{x}( .,0)  \right\Vert
_{L^{2}(  0,T)  }^{2}\text{.} \label{6}%
\end{eqnarray}
If in addition $u_{0}\in D(  A)  $, then (\ref{1}) has a unique
(classical) solution $u$ in the class%
\begin{equation}
u\in C([0,T];D(A))\cap C^{1}([0,T];L^{2}(0,L))\text{.} \label{3}%
\end{equation}
\end{proposition}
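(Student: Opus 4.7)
The plan is to establish this proposition by combining semigroup theory with the classical multiplier method. First, I would verify that the operator $A=-\partial_x^3 - \partial_x$ on the domain $\mathcal{D}(A)$ generates a $C_0$-semigroup of contractions on $L^2(0,L)$ by applying the Lumer--Phillips theorem. The key step is checking dissipativity: for $w\in \mathcal{D}(A)$, integration by parts together with the boundary conditions $w(0)=w(L)=w_x(L)=0$ gives
\[
\langle Aw,w\rangle_{L^2(0,L)} = -\int_0^L (w_{xxx}+w_x)w\,dx = -\tfrac{1}{2}|w_x(0)|^2 \le 0.
\]
The adjoint $A^*=\partial_x^3 + \partial_x$ with domain $\{w\in H^3(0,L): w(0)=w(L)=w_x(0)=0\}$ is similarly dissipative (yielding the term $-\tfrac{1}{2}|w_x(L)|^2$). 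Density of $\mathcal{D}(A)$ in $L^2(0,L)$ is clear, so the semigroup exists and yields a classical solution in the class \eqref{3} when $u_0\in\mathcal{D}(A)$, and a mild solution in $C([0,T];L^2(0,L))$ for any $u_0\in L^2(0,L)$.

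Next, I would derive the two multiplier identities on classical solutions and then extend by density. Multiplying the equation by $u$ and integrating over $(0,L)$, the term $\int_0^L u_x u\, dx$ vanishes and $\int_0^L u_{xxx}u\, dx = \tfrac{1}{2}|u_x(0,t)|^2$, giving the energy identity
\[
\tfrac{1}{2}\tfrac{d}{dt}\|u(t)\|_{L^2}^2 + \tfrac{1}{2}|u_x(0,t)|^2 = 0.
\]
Integration in $t$ yields both contractivity and the hidden-regularity trace bound $\|u_x(\cdot,0)\|_{L^2(0,T)}\le \|u_0\|_{L^2(0,L)}$, which is half of \eqref{4}.

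For the $H^1$ component of \eqref{4}, I would use the multiplier $xu$. A careful integration by parts using $u(t,L)=u_x(t,L)=0$ gives
\[
\int_0^L u_{xxx}\, xu\, dx = \tfrac{3}{2}\int_0^L |u_x|^2\, dx,
\qquad \int_0^L u_x\, xu\, dx = -\tfrac{1}{2}\int_0^L u^2\, dx,
\]
so that after time integration
\[
\tfrac{3}{2}\int_0^T\!\!\int_0^L |u_x|^2\, dx\,dt \le \tfrac{L}{2}\|u_0\|_{L^2}^2 + \tfrac{1}{2}\int_0^T \|u(t)\|_{L^2}^2\, dt \le \tfrac{L+T}{2}\|u_0\|_{L^2}^2,
\]
using contractivity in the last step. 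Combined with the boundary-trace bound this proves \eqref{4}, and the boundary conditions $u(t,0)=u(t,L)=0$ ensure the $H^1_0$ regularity stated in \eqref{2}.

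Finally, for the observability-type inequality \eqref{6}, I would multiply the equation by $(T-t)u$ and integrate over $(0,T)\times(0,L)$. The time integration by parts produces $-\tfrac{T}{2}\|u_0\|_{L^2}^2 + \tfrac{1}{2}\int_0^T\|u(t)\|_{L^2}^2\, dt$, while the spatial terms contribute $\tfrac{1}{2}\int_0^T (T-t)|u_x(0,t)|^2\, dt$; bounding $T-t\le T$ in the last integral yields \eqref{6} with $c_2=1$. The main technical obstacle throughout is the rigorous justification of these multiplier computations for mild solutions: one must first perform them on classical solutions with $u_0\in\mathcal{D}(A)$ (where all terms make sense pointwise in $t$), and then pass to the limit using the density of $\mathcal{D}(A)$ in $L^2(0,L)$ and the continuous dependence provided by the semigroup together with the a priori estimates just derived.
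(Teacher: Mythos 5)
Your proposal is correct and follows essentially the same route as the source: the paper itself only cites \cite{Rosier} for this proposition, and the argument there is exactly your combination of Lumer--Phillips for the semigroup, the multiplier $u$ for contractivity and the trace $u_x(\cdot,0)$, the multiplier $xu$ for the $L^2(0,T;H^1)$ bound, and the multiplier $(T-t)u$ for \eqref{6}, with the density/limiting step from $\mathcal{D}(A)$ handled as you describe. All of your integration-by-parts identities and the resulting constants check out.
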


\subsection{The modified KdV equation}
We introduce a system related to the adjoint system to \eqref{1}, namely
\begin{equation}
\left\{
\begin{array}
[c]{lll}%
-v_{t}-  \xi v  _{x}-v_{xxx}=f &  & \text{in }(  0,T)
\times(  0,L)  \text{,}\\
v(  t,0)  =v(  t,L)  =v_{x}(  t,0)  =0 &  &
\text{in }(  0,T)  \text{,}\\
v(  T,x)  =0 &  & \text{in }(  0,L) ,%
\end{array}
\right.  \label{mod}%
\end{equation}
for which we review some estimates borrowed from \cite{GG}. 
\subsubsection{Energy Estimates}
We introduce the following spaces%
\begin{equation}%
\begin{array}
[c]{lll}%
X_{0}:=L^{2}(  0,T;H^{-2}(  0,L)  )  \text{,} &  &
X_{1}:=L^{2}(  0,T;H_{0}^{2}(  0,L)  )  \text{,}\\
\tilde{X}_{0}:=L^{1}(  0,T;H^{-1}(  0,L)  )  \text{,} &
& \tilde{X}_{1}:=L^{1}(  0,T;(  H^{3}\cap H_{0}^{2})  (
0,L)  )  \text{,}%
\end{array}
\label{13}%
\end{equation}
and%
\begin{equation}%
\begin{array}
[c]{l}%
Y_{0}:=L^{2}(  (  0,T)  \times(  0,L)  )
\cap C^{0}(  \left[  0,T\right]  ;H^{-1}(  0,L)  )
\text{,}\\
Y_{1}:=L^{2}(  0,T;H^{4}(  0,L)  )  \cap C^{0}(
\left[  0,T\right]  ;H^{3}(  0,L)  )  \text{.}%
\end{array}
\label{14}%
\end{equation}
The spaces $X_{0},X_{1},\tilde{X}_0,\tilde{X}_1,Y_0$, and $Y_1$ are equipped with their natural norms. For instance, the spaces
$Y_{0}$ and $Y_{1}$ are equipped with the norms%
\[
\left\Vert w\right\Vert _{Y_{0}}:=\left\Vert w\right\Vert _{L^{2}(
(  0,T)  \times(  0,L)  )  }+\left\Vert
w\right\Vert _{L^{\infty}(  0,T;H^{-1}(  0,L)  )  }%
\]
and%
\[
\left\Vert w\right\Vert _{Y_{1}}:=\left\Vert w\right\Vert _{L^{2}(
0,T;H^{4}(  0,L)  )  }+\left\Vert w\right\Vert _{L^{\infty
}(  0,T;H^{3}(  0,L)  )  }\text{.}%
\]
For $\theta\in\left[  0,1\right]  $, we define the complex interpolation
spaces (see \cite{Berg} and \cite{Lions})%
\[
X_{\theta}=(  X_{0},X_{1})  _{\left[  \theta\right]  }\text{,
}\tilde{X}_{\theta}=(  \tilde{X}_{0},\tilde{X}_{1})  _{\left[
\theta\right]  }\text{ and }Y_{\theta}=(  Y_{0},Y_{1})  _{\left[
\theta\right]  }\text{.}%
\]
Then,%
\begin{equation}%
X_{1/4}=L^{2}(0,T; H^{-1}(0,L) ) , \quad
\tilde{X}_{1/4}=L^{1}(  0,T;L^2(  0,L)  )
\label{14_3}%
\end{equation}
and
\begin{equation}
Y_{1/4}=L^{2}(  0,T;H^{1}(  0,L)  )  \cap C^{0}(
\left[  0,T\right]  ;L^{2}(  0,L)  )  \text{.} \label{14_2}%
\end{equation}
Furthermore,%
\begin{equation}%
X_{1/2}=L^{2}(  (  0,T)  \times(  0,L)  ) , \quad
\tilde{X}_{1/2}=L^{1}(  0,T;H_{0}^{1}(  0,L)  )
\label{a14_3}%
\end{equation}
and%
\begin{equation}
Y_{1/2}=L^{2}(  0,T;H^{2}(  0,L)  )  \cap C^{0}(
\left[  0,T\right]  ;H^{1}(  0,L)  )  \text{.} \label{14_4}%
\end{equation}

\begin{proposition} (\cite[Section 2.2.2]{GG})
\label{est1} Let $\xi \in Y_{\frac{1}{4}}$ and  $f\in X_{\frac{1}{4}} \cup \tilde X_{\frac{1}{4}} = L^{2}(  0,T;H^{-1}(  0,L)  )  \cup
L^{1}(  0,T;L^{2}(  0,L)  )  $. Then  the solution $v$ of
(\ref{mod}) belongs to $Y_{\frac{1}{4}}$,
and there exists some constant $C=C(||\xi ||_{Y_{ \frac{1}{4} } }  )>0$ such that %
\begin{equation}
\left\Vert v\right\Vert _{L^{\infty}(  0,T,L^{2}(  0,L)
)  }+\left\Vert v\right\Vert _{L^{2}(  0,T;H^{1}(  0,L)
)  }+\left\Vert v_{x}(  \cdot,L)  \right\Vert _{L^{2}(
0,T)  }\leq C(  \left\Vert \xi\right\Vert _{Y_{1/4}})
\left\Vert f\right\Vert _{L^{2}(  0,T;H^{-1}(  0,L)  )
} \label{11}%
\end{equation}
and%
\begin{equation}
\left\Vert v\right\Vert _{L^{\infty}(  0,T,L^{2}(  0,L)
)  }+\left\Vert v\right\Vert _{L^{2}(  0,T;H^{1}(  0,L)
)  }+\left\Vert v_{x}(  \cdot,L)  \right\Vert _{L^{2}(
0,T)  }\leq C(  \left\Vert \xi\right\Vert _{Y_{1/4}})
\left\Vert f\right\Vert _{L^{1}(  0,T;L^{2}(  0,L)  )
}\text{.} \label{12}%
\end{equation}
\end{proposition}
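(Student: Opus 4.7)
The plan is to separate the principal third-order part of (\ref{mod}) from the drift perturbation $\xi v_x$, obtain the two bounds first for $\xi\equiv 0$ via transposition from Proposition \ref{linkdv}, and then close by a short-time iteration absorbing the drift.

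For the unperturbed case $\xi\equiv 0$, the time-space reversal $w(t,x):=v(T-t,L-x)$ turns (\ref{mod}) into a forward problem $w_t+w_{xxx}=\tilde f$ with zero initial datum and the same boundary conditions as in Proposition \ref{linkdv}. Estimate (\ref{12}) then follows from Duhamel's formula applied to the $C_0$-semigroup of contractions, together with the Kato-type smoothing already contained in (\ref{4}). Estimate (\ref{11}) for $f\in L^2(0,T;H^{-1})$ is obtained by transposition: for a test function $\varphi\in L^2(0,T;H^1_0(0,L))$, solving the direct problem $z_t+z_{xxx}=\varphi$ with $z(0,\cdot)=0$ and integrating by parts defines $v$ by duality, and the regularity (\ref{4}) of $z$ (including its hidden trace $z_x(\cdot,0)\in L^2(0,T)$) produces $v\in Y_{1/4}$ with $v_x(\cdot,L)\in L^2(0,T)$ satisfying the stated bound. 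The continuity $v\in C^0([0,T];L^2(0,L))$ is then a consequence of the interpolation identities (\ref{14_3})-(\ref{14_4}).

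To absorb the drift, I would write $\xi v_x=(\xi v)_x-\xi_x v$ and use the one-dimensional embeddings $H^1\hookrightarrow L^\infty$ and $L^1(0,L)\hookrightarrow H^{-1}(0,L)$ to deduce
\[
\|\xi v_x\|_{L^2(0,T;H^{-1}(0,L))}\le C\|\xi\|_{Y_{1/4}}\|v\|_{Y_{1/4}},
\]
together with an analogous bound in $\tilde X_{1/4}$. Applying the unperturbed estimate to (\ref{mod}) with source $f+\xi v_x$ then yields
\[
\|v\|_{Y_{1/4}}+\|v_x(\cdot,L)\|_{L^2(0,T)}\le C_0\|f\|_*+C_0C\|\xi\|_{Y_{1/4}}\|v\|_{Y_{1/4}},
\]
where $\|\cdot\|_*$ denotes the $X_{1/4}$- or $\tilde X_{1/4}$-norm of $f$. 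On any sub-interval $[\tau_0,\tau_1]\subset[0,T]$ short enough that $C_0C\|\xi\|_{L^2(\tau_0,\tau_1;H^1)}\le 1/2$, the $v$-term on the right is absorbed; since $\|\xi\|_{L^2(a,b;H^1)}\to 0$ as $b-a\to 0$, a finite backward iteration over such sub-intervals — feeding the endpoint $v(\tau_0,\cdot)$ as the terminal datum of the next sub-interval — produces the global estimate with a constant depending only on $\|\xi\|_{Y_{1/4}}$. The main obstacle is the $L^2(H^{-1})$ case in (\ref{11}): $v$ is not a mild solution in the classical sense, and the identity $\xi v_x=(\xi v)_x-\xi_x v$ must be read in the distributional formulation used in the transposition argument — something that is justified a posteriori once the iteration has shown $v\in L^2(0,T;H^1(0,L))$.
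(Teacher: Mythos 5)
The paper itself does not reprove this proposition: it is imported from Glass--Guerrero, where the bound $X_{1/4}\to Y_{1/4}$ is obtained by complex interpolation between the endpoint estimates $X_0\to Y_0$ and $X_1\to Y_1$ (this is exactly why the scales $X_\theta,\tilde X_\theta,Y_\theta$ are set up in Section~2). Your architecture is genuinely different: unperturbed estimates first, then absorption of the drift. The absorption step is correctly designed --- writing $\xi v_x=(\xi v)_x-\xi_x v$ puts the whole perturbation in $L^2(\tau_0,\tau_1;H^{-1})$ with coefficient $\Vert\xi\Vert_{L^2(\tau_0,\tau_1;H^1)}$ multiplying only $\Vert v\Vert_{L^\infty(\tau_0,\tau_1;L^2)}$, and that coefficient does shrink with the interval, whereas the naive bound $\Vert\xi v_x\Vert_{L^2(H^{-1})}\le \Vert\xi\Vert_{L^\infty(L^2)}\Vert v\Vert_{L^2(H^1)}$ could not be absorbed. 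The finite backward iteration then gives a constant depending only on $\Vert\xi\Vert_{Y_{1/4}}$.

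The gap is in the unperturbed case with $f\in L^2(0,T;H^{-1}(0,L))$, which is the estimate your whole iteration feeds on. In the transposition you describe, pairing $v$ against the solution $z$ of the direct problem with source $\varphi$ makes all boundary terms vanish and yields $\int_0^T\!\!\int_0^L v\varphi\,dx\,dt=\langle f,z\rangle_{L^2(H^{-1}),L^2(H^1_0)}\le \Vert f\Vert_{L^2(H^{-1})}\Vert z\Vert_{L^2(H^1_0)}$. Since \eqref{4} controls $\Vert z\Vert_{L^2(H^1)}$ only in terms of $\Vert\varphi\Vert_{L^2(L^2)}$, this identifies $v$ merely as an element of $L^2(0,T;L^2(0,L))$ --- one full spatial derivative short of $Y_{1/4}$ --- and gives no access at all to the hidden trace $v_x(\cdot,L)$: the trace $z_x(\cdot,0)$ of the test solution never enters the duality identity, so it cannot ``produce'' anything about $v$. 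Duality against more regular sources yields less regular conclusions, not more; likewise, the identities \eqref{14_3}--\eqref{14_4} identify interpolation spaces but do not by themselves give $v\in C^0([0,T];L^2)$. To close this step you must either interpolate the solution operator between the endpoint estimates as in GG, or argue directly: decompose $f=g_0+\partial_x g_1$ with $g_0,g_1\in L^2(0,T;L^2(0,L))$, handle $g_0$ by Duhamel, and for $\partial_x g_1$ run the multiplier computations with $u$ and $xu$ on the reversed problem, absorbing $\langle\partial_x g_1,xu\rangle=-\int g_1(u+xu_x)$ into the dissipation $\tfrac32\int\!\!\int u_x^2$ generated by the weighted multiplier; this recovers $L^\infty(L^2)\cap L^2(H^1)$ and the trace. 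With that step supplied, the remainder of your argument closes.
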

More can be said when $\xi\equiv 0$. Consider the following system%
\begin{equation}
\left\{
\begin{array}
[c]{lll}%
-v_{t}-v_{xxx}=g &  & \text{in }(  0,T)  \times(  0,L)
\text{,}\\
v(  t,0)  =v(  t,L)  =v_{x}(  t,0)  =0 &  &
\text{in }(  0,T)  \text{,}\\
v(  T,x)  =0 &  & \text{in }(  0,L)  .
\end{array}
\right.  \label{15}%
\end{equation}

\begin{proposition}  
(\cite[Section 2.3.1]{GG}.
\label{est2}
If $g\in X_{1}\cup\tilde{X}_{1}$, then $v$ $\in Y_{1}$ and there exists a
constant $C>0$ such that%
\begin{equation}
\left\Vert v\right\Vert _{Y_{1}}+\left\Vert v_{x}(  \cdot,L)
\right\Vert _{H^{1}(  0,T)  }\leq C
\left\Vert g\right\Vert _{X_{1}} \label{16}%
\end{equation}
and%
\begin{equation}
\left\Vert v\right\Vert _{Y_{1}}+\left\Vert v_{x}(  \cdot,L)
\right\Vert _{H^{1}(  0,T)  }\leq C\left\Vert g\right\Vert
_{\tilde{X}_{1}}\text{.} \label{17}%
\end{equation}
\end{proposition}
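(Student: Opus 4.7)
The plan is to build estimates \eqref{16}--\eqref{17} directly on top of the $C_0$-semigroup bounds of Proposition \ref{linkdv}, exploiting the fact that with $\xi\equiv 0$ the analysis collapses to a clean semigroup/regularity argument for the pure Airy operator. First, via the change of variable $w(t,y):=v(T-t,L-y)$, $\tilde g(t,y):=g(T-t,L-y)$, system \eqref{15} becomes the forward problem
\begin{equation*}
w_t+w_{yyy}=\tilde g, \quad w(t,0)=w(t,L)=w_y(t,L)=0, \quad w(0,\cdot)=0.
\end{equation*}
The boundary conditions coincide with those of the domain ${\mathcal D}(A)$ of Proposition \ref{linkdv} (the missing first-order term $\partial_x$ is a lower-order perturbation and does not affect semigroup generation or energy bounds), and the $X_1$- and $\tilde X_1$-norms of $\tilde g$ equal those of $g$. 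Hence it suffices to bound $w(t)=\int_0^t S(t-s)\tilde g(s)\,ds$, where $S(t)$ is the $C_0$-semigroup generated by the Airy operator $A_0:=-\partial_x^3$ on $L^2(0,L)$ with the same domain.

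The $\tilde X_1$-case is the cleanest. Any $h\in H^3\cap H^2_0(0,L)$ satisfies $h(0)=h(L)=h'(0)=h'(L)=0$, and hence $h\in{\mathcal D}(A_0)$. Since $A_0$ commutes with $S(t)$ on its domain, one has $A_0 w(t)=\int_0^t S(t-s)A_0\tilde g(s)\,ds$. Applying the analogue of \eqref{4} for $A_0$ slice by slice yields
\begin{equation*}
\|A_0w\|_{L^\infty(0,T;L^2)\cap L^2(0,T;H^1)} \le C\|A_0\tilde g\|_{L^1(0,T;L^2)} \le C\|\tilde g\|_{\tilde X_1}.
\end{equation*}
The homogeneous boundary conditions on $w$ together with $A_0 w=-w_{yyy}\in L^2(0,T;H^1)$ upgrade $w$ to $L^2(0,T;H^4)\cap C^0([0,T];H^3)=Y_1$ by elliptic regularity, which gives \eqref{17} and, via $w\in C^0H^3$, the trace bound $\|v_x(\cdot,L)\|_{H^1(0,T)}$.

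For the $X_1$-case, the plan is to run a higher-order energy estimate on $z:=w_{yy}$, which formally satisfies $z_t+z_{yyy}=\tilde g_{yy}\in L^2((0,T)\times(0,L))$. The correct homogeneous boundary conditions on $z$ are obtained by combining $w(t,0)=w(t,L)=w_y(t,L)=0$ with the traces $\tilde g(t,0)=\tilde g(t,L)=\tilde g_y(t,0)=\tilde g_y(t,L)=0$ inherited from $\tilde g\in H^2_0$, together with the trace relations extracted from the equation at $y=0,L$. Once those boundary conditions are pinned down, the $X_{1/2}\to Y_{1/2}$ regularity for the Airy equation with $L^2 L^2$ forcing (an intermediate bound proved in the spirit of \eqref{4} by successive multipliers by $z$, $yz$, and $z_{yy}$) applied to $z$ yields $z\in L^2(0,T;H^2)\cap C^0([0,T];H^1)$ with norm controlled by $\|\tilde g\|_{X_1}$, and hence $w\in Y_1$, establishing \eqref{16}.

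The main obstacle is the rigorous justification of the differentiation step producing $z$: one must verify that the formal boundary conditions on $z$ are realised in a strong enough sense for the Airy problem with $L^2L^2$ forcing to be well posed under them. The standard remedy is to approximate $\tilde g\in X_1$ by smoother data lying in $\tilde X_1$ (for which classical solutions exist by the first case and all traces are pointwise attained), derive a uniform $Y_1$ bound from the differentiated energy identity, and pass to the limit.
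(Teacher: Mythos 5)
The paper offers no proof of Proposition \ref{est2}: it is quoted verbatim from \cite[Section 2.3.1]{GG}, so your argument has to stand on its own. Your treatment of the $\tilde X_1$ case is essentially sound: $H^3\cap H^2_0\subset {\mathcal D}(A_0)$, so $W:=A_0w$ solves the same homogeneous boundary-value problem with forcing $A_0\tilde g\in L^1(0,T;L^2)$, and the $L^1L^2\to C^0L^2\cap L^2H^1$ bound plus one-dimensional elliptic regularity gives $w\in Y_1$. But your justification of the trace bound is wrong as stated: $w\in C^0([0,T];H^3)$ only gives $w_y(\cdot,0)\in C^0([0,T])$, whereas $\|v_x(\cdot,L)\|_{H^1(0,T)}$ requires $\partial_t[w_y(\cdot,0)]\in L^2(0,T)$. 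This does \emph{not} follow from membership in $Y_1$; it follows from applying the hidden-regularity estimate \eqref{4} to $W=A_0w$ (which gives $W_y(\cdot,0)\in L^2(0,T)$) together with the identity $\partial_t w_y(t,0)=\partial_y(-w_{yyy}+\tilde g)(t,0)=W_y(t,0)$, using $\tilde g_y(t,0)=0$. That is a fixable omission.

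The $X_1$ case, however, has a genuine gap that your closing paragraph misdiagnoses as a mere approximation issue. If you actually extract the boundary conditions satisfied by $z:=w_{yy}$ from $w(t,0)=w(t,L)=w_y(t,L)=0$, the vanishing traces of $\tilde g\in H^2_0$, and the equation at the endpoints, you get $z_y(t,0)=z_y(t,L)=z_{yy}(t,L)=0$, while $z(t,0)=w_{yy}(t,0)$, $z(t,L)=w_{yy}(t,L)$ and $z_{yy}(t,0)=w_{yyyy}(t,0)$ remain free. These are \emph{not} the boundary conditions of \eqref{15} (nor of its forward reflection), so the $X_{1/2}\to Y_{1/2}$ estimate of Proposition \ref{est3} simply does not apply to $z$. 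Worse, the basic energy identity fails to close under the conditions $z$ actually satisfies: multiplying $z_t+z_{yyy}=\tilde g_{yy}$ by $z$ gives $\int_0^L z_{yyy}z\,dy=\bigl[z_{yy}z-\tfrac12 z_y^2\bigr]_0^L=-z_{yy}(t,0)z(t,0)$, a sign-indefinite boundary term involving exactly the traces you do not control; the multipliers $yz$ and $z_{yy}$ generate further uncontrolled traces of the same kind. Smoothing the data and passing to the limit legitimizes the formal computation but does nothing to remove this term, so the proposed differentiated energy estimate cannot produce a uniform $Y_1$ bound. (There is also a structural concern: in \cite{GG} the fractional cases such as $X_{1/2}\to Y_{1/2}$ are obtained by interpolating between the integer cases, so using Proposition \ref{est3} as an ingredient in the proof of Proposition \ref{est2} would require an independent proof of the former.) To repair the $X_1$ case you need a different mechanism — e.g.\ a duality/transposition argument, or a decomposition of $v_{xx}$ as the sum of a solution with homogeneous boundary data and a boundary-value-problem correction whose $L^2(0,T)$ (indeed $H^{1/3}(0,T)$) traces are controlled by the hidden-regularity estimates — not the naive application of the interior regularity result to $w_{yy}$.
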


\begin{proposition}
(\cite[Section 2.3.2]{GG}.
\label{est3} If $g\in X_{1/2} \cup \tilde{X}_{1/2}$, then $v\in Y_{1/2}$, and there exists some constant $C>0$ such that %
\begin{equation}
\left\Vert v\right\Vert _{Y_{1/2}}+\left\Vert v_{x}(  \cdot,L)
\right\Vert _{H^{1/3}(  0,T)  }
 + \left\Vert v_{xx}(  \cdot,0)  \right\Vert _{L^{2}(  0,T)  }+\left\Vert v_{xx}(  \cdot,L)
\right\Vert _{L^{2}(  0,T)  }
\leq C\left\Vert g\right\Vert _{X_{1/2}} \label{18}%
\end{equation}
and%
\begin{equation}
\left\Vert v\right\Vert _{Y_{1/2}}+\left\Vert v_{x}(  \cdot,L)
\right\Vert _{H^{1/3}(  0,T)  }
 + \left\Vert v_{xx}(  \cdot,0)  \right\Vert
_{L^{2}(  0,T)  }+\left\Vert v_{xx}(  \cdot,L)
\right\Vert _{L^{2}(  0,T)  }
\leq C\left\Vert g\right\Vert
_{\tilde{X}_{1/2}}\text{.} \label{19}%
\end{equation}
\end{proposition}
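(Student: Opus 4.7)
Proposition \ref{est3} is essentially an interpolation statement: the data and solution spaces $X_{1/2}$, $\tilde X_{1/2}$, $Y_{1/2}$ are all obtained by complex interpolation at $\theta=1/2$ between the corresponding pairs $(X_0,X_1)$, $(\tilde X_0,\tilde X_1)$, $(Y_0,Y_1)$. I would therefore deduce \eqref{18}--\eqref{19} by combining Proposition \ref{est2} (the $\theta=1$ endpoint) with a companion estimate at the $\theta=0$ endpoint, and then invoke the complex interpolation theorem for the linear solution map $g\mapsto v$ associated with \eqref{15}.

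The $\theta=0$ endpoint to be established says: if $g\in X_0\cup\tilde X_0 = L^2(0,T;H^{-2}(0,L))\cup L^1(0,T;H^{-1}(0,L))$, then \eqref{15} admits a unique (transposition) solution in $Y_0 = L^2((0,T)\times(0,L))\cap C^0([0,T];H^{-1}(0,L))$, with a norm estimate. I would obtain this via the time--space reflection $w(t,x):=v(T-t,L-x)$, which turns \eqref{15} into a forward problem for $w_t+w_{xxx}=\tilde g$ with the usual KdV boundary conditions ($w(t,0)=w(t,L)=w_x(t,L)=0$) and zero initial datum, and then by a duality/transposition argument against the smoother solution map produced by Proposition \ref{linkdv} (after handling the missing $w_x$ term, which is a bounded perturbation). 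Together with Proposition \ref{est2}, complex interpolation at $\theta=1/2$ immediately yields the interior part of \eqref{18}--\eqref{19}.

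For the boundary traces I would proceed as follows. The $L^2$ traces $v_{xx}(\cdot,0)$ and $v_{xx}(\cdot,L)$ are extracted by multiplying \eqref{15} by $(L-x)v_x$ and $xv_x$ respectively and integrating in $(0,T)\times(0,L)$: using the boundary conditions $v(t,0)=v(t,L)=v_x(t,0)=0$ and the interior $L^2(0,T;H^2)$ bound from the previous step, integration by parts produces exactly these trace quantities, controlled by $\|g\|_{X_{1/2}}$ or $\|g\|_{\tilde X_{1/2}}$. The sharp trace $v_x(\cdot,L)\in H^{1/3}(0,T)$ requires more care; I would again interpolate between $H^1(0,T)$ at $\theta=1$ (from Proposition \ref{est2}) and an $H^{-1/3}(0,T)$ bound at $\theta=0$, using the identity $(H^1,H^{-1/3})_{[1/2]}=H^{1/3}$.

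\textbf{Main difficulty.} The delicate point is precisely this $H^{-1/3}(0,T)$ endpoint trace. It is the Kato smoothing manifestation of the third-order dispersion, relating the symbol $i\tau+i\xi^3$ to a $1/3$-gain of regularity in time, and it is not produced by the energy/multiplier identities used for the $L^2$ traces of $v_{xx}$. Its proof requires either a Fourier-multiplier analysis on the half-line (following the lines of the original Kato smoothing) or an equivalent careful tangential-multiplier argument near $x=L$, and this is the single nonroutine ingredient of the proposition; once it is available, the rest follows by the interpolation scheme above and standard multiplier computations.
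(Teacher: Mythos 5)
First, a point of comparison: the paper does not prove Proposition \ref{est3} at all --- it is quoted verbatim from \cite[Section 2.3.2]{GG}, so the only ``proof'' in the paper is the citation. Your reconstruction of the argument is essentially the one carried out in that reference: establish the two endpoint estimates ($\theta=0$ by transposition/duality from the smooth case, $\theta=1$ as in Proposition \ref{est2}), and apply complex interpolation at $\theta=1/2$ to the linear solution map of \eqref{15}. You also correctly identify the genuinely nonroutine ingredient, namely the sharp (hidden-regularity) trace $v_x(\cdot,L)\in H^{-1/3}(0,T)$ at the $\theta=0$ endpoint, which is a Fourier/Kato-smoothing fact and not an energy identity. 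On the interior part of \eqref{18}--\eqref{19} your plan is sound.

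There is, however, a concrete flaw in your treatment of the traces $v_{xx}(\cdot,0)$ and $v_{xx}(\cdot,L)$. Multiplying \eqref{15} by $q(x)v_x$ with $q=x$ or $q=L-x$ and integrating by parts gives
\[
\int_0^L v_{xxx}\,q v_x\,dx=\bigl[q v_x v_{xx}\bigr]_0^L-\tfrac12\bigl[q' v_x^2\bigr]_0^L+\tfrac12\int_0^L q'' v_x^2\,dx-\int_0^L q\,|v_{xx}|^2\,dx,
\]
so the quantity $\int_0^T\!\!\int_0^L q\,|v_{xx}|^2\,dx\,dt$ appears as an \emph{interior} term (this is the multiplier that yields the $L^2(0,T;H^2)$ gain), while the boundary contributions involve only $v_xv_{xx}$ and $v_x^2$, never $|v_{xx}|^2$. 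Hence this computation does not ``produce exactly these trace quantities,'' and the step as written fails. To extract $\|v_{xx}(\cdot,0)\|_{L^2(0,T)}$ and $\|v_{xx}(\cdot,L)\|_{L^2(0,T)}$ you must either apply a first-order energy identity to $p=v_x$ (which satisfies a KdV-type equation with forcing $-g_x$ but with inhomogeneous boundary data at $x=L$, producing cross terms such as $v_{xxx}(L)v_x(L)$ that need separate control), or --- more consistently with the rest of your scheme --- interpolate the full trace map $g\mapsto\bigl(v,\,v_x(\cdot,L),\,v_{xx}(\cdot,0),\,v_{xx}(\cdot,L)\bigr)$ between the two endpoints. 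The latter requires endpoint trace bounds for $v_{xx}$ as well (roughly $H^{2/3}(0,T)$ at $\theta=1$ and $H^{-2/3}(0,T)$ at $\theta=0$), and the negative-order one at $\theta=0$ is again a sharp-trace estimate of the same nature as the $H^{-1/3}$ bound for $v_x(\cdot,L)$; neither is contained in Proposition \ref{est2} nor in the $\theta=0$ companion as you describe it, so they must be added to the list of endpoint ingredients before the interpolation closes the argument.
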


\section{Null controllability results\label{Sec2}}
This section is devoted to the proof of Theorems \ref{nullsysnon}  and \ref{thmC}.  
\subsection{Null controllability of a linearized equation}

We first consider the system
\begin{equation}
\left\{
\begin{array}
[c]{lll}%
u_{t}+(  \xi u)  _{x}+u_{xxx}=1_{\omega} f (  t,x)  &  &
\text{in }(  0,T)  \times(  0,L)  \text{,}\\
u(t,0)  =u( t,L)  =u_{x}(t,L)  =0 &  &
\text{in }(  0,T)  \text{,}\\
u(  0,x)  =u_{0}(  x)  &  & \text{in }(
0,L),%
\end{array}
\right.  \label{nc1}%
\end{equation}
where $\xi =\xi (t,x)$ is a given function in $Y_{\frac{1}{4} } $, 
and $\omega=(  l_{1},l_{2}) \subset(  0,L)  $. Our aim is to prove the null controllability 
of \eqref{nc1}. To this end, we shall establish an observability inequality
for the corresponding adjoint system%
\begin{equation}
\left\{
\begin{array}
[c]{lll}%
-v_{t}-\xi(  t,x)  v_{x}-v_{xxx}=0 &  & \text{in }(
0,T)  \times(  0,L)  \text{,}\\
v(  t,0)  =v(  t,L)  =v_{x}(  t,0)  =0 &  &
\text{in }(  0,T)  \text{,}\\
v(  T,x)  =v_{T}(  x)  &  & \text{in }(
0,L) %
\end{array}
\right.  \label{nc2}%
\end{equation}
by using some Carleman inequality.

\subsubsection{Carleman inequality with internal observation}
Assume that $\omega =(l_1,l_2)$ with 
\[
0<l_1<l_2<L.
\]
Pick any function $\psi \in C^3([0,L])$ with 
\ba
&&\psi >0\textrm{ in } [0,L]; \label{C1}\\
&&|\psi '|>0, \ \psi ''<0,\ \textrm{ and } \psi '\psi ''' <0 \textrm{ in } [0,L]\setminus \omega ; \label{C2}\\
&&\psi '(0)<0 \textrm{ and } \psi '(L) >0; \label{C3}\\
&&\min_ {x\in [l_1,l_2]} \psi (x) =\psi (l_3)<\max_{x\in [l_1,l_2]}\psi (x) =\psi (l_1)=\psi (l_2),\quad \max_{x\in [0,L]}\psi (x)= \psi (0)=\psi (L)  \label{C4}\\
&&\psi (0)<\frac{4}{3} \psi (l_3),\label{C5}
\ea
for some $l_3\in (l_1,l_2)$. 
A convenient function $\psi$ is defined on $[0,L] \setminus \omega$ as
\[
\psi(x) =\left\{
\begin{array}{ll}
\varepsilon x^3-x^2-x+c_1&\textrm{ if } x\in [0,l_1],\\
-\varepsilon x^3 +ax +c_2&\textrm{ if } x\in [l_2,L]
\end{array} 
\right.
\]
with $\varepsilon,a,c_1,c_2>0$ conveniently chosen. Note first that $\psi (l_1)=\psi (l_2)$ and $\psi (0)=\psi (L)$ if, and only if, 
\[
a=(L-l_2)^{-1} (l_1^2+l_1-\varepsilon l_2^3-\varepsilon l_1^3 + \varepsilon L^3), \qquad c_1=c_2 -\varepsilon L^3 + aL.
\] 
Then $a>0$, $c_1-c_2>0$ and \eqref{C2}-\eqref{C3} hold provided that $0<\varepsilon \ll1$.  \eqref{C1} and \eqref{C5} hold for 
$c_2\gg1$. \eqref{C4} is easy to satisfy.

Set
\be
\label{CC2}
\vf (t,x) =\frac{\psi (x) }{t(T-t)} \cdot
\ee
For $f\in L^2(0,T; L^2(0,L))$ and $q_0\in L^2(0,L)$, let $q$ denote the solution of the system
\ba
q_t + q_{xxx} =f,&& t\in (0,T),\  x\in (0,L) , \label{A1}\\
q(t,0)=q(t,L)=q_x(t,L)=0,&&  t\in (0,T), \label{A2} \\
q(0,x) =q_0(x), &&  x\in (0,L). \label{A3}
\ea
Then the following Carleman inequality holds.
\begin{proposition}
\label{prop10} Pick any $T>0$. 
There exist two constants $C>0$ and $s_0 >0$ such that any $f\in L^2(0,T;L^2(0,L))$, any $q_0\in L^2(0,L)$ and any 
$s\ge s_0$,
the solution $q$ of
\eqref{A1}-\eqref{A3} fulfills
\begin{multline}
\int_0^T\!\! \int_0^L [s\vf |q_{xx}|^2 +(s\vf )^3 |q_x|^2 +(s\vf )^5 |q|^2 ]e^{-2s\vf} dx dt
+\int_0^T  [ (s\vf  |q_{xx}|^2 +(s\vf)^3 |q_{x}|^2)e^{-2s\vf }] _{\vert x=0} +  [s\vf  |q_{xx}|^2 e^{-2s\vf } ] _{\vert x=L} dt  \\
\le
C\left( \int_0^T\!\!\int_0^L |f|^2 e^{-2s\vf} dxdt + \int_0^T\!\! \int_{\omega}[s\vf |q_{xx}|^2 +(s\vf )^3 |q_x|^2 +(s\vf )^5 |q|^2 ]e^{-2s\vf} dx dt \right)
\label{C7}
\end{multline}
\end{proposition}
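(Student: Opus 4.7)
The plan is to follow the standard Carleman machinery for the operator $P = \partial_t + \partial_x^3$. Performing the conjugation $w = e^{-s\varphi}q$ and expanding $e^{-s\varphi}P(e^{s\varphi}w)$ gives
\[
e^{-s\varphi}f = w_t + w_{xxx} + s\varphi_t w + 3s\varphi_x w_{xx} + 3s\varphi_{xx}w_x + 3s^2\varphi_x^2 w_x + s\varphi_{xxx}w + 3s^2\varphi_x\varphi_{xx}w + s^3\varphi_x^3 w.
\]
I would split this as $Mw + Nw = e^{-s\varphi}f - Rw$, where $Mw$ collects the \emph{symmetric} pieces ($3s\varphi_x w_{xx}$, $s\varphi_t w$, $s^3\varphi_x^3 w$), $Nw$ collects the \emph{antisymmetric} pieces ($w_t + w_{xxx} + 3s\varphi_{xx}w_x + 3s^2\varphi_x^2 w_x$), and $Rw$ gathers the remaining lower-order terms ($s\varphi_{xxx}w + 3s^2\varphi_x\varphi_{xx}w$) to be absorbed later.

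Next, I expand $\Vert Mw + Nw\Vert^2_{L^2((0,T)\times(0,L))} = \Vert Mw\Vert^2 + \Vert Nw\Vert^2 + 2\langle Mw, Nw\rangle$ and compute the cross term through systematic integration by parts in $t$ and $x$. Time-boundary contributions vanish because $\varphi \to +\infty$ as $t\to 0^+$ or $t\to T^-$, forcing $w$ and all its spatial derivatives to vanish there in a trace sense. Each cross product, such as $\langle 3s\varphi_x w_{xx}, w_{xxx}\rangle$ or $\langle s^3\varphi_x^3 w, 3s^2\varphi_x^2 w_x\rangle$, produces after integration by parts a bulk contribution of the form $(-\psi'')$ times a positive power of $\varphi$ and the square of some derivative of $w$, plus boundary traces at $x=0,L$. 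The homogeneous conditions $q(t,0)=q(t,L)=q_x(t,L)=0$ eliminate the traces containing $q$ or $q_x$ at those endpoints, and what remains at $x=0$ and $x=L$ acquires the correct positive sign thanks to $\psi'(0)<0$ and $\psi'(L)>0$, producing the $\int_0^T[s\varphi|q_{xx}|^2 + (s\varphi)^3|q_x|^2]e^{-2s\varphi}|_{x=0}\,dt$ and $\int_0^T[s\varphi|q_{xx}|^2 e^{-2s\varphi}]|_{x=L}\,dt$ terms on the left of \eqref{C7}.

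On $[0,L]\setminus\omega$, conditions \eqref{C2}--\eqref{C3} make the bulk contributions $-s\varphi_{xx}|w_{xx}|^2$, $-c\,s^3\varphi_x^2\varphi_{xx}|w_x|^2$ and $-c'\,s^5\varphi_x^4\varphi_{xx}|w|^2$ pointwise positive, which yield the interior terms on the left of \eqref{C7} there. Inside $\omega$ the weight has by design the ``wrong shape'' and these bulk terms lose positivity; they are then transferred to the right-hand side and absorbed by the observation integral, condition \eqref{C5} on the ratio between $\psi(0)=\psi(L)$ and $\psi(l_3)$ ensuring that the exponential weights in $\omega$ and at the boundary remain compatible enough not to spoil the estimate. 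The remainder $Rw$ and miscellaneous lower-order contributions scale with strictly smaller powers of $s$ or $\varphi$ than the dominant pieces and are swallowed by taking $s\ge s_0$ sufficiently large. Finally, one reverts from $w$ to $q$ via $w_x = (q_x - s\varphi_x q)e^{-s\varphi}$ and $w_{xx} = (q_{xx} - 2s\varphi_x q_x + (s^2\varphi_x^2 - s\varphi_{xx})q)e^{-s\varphi}$, the extra cross terms produced by these substitutions being once again absorbed for $s$ large. I expect the main obstacle to be the sheer bookkeeping of the cross term $\langle Mw, Nw\rangle$ together with the careful treatment of the ``bad'' region $\omega$, where positivity must be recovered from the observation rather than from the weight.
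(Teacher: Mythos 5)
Your proposal follows essentially the same route as the paper: conjugate with $e^{-s\varphi}$, split the conjugated operator into (nearly) self-adjoint and skew-adjoint parts plus an absorbable remainder, expand the square, compute the cross term by integration by parts, and use \eqref{C2}--\eqref{C3} for the interior positivity off $\omega$ and for the signs of the boundary traces at $x=0$ and $x=L$. The only inessential deviations are your slightly different grouping of the lower-order terms and your invocation of \eqref{C5}, which is not actually needed at this stage (it only enters later, in the proof of Lemma \ref{newcarleman}).
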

Actually, we shall need a Carleman estimate for \eqref{nc2} with the potential $\xi \in Y_{\frac{1}{4}}$. Let
\[
\tilde \varphi (t,x) = \varphi (t,L-x). 
\]
\begin{corollary}
\label{cor11}
Let $\xi \in Y_{\frac{1}{4}}$. Then there exist some positive constants $\tilde s_0= \tilde s_0(T, ||\xi ||_{Y_\frac{1}{4}} )$ and
$C=C(T,||\xi ||_{Y_{ \frac{1}{4} } }  )$ such that for all $s\ge \tilde s_0$ and  all $v_T\in L^2(0,L)$, the solution $v$ of  \eqref{nc2} fulfills
\begin{multline}
\int_0^T\!\! \int_0^L [s\tilde \vf | v_{xx}|^2 +(s\tilde \vf )^3 |v_{x}|^2 +(s\tilde \vf )^5 |v|^2 ]e^{-2s\tilde \vf} dxdt  \\
\le C  \int_0^T\!\! \int _{\omega}  [s\tilde \vf | v_{xx}|^2 +(s\tilde \vf )^3 |v_{x}|^2 +(s\tilde \vf )^5 |v|^2 ]e^{-2s\tilde \vf} dxdt  .
\label{D60}
\end{multline}
\end{corollary}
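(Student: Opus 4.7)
The plan is to reduce Corollary \ref{cor11} to Proposition \ref{prop10} via a simultaneous reflection in space and time, which converts the backward adjoint system \eqref{nc2} into a forward KdV-type equation of the form treated in \eqref{A1}--\eqref{A3}, and then to treat the lower-order term $\xi v_x$ as an $L^2$-source that is absorbed into the left-hand side for $s$ sufficiently large.

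Concretely, I would set $\tilde v(t,x):=v(T-t,L-x)$, $\tilde\xi(t,x):=\xi(T-t,L-x)$ and $\tilde v_T(x):=v_T(L-x)$. A direct chain-rule computation shows that $\tilde v$ satisfies
\begin{equation*}
\tilde v_t+\tilde v_{xxx}=-\tilde\xi\,\tilde v_x \text{ in } (0,T)\times(0,L), \qquad \tilde v(t,0)=\tilde v(t,L)=\tilde v_x(t,L)=0,\qquad \tilde v(0,\cdot)=\tilde v_T,
\end{equation*}
whose boundary conditions match those of \eqref{A1}--\eqref{A3}. Moreover, $\tilde\psi(x):=\psi(L-x)$ inherits \eqref{C1}--\eqref{C5} with observation region $\tilde\omega:=(L-l_2,L-l_1)$, because $x\mapsto L-x$ reverses the signs of $\psi'$ and $\psi'''$ but preserves the signs of $\psi''$ and of $\psi'\psi'''$, and swaps the endpoints $0$ and $L$. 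Applying Proposition~\ref{prop10} to $\tilde v$ with weight $\Psi(t,x):=\tilde\psi(x)/[t(T-t)]$ and source $f:=-\tilde\xi\,\tilde v_x$ then produces, for all $s\ge s_0$,
\begin{equation*}
\int_0^T\!\!\int_0^L\bigl[s\Psi|\tilde v_{xx}|^2+(s\Psi)^3|\tilde v_x|^2+(s\Psi)^5|\tilde v|^2\bigr]e^{-2s\Psi}dxdt \le C\!\!\int_0^T\!\!\int_0^L|\tilde\xi\,\tilde v_x|^2e^{-2s\Psi}dxdt+C\!\!\int_0^T\!\!\int_{\tilde\omega}[\cdots]e^{-2s\Psi}dxdt.
\end{equation*}
The change of variables $x\mapsto L-x$ then turns $\tilde\omega$ back into $\omega$ and $\Psi(t,L-\cdot)$ into $\tilde\varphi$, so it remains only to absorb the potential term on the right.

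To do this, I would use that $\xi\in Y_{1/4}\hookrightarrow L^2(0,T;H^1(0,L))$ together with the one-dimensional embedding $H^1(0,L)\hookrightarrow L^\infty(0,L)$, so that $|\tilde\xi(t,x)|\le C\|\tilde\xi(t,\cdot)\|_{H^1(0,L)}$ and $t\mapsto\|\tilde\xi(t,\cdot)\|_{H^1(0,L)}$ belongs to $L^2(0,T)$. Combined with the pointwise bound $(s\Psi(t,x))^3\ge c\,s^3/[t(T-t)]^3$, this yields
\begin{equation*}
\int_0^T\!\!\int_0^L|\tilde\xi|^2|\tilde v_x|^2 e^{-2s\Psi}dxdt \le \frac{C}{s^3}\int_0^T[t(T-t)]^3\|\tilde\xi(t,\cdot)\|_{H^1(0,L)}^2\!\int_0^L(s\Psi)^3|\tilde v_x|^2e^{-2s\Psi}dxdt.
\end{equation*}
I expect this absorption step to be the main obstacle: because $\|\tilde\xi(t,\cdot)\|_{L^\infty_x}^2$ only sits in $L^1(0,T)$ and not in $L^\infty(0,T)$, the naive pointwise comparison $|\tilde\xi|^2\le\tfrac12(s\Psi)^3$ is unavailable, and one must instead combine the weighted estimate above with either a Gronwall argument on the $t$-sections of the weighted energy or a careful splitting of $[0,T]$, exploiting the very rapid decay of $e^{-2s\Psi}$ as $t\to 0^+$ and $t\to T^-$, to produce a genuinely absorbable term for $s\ge\tilde s_0(T,\|\xi\|_{Y_{1/4}})$. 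Once the absorption is completed, the estimate first follows for $v_T\in D(A)$, where $\tilde\xi\tilde v_x$ is a legitimate $L^2(0,T;L^2(0,L))$-source fitting Proposition~\ref{prop10}, and is then extended to $v_T\in L^2(0,L)$ by density, giving \eqref{D60}.
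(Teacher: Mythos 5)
Your reduction is the same as the paper's: reflect in space and time via $q(t,x)=v(T-t,L-x)$, treat $-\xi(T-t,L-x)\,q_x$ as the source $f$ in Proposition \ref{prop10}, and absorb the resulting term into the left-hand side for $s$ large. The gap is exactly where you suspect it is, and the repairs you sketch would not close it. By putting the sup norm in $x$ on $\tilde\xi$ you are forced to integrate $\|\tilde\xi(t,\cdot)\|_{L^\infty(0,L)}^2$, which lies only in $L^1(0,T)$, against the $t$-sections of the weighted energy; but the Carleman estimate controls only the \emph{time integral} of those sections, not their supremum. A Gronwall argument is unavailable (you have a global space--time inequality, not a differential inequality), and splitting $[0,T]$ destroys the structure of the weight $1/[t(T-t)]$, which must blow up at both endpoints of the time interval for the Carleman machinery to apply. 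So as written the absorption step fails, and the proof is incomplete.

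The paper closes the estimate by distributing the H\"older exponents the other way. With $u=e^{-s\varphi}q$, the source contributes $w=-\xi(T-t,L-x)(u_x+s\varphi_x u)$, and one estimates, for each fixed $t$,
\begin{equation*}
\int_0^L |\xi|^2\bigl(|u_x|^2+|s\varphi_x u|^2\bigr)\,dx \;\le\; \|\xi(T-t,\cdot)\|^2_{L^2(0,L)}\Bigl(\|u_x\|^2_{L^\infty(0,L)}+\|s\varphi_xu\|^2_{L^\infty(0,L)}\Bigr),
\end{equation*}
then applies $H^1(0,L)\hookrightarrow L^\infty(0,L)$ to the $u$-factors and uses $\|\xi\|_{L^\infty(0,T;L^2(0,L))}\le\|\xi\|_{Y_{1/4}}$ (recall $Y_{1/4}\subset C^0([0,T];L^2(0,L))$). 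This gives
\begin{equation*}
\int_0^T\!\!\!\int_0^L|w|^2\,dxdt \;\le\; C\|\xi\|^2_{Y_{1/4}}\int_0^T\!\!\!\int_0^L\Bigl[u_x^2+u_{xx}^2+\frac{s^2}{t^2(T-t)^2}\bigl(u^2+u_x^2\bigr)\Bigr]dxdt,
\end{equation*}
and every term on the right carries a strictly smaller power of $s\varphi$ than the matching term $(s\varphi)^5u^2+(s\varphi)^3u_x^2+s\varphi\, u_{xx}^2$ on the left of \eqref{C40}; since $\varphi\ge 4\min_{[0,L]}\psi/T^2>0$, all of them are absorbed once $s\ge\tilde s_0(T,\|\xi\|_{Y_{1/4}})$. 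The essential point your version misses is that the Carleman left-hand side also controls $u_{xx}$, which is precisely what pays for the Sobolev embedding applied to $u_x$ instead of to $\xi$. Note also that the paper regularizes $\xi$ (assuming first $\xi\in Y_{1/4}\cap L^\infty(Q)$ so that $f\in L^2(Q)$, then passing to the limit), rather than regularizing $v_T$: smoothing $v_T$ alone does not obviously make $\xi v_x$ an $L^2$ source when $\xi$ is merely in $Y_{1/4}$. Your discussion of the reflected weight and of the observation region is correct; only the absorption needs to be redone along these lines.
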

\noindent 
{\em Proof of Proposition \ref{prop10}.} 
We first assume that $q_0\in D(A)$ and that $f\in C([0,T];D(A))$, so that $q\in C([0,T];D(A)) \cap C^1([0,T];L^2(0,L))$. This will be
sufficient to legitimate the following computations.  The general case ($q_0\in L^2(0,L)$ and $f\in L^2(0,T;L^2(0,L))$) follows by density.
Indeed, if we set 
\[
p(t,x):= \sqrt{\varphi (t,l_3)} e^{-s\varphi (t,l_3)} q(t,x)
\]
then $p$ solves \eqref{A1}-\eqref{A3} with $q_0$ replaced by $0$, and $f$ replaced by
\[
\tilde f = \sqrt{\varphi (t,l_3)} e^{-s\varphi (t,l_3)} f + 
\left( \frac{1}{2} \varphi _t (t,l_3) \varphi ^{-\frac{1}{2}} (t,l_3) -s\varphi _t (t,l_3) \sqrt{\varphi (t,l_3)} \right) e^{-s\varphi (t,l_3)}   q, 
\]  
so that (with different constants $C$)
\[
\int_0^T\!\!\!\int_0^L \varphi |q_{xx}|^2 e^{-2s\varphi} dxdt \le C||p||^2_{L^2(0,T,H^2(0,L))} \le C || \tilde f ||^2_{L^2(0,T,L^2(0,L))}
\le C \big( ||f||^2_{L^2(0,T,L^2(0,L))} + ||q_0||^2_{L^2(0,L)} \big).
\]
Since 
\[
||q||^2_{L^2(0,T,H^1(0,L))} \le C \big( || f ||^2_{L^2(0,T,L^2(0,L))} + ||q_0||^2_{L^2(0,L)} \big)
\] 
we conclude that we can pass to the limit in each term in \eqref{C7}, if we take a sequence $\{ (q_0^n,f^n)\} _{n\ge 0} $ 
in ${\mathcal D}(A)\times C([0,T],{\mathcal D}(A))$ such that
$q_0^n\to q_0$ in $L^2(0,L)$ and $f^n\to f$ in $L^2(0,T,L^2(0,L))$.
 
Assume from now on that $q_0\in {\mathcal D}(A)$ and that $f\in C([0,T];{\mathcal D}(A))$.
Let $q$ denote the solution of \eqref{A1}-\eqref{A3}, and let $u=e^{-s\vf }q$, $w=e^{-s\vf } L(e^{s\vf } u)$, where
\be
L= \partial _t + \partial _x ^3. \label{C8}
\ee
Straightforward computations show that
\begin{multline}
w=Mu:= u_t + u_{xxx} + 3s\varphi _x u_{xx} 
+ (3s^2 \varphi _x ^2  + 3s \varphi _{xx}) u_x
+ (s^3\varphi _x^3 +3s^2 \varphi _x\varphi _{xx} + s(\varphi _t + \varphi _{xxx}))u.
\label{B1}
\end{multline}
Let $M_1$ and $M_2$ denote the (formal) self-adjoint and skew-adjoint parts of the operator $M$.
We readily obtain that
\ba
M_1u &:=& 3s (\varphi _x u_{xx} + \varphi _{xx} u_x) +
[s(\varphi _t + \varphi _{xxx} ) + s^3 \varphi _x^3 ]u, \label{B2}\\
M_2u &:=& u_t + u_{xxx} + 3s^2( \varphi _x ^2 u_x + \varphi _x \varphi _{xx}u).\label{B3}
\ea
On the other hand
\be
||w||^2 = ||M_1u||^2 + ||M_2 u||^2 + 2(M_1u,M_2u)
\label{C12}
\ee
where $(u,v)=\int_0^T\!\!\!\int_0^L uv dxdt$ and $||w||^2=(w,w)$.  From now on, for the sake of simplicity, we write
$\ii u$ (resp. $\int u\big\vert _0^L$) instead of $\int_0^T\!\!\! \int_0^L u(t,x)dxdt$ (resp.
$\int_0^T u(t,x) \big\vert _{x=0}^L dt$). The proof of the Carleman inequality follows the same pattern as in \cite{MOR,RZ09}. The first step provides
an exact computation of the scalar product $(M_1u,M_2u)$, whereas the second step gives the estimates obtained thanks to the (pseudoconvexity) conditions
\eqref{C1}-\eqref{C5}. \\

\noindent
{\sc Step 1. Exact computation of the scalar product in \eqref{C12}.}\\
Write
\[
2(M_1u,M_2u) =
2\ii [s(\vf _t + \vf _{xxx}) + s^3 \vf _x^3 ]u M_2u + 2\ii 3s(\vf _x u_{xx} + \vf _{xx} u_x) M_2 u
=:I_1+I_2.
\]
Let
\be
\al := s(\varphi _t + \varphi _{xxx}) + s^3 \varphi _x^3.
\label{C14}
\ee
Using \eqref{B3}, we decompose $I_1$ into
\[
I_1 = \ii 2\al uu_t + \ii 2\al u u_{xxx} 
+ 3s^2 \ii 2\al u (\vf _x^2 u_x + \vf _x\vf _{xx} u) .
\]
Integrating by parts with respect to $t$ or $x$, noticing that $u_{\vert x=0}=u_{\vert x=L} ={u_x}_{\vert x=L} =0$, and that
$u_{\vert t=0} = u_{\vert t=T}=0$ by \eqref{C1}, we obtain that
\ba
I_1 &=& -\ii \al _t u^2 +( 3\ii \al _x u_x^2 -\ii \al _{xxx} u^2 -\int \al u_x^2\big\vert _0^L)
-3 s^2 \ii \vf _x^2 \al _x u^2 \nonumber\\
&=& -\ii (\al _t + \al _{xxx} +3s^2 \vf _x^2 \al _x)u^2 + 3 \ii \al _x u_x^2 
-\int\al u_x^2 \big\vert _0^L. \label{B11}
\ea
Next, we compute
\[
I_2 = 2\ii 3s (\vf _x u_{xx} + \vf _{xx} u_x)(u_t + u_{xxx} +3s^2(\vf _x^2 u_x + \vf _x\vf_{xx}u)).
\]
Performing integrations by parts, we obtain successively
\begin{eqnarray*}
&& 2\ii (\vf _x u_{xx} + \vf _{xx} u_x) u_t = \ii \vf _{xt} u_x ^2, \\
&&2\ii (\vf _xu_{xx} + \vf _{xx} u_x)u_{xxx} =-3 \ii \vf _{xx} u_{xx} ^2 + \ii \vf _{4x} u_x^2
+\int(\vf _x u_{xx}^2 -\vf _{3x}u_x^2  + 2\vf _{xx}u_{xx}u_x)\big\vert _0^L,
\end{eqnarray*}
and
\[
2\ii (\vf _xu_{xx} + \vf _{xx} u_x)(\vf _x^2 u_x + \vf _x\vf _{xx} u) =-3\ii \vf _x^2 \vf _{xx} u_x^2
+\ii [(\vf _x^2\vf_{xx})_{xx} -(\vf_x \vf_{xx}^2)_x] u^2  + \int \vf _x^3 u_x^2 \big\vert _0^L.
\]
Thus
\begin{multline}
I_2 = -9s \ii  \vf _{xx} u_{xx}^2+
\ii [-27 s^3 \vf _x ^2 \vf _{xx} + 3s (\vf _{xt} + \vf _{4x} )] u_x^2 \\
+ \ii 9s^3 [ (\vf _x^2\vf _{xx})_{xx} -(\vf _x\vf_{xx}^2)_x ]   u^2 +
\int [ 3s (\vf _x u_{xx}^2 -\vf _{3x}u_x^2  + 2\vf _{xx}u_x u_{xx} ) 
+9s^3 \vf _x ^3 u_x^2 ] \big\vert _0^L
\label{B12}
\end{multline}
Gathering together \eqref{B11}-\eqref{B12}, we infer that
\ba
2(M_1u,M_2u) &=&
\ii [-(\al _t +\al _{xxx} +3s^2 \vf _x^2 \al _x ) + 9s^3 ((\vf _x^2\vf _{xx})_{xx} -(\vf _x \vf _{xx}^2)_x) ]u^2 \nonumber \\
&&\quad  +\ii [3\al _x -27 s^3 \vf _x^2 \vf _{xx} +3s (\vf _{xt} +\vf _{4x}) ] u_x^2  -9s \ii \vf _{xx} u_{xx}^2  \nonumber \\
&&\quad +\int [ 3s \vf _x u _{xx}^2 + (9s^3 \vf _x^3 -3s \vf _{xxx} -\al ) u_x^2  + 2 \vf _{xx} u_x u_{xx} ]  \big\vert _0^L  
 \label{B13}
\ea

\noindent
{\sc Step 2. Estimation of each term in \eqref{B13}.}\\
The estimates are given in a series of claims. \\
{\sc Claim 1.}
There exist some constants $s_1>0$ and $C_1>1$ such that for all $s\ge s_1$, we have
\begin{multline*}
\ii [-(\al _t +\al _{xxx} +3s^2 \vf _x^2 \al _x ) + 9s^3 ((\vf _x^2\vf _{xx})_{xx} -(\vf _x \vf _{xx}^2)_x) ]u^2
\ge C_1^{-1} \ii (s\varphi )^5 u^2  - C_1 \int_0^T\!\!\!\int_\omega (s\varphi )^5 u^2.
\end{multline*}
From \eqref{C14}, we see that the term in $s^5$ in the brackets reads
\[
-3 s^5\vf _x^2(\vf _x ^3)_x = -9s^5 \vf _x ^4 \vf _{xx} = -9s^5 \frac{(\psi ')^4 \psi ''}{t^5(T-t)^5}\cdot
\]
We infer from \eqref{C2} that for some $\kappa _1>0$ and all $s>0$ 
\[
 -9s^5 \vf _x ^4 \vf _{xx}\ge \kappa _1 (s\varphi )^5\qquad (t,x)\in (0,T)\times  ( [0,L]\setminus \omega ).
\]
On the other hand, we have for some $\kappa _2>0$ and all $s>0$
\begin{eqnarray*}
|\al _t| + |\al _{xxx}| + |9s^3 ( (\vf _x ^2\vf _{xx})_{xx} - (\vf _x\vf _{xx}^2)_x)| &\le& \kappa _2 s^3\vf ^4 \quad (t,x)\in (0,T)\times (0,L),\\
|3s^2 \vf _x ^2\alpha _x| &\le& \kappa_2 (s\vf  )^5 \quad (t,x)\in (0,T)\times \omega . 
\end{eqnarray*}
Claim 1 follows then for all $s>s_1$ with $s_1$ large enough and some $C_1>1$. \\
{\sc Claim 2.}
There exist some  constants $s_2>0$ and $C_2>1$ such that for all $s\ge s_2$, we have
\be
\ii [3\al _x -27 s^3 \vf _x ^2 \vf _{xx} +3s (\vf _{xt} + \vf _{4x}) ]u_x^2 \ge C_2^{-1} \ii (s\vf ) ^3 u_x ^2
- C_2\int_0^T\!\!\!\int_\omega (s\vf ) ^3 u_x ^2.
\ee
Indeed, the term in $s^3$ in the brackets is found to be 
\[
-18 s^3 \vf _x ^2 \vf _{xx} \ge \kappa _3 (s\vf ) ^3 \quad (t,x)\in (0,T)\times ([0,L]\setminus\omega )
\]
for some $\kappa _3 >0$ and all $s>0$, by \eqref{C2}. On the other hand, we have for some $\kappa _4>0$ and all $s>0$
\begin{eqnarray*}
|6s(\vf _{tx} + \vf _{4x})| &\le& \kappa _4s\vf ^2\quad (t,x)\in (0,T)\times (0,L) , \\
|18s^3\vf _x^2\vf _{xx}| &\le& \kappa_4 (s\vf ) ^3\quad (t,x)\in (0,T)\times \omega .
\end{eqnarray*}
Claim 2 follows for all $s\ge s_2$ with $s_2$ large enough and some $C_2>1$.\\
{\sc Claim 3.}
There exist some constants $s_3>0$ and $C_3>1$ such that for all $s\ge s_3$, we have
\be
 -9s \ii \vf _{xx}u_{xx}^2 \ge C_3^{-1} \ii s\vf  u_{xx}^2 - C_3 \int_0^T\!\!\!\int _\omega s\vf u_{xx}^2 .
\ee
Claim 3 is clear, for $\psi ''<0$ on $[0,L]\setminus \omega$.\\
{\sc Claim 4.}
There exist some constants $s_4>0$ and $C_4>1$ such that for all $s\ge s_4$, we have
\begin{multline*}
\int [ 3s \vf _x u _{xx}^2 + (9s^3 \vf _x^3 -3s \vf _{xxx} -\al ) u_x^2  + 2 \vf _{xx} u_x u_{xx} ]  \big\vert _0^L  \\
\ge C_4^{-1} \int_0^T [  (s\vf u^2_{xx} )_{\vert x=0} +  (s\vf u^2_{xx} )_{\vert x=L} +(s^3\vf ^3 u^2_{x} )_{\vert x=0}] dt.
\end{multline*}
Since ${u_x}_{\vert x=L}=0$  and 
\[
[( 9s^3\vf _x^3  -3s \vf _{xxx} -\alpha  ) u_x^2]_{\vert x=0} = 
[(8s^3\vf _x^3 -s (\vf _t + 4\vf _{xxx})) {u_x^2}]_{\vert x=0}, 
\]
we obtain with \eqref{C3} for $s\ge s_4$ with $s_4$ large enough, 
\[
[( 9s^3\vf _x^3   -3s \vf _{xxx} -\alpha  ) u_x^2] \big\vert _0^L  \ge \kappa_5 [(s\vf)^3u_x^2]_{\vert x=0}
\]
and 
\[
3s \vf _x u_{xx}^2 \vert _0^L \ge \kappa _6 ([s\vf u_{xx}^2]_{\vert x=0}  + [s\vf u_{xx}^2]_{\vert x=L}  ) 
\]
for some constant $\kappa_5,\kappa _6>0$. Finally
\[
| [ 2s\vf_{xx} u_x u_{xx} ]_{x=0} | \le \frac{\kappa _6}{2} [s\vf  u_{xx}^2]_{\vert x=0}  + \kappa _7 [s\vf u_x ^2]_{\vert x=0}
\]
for some constant $\kappa _7>0$. Since $s\vf (t,0) \ll (s\vf )^3 (t,0) $ for $s\gg 1$, Claim 4 follows. 

We infer from Claims 1, 2, 3, and 4 that for some positive constants $s_0,C$ and all $s\ge s_0$ 
\begin{multline}
\ii  [ (s\vf )^5 |u|^2 + (s\vf ) ^3 |u_x|^2 + s\vf |u_{xx}|^2 ]
+  \int_0^T [  (s\vf u^2_{xx} )_{\vert x=0} +  (s\vf u^2_{xx} )_{\vert x=L} +(s^3\vf ^3 u^2_{x} )_{\vert x=0}] dt \\
\le C ( \ii |w|^2 + \int_0^T \!\!\! \int_{\omega}  [ (s\vf )^5 |u|^2 + (s\vf ) ^3 |u_x|^2 + s\vf |u_{xx}|^2 ] \  ). \label{C40}
\end{multline}
Replacing $u$ by $e^{-s\vf }q$  yields \eqref{C7}.
\qed

\noindent
{\em Proof of Corollary \ref{cor11}.}  Note first that for $\xi\in Y_\frac{1}{4}$ and $v_T\in L^2(0,L)$, one can prove that \eqref{nc2} 
has a unique solution $v\in Y_\frac{1}{4}$, by using the contraction mapping principle for the integral equation. 
Corollary \ref{cor11} follows from Proposition \ref{prop10} by taking $q_0(x)=v_T(L-x)$,
$q(t,x)=v(T-t,L-x)$,  and  $f(t,x)=-\xi (T-t,L-x) q_x(t,x)$, assuming first that $\xi\in Y_\frac{1}{4}\cap L^\infty (Q)$ (so that $f\in L^2(Q)$). 
Indeed, with $u=e^{-s\vf }q$, 
\[
w=e^{-s\vf } L( e^{ s\vf} u) = -\xi (T-t,L-x) (u_x+ s\vf _x u), 
\] 
so that 
\ba
\int\!\!\!\int |w|^2 dx dt &\le&   C \int_0^T\!\!\! \int_0^L |\xi (T-t,L-x)|^2 (|u_x|^2 + |s\vf _x u|^2) dxdt \nonumber \\
& \le &  C   \int_0^T || \xi (T-t) ||^2_{L^2(0,L)} \big(  ||u_x||^2_{L^\infty (0,L)} + ||s\vf _x u||^2 _{L^\infty (0,L)} \big) dt \nonumber \\
& \le & C || \xi ||^2_{L^\infty (0,T,L^2(0,L) ) } \int_0^T\!\!\!\int_0^L [u_x^2+u_{xx}^2 + \frac{s^2}{t^2(T-t)^2} (u^2+u_x^2)]dx.\label{C41}
\ea
Combining \eqref{C40} with \eqref{C41}, picking $s\gg 1$, and replacing again $u$ by $e^{-s\vf} v(T-t,L-x)$ yields \eqref{D60}. 
The result for $\xi\in Y_\frac{1}{4}$ follows by density.\qed
\subsubsection{Internal observation\label{Sec3.1.2}}

We go back to the adjoint system (\ref{nc2}). Our next goal is to remove the terms $v_{xx}$ and $v_x$ from the r.h.s. of
\eqref{D60}. In addition to the weight $\tilde \varphi(  t,x)  =\frac{1}{t(  T-t)  }\psi( L- x)$, we introduce the functions
\begin{equation}
\hat{\varphi} (t) =  \frac{1}{t(  T-t)  } \max_{x\in [0,L]} \psi( x)
  =\frac{\psi(  0)  }{t(  T-t)  }\text{ and }
\check{\varphi} (t)=  \frac{1}{t(  T-t)  }  \min_{x\in [0,L]} 
\psi (x) =\frac{\psi(  l_{3})  }{t(  T-t)  },
\label{o5}%
\end{equation}
where we used \eqref{C4}. By \eqref{C5},  we have 
\begin{equation}
\hat{\varphi}(t)<\frac{4}{3}\check{\varphi}(t),\quad t\in (0,T). \label{o5_new}%
\end{equation}

\begin{lemma}
\label{newcarleman}Let $0<l_1<l_2<L$, $\xi\in Y_\frac{1}{4}$,
 and $\tilde s_{0}$ be as in Corollary \ref{cor11}. Then 
there exists a constant $C=C(T, || \xi ||_{ Y_{\frac{1}{4} } } ) >0$ such that for any $s\ge \tilde s_0$ and any $v_{T}\in
L^{2}(  0,L)  $, the solution $v$ of (\ref{nc2}) satisfies
\begin{equation}%
\begin{array}
[c]{lll}%
{\displaystyle\int_{Q}} \left\{  (s \check{\varphi} )^{5} |v|^{2}+ (s\check{\varphi}) ^{3}|v_{x}|^{2}+
 s\check{\varphi} |v_{xx}| ^{2}\right\}  e^{-2s\hat{\varphi}} dxdt & \leq & C_{1} s^{10} {\displaystyle\int_{0}^{T}}e^{s(
6\hat{\varphi}-8\check{\varphi})  }\check{\varphi}^{31}\left\Vert
v(  t,\cdot)  \right\Vert _{L^{2}(  \omega)  }^{2}dt,
\end{array}
\label{NewCarl}%
\end{equation}
where $Q=(0,T)\times(0,L)$ and $\omega=(l_{1},l_{2})\subset(0,L)$.
\end{lemma}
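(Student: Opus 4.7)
The goal is to eliminate the $|v_x|^2$ and $|v_{xx}|^2$ terms from the right-hand side of the Carleman estimate \eqref{D60} of Corollary~\ref{cor11}, leaving only the $L^2(\omega)$-norm of $v$ on the right. The approach combines three ingredients: monotonicity of the weights $\check\varphi\le\tilde\varphi\le\hat\varphi$, local integrations by parts against a smooth cutoff, and the adjoint equation $v_{xxx}=-v_t-\xi v_x$.

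Since $\check\varphi(t)\le\tilde\varphi(t,x)\le\hat\varphi(t)$ on $Q$, we have $(s\check\varphi)^k\le(s\tilde\varphi)^k$ and $e^{-2s\hat\varphi}\le e^{-2s\tilde\varphi}$, so the left-hand side of \eqref{NewCarl} is pointwise dominated by the left-hand side of \eqref{D60}. It therefore suffices to bound the right-hand side of \eqref{D60} by $C s^{10}\int_0^T e^{s(6\hat\varphi-8\check\varphi)}\check\varphi^{31}\|v(t,\cdot)\|^2_{L^2(\omega)}\,dt$ up to a small fraction of the LHS of \eqref{D60}, which can be absorbed. The $v^2$ contribution is immediate: on $\omega$ one has $(s\tilde\varphi)^5 e^{-2s\tilde\varphi}\le C s^5\check\varphi^5 e^{-2s\check\varphi}\le C s^{10}\check\varphi^{31}e^{s(6\hat\varphi-8\check\varphi)}$, where the bound $e^{-2s\check\varphi}\le e^{s(6\hat\varphi-8\check\varphi)}$ follows from $\check\varphi\le\hat\varphi$, and the extra polynomial factors are absorbed using $s\check\varphi\ge c>0$ for $s\ge\tilde s_0$.

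To handle the derivative terms I apply Corollary~\ref{cor11} with observation on a strictly smaller interval $\omega_0\Subset\omega$ (arranging the weight $\psi$ so that its minimum on the observation region lies inside $\omega_0$), and pick a cutoff $\eta\in C_c^\infty(\omega)$ with $\eta\equiv 1$ on $\omega_0$. For the $v_x^2$ term, two integrations by parts in $x$ give
\[
\int_{\omega_0}(s\tilde\varphi)^3 e^{-2s\tilde\varphi}v_x^2\,dx\le\frac{1}{2}\int_\omega\bigl[\eta(s\tilde\varphi)^3 e^{-2s\tilde\varphi}\bigr]_{xx}v^2\,dx-\int_\omega\eta(s\tilde\varphi)^3 e^{-2s\tilde\varphi}\,v\,v_{xx}\,dx,
\]
with no boundary contribution because $\eta$ vanishes on $\partial\omega$. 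Young's inequality on the last product, splitting the weight as $(s\tilde\varphi)^3 e^{-2s\tilde\varphi}=(s\tilde\varphi)^{1/2}e^{-s\tilde\varphi}\cdot(s\tilde\varphi)^{5/2}e^{-s\tilde\varphi}$, generates a small multiple $\delta\,s\tilde\varphi e^{-2s\tilde\varphi}v_{xx}^2$ (absorbable into the LHS of \eqref{D60}) and a residual $C_\delta(s\tilde\varphi)^5 e^{-2s\tilde\varphi}v^2$, which is dominated pointwise on $\omega$ as in the previous paragraph.

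The treatment of the $v_{xx}^2$ term over $\omega_0$ is the main obstacle. With $\rho:=s\tilde\varphi e^{-2s\tilde\varphi}$, one integration by parts followed by the substitution $v_{xxx}=-v_t-\xi v_x$ yields
\[
\int_\omega\eta\rho\,v_{xx}^2\,dx=-\int_\omega(\eta\rho)_x v_x v_{xx}\,dx+\int_\omega\eta\rho\,v_x v_t\,dx+\int_\omega\eta\rho\,\xi\,v_x^2\,dx.
\]
The cross term $v_x v_{xx}$ and the term $\eta\rho\xi v_x^2$ are controlled by Young combined with the Step~3 estimate, absorbing a small fraction into the LHS of \eqref{D60}. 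The difficult piece is $\iint\eta\rho\,v_x v_t\,dxdt$: naive integration by parts in either variable alone is circular because $v_x v_t$ is antisymmetric under the interchange $\partial_x\leftrightarrow\partial_t$. The remedy is to exploit the vanishing of $\rho$ at $t=0,T$ and combine an integration by parts in $t$ with one in $x$, transferring the derivatives onto the weight and onto $v$ itself. After this maneuver the remaining terms involve $v^2$, $vv_x$ and $v_x^2$ multiplied by weights containing up to four derivatives of $\rho$ (each costing a factor $\lesssim s^2\check\varphi^2$), which are then bounded by the right-hand side of \eqref{NewCarl}. This is what produces the large powers $s^{10}$, $\check\varphi^{31}$ and the exponential factor $e^{s(6\hat\varphi-8\check\varphi)}$. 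Summing the four steps and taking the Young parameters small enough gives \eqref{NewCarl}.
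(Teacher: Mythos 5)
Your reduction of the $|v|^2$ and $|v_x|^2$ terms is sound (the weight comparisons on $\omega$ and the cutoff integration by parts for $\int\eta(s\tilde\vf)^3e^{-2s\tilde\vf}v_x^2$ both work, with the generated $\delta\int_\omega s\tilde\vf e^{-2s\tilde\vf}v_{xx}^2$ absorbable into the left side of \eqref{D60}). But the treatment of $\int_\omega s\tilde\vf e^{-2s\tilde\vf}v_{xx}^2$ contains a genuine gap, and you have in fact put your finger on it yourself: after the substitution $v_{xxx}=-v_t-\xi v_x$ you are left with $\ii\eta\rho\,v_xv_t$, and this term cannot be reduced to lower order by integration by parts. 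Since $\partial_x(vv_t)-\partial_t(vv_x)\equiv 0$, any combination of integrations by parts in $t$ and $x$ applied to $\ii\eta\rho\,v_xv_t$ returns either the same term or a trivial identity; and substituting the equation again to eliminate $v_t$ just reproduces $\ii\eta\rho\,v_{xx}^2$ exactly, closing the circle. The proposed ``remedy'' therefore does not exist as described. This is not a technical nuisance but the heart of the matter: a purely local (cutoff) argument would amount to an interior estimate gaining two spatial derivatives for the Airy operator from $L^2$ control of $v$ on $\omega$, and no such local elliptic-type estimate holds for an operator that is first order in $t$ and third order in $x$.

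The paper circumvents this obstruction by a completely different mechanism. It bounds $I_1$ and $I_2$ via the Sobolev interpolation inequalities $\norm{v}_{H^1(\omega)}\le K_1\norm{v}_{H^{8/3}(\omega)}^{3/8}\norm{v}_{L^2(\omega)}^{5/8}$ and $\norm{v}_{H^2(\omega)}\le K_2\norm{v}_{H^{8/3}(\omega)}^{3/4}\norm{v}_{L^2(\omega)}^{1/4}$, applies Young's inequality (this is precisely where the factors $s^{10}\check\vf^{31}e^{s(6\hat\vf-8\check\vf)}$ come from, via the splitting of the weight $s\check\vf e^{-2s\check\vf}$ into a part $e^{-2s\hat\vf}\check\vf^{-9}s^{-2}$ carried by $\norm{v}^2_{H^{8/3}}$ and its complementary power --- not from derivatives of a cutoff weight as you suggest), and is then left with $\epsilon s^{-2}\int_0^Te^{-2s\hat\vf}\check\vf^{-9}\norm{v}^2_{H^{8/3}(\omega)}dt$. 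That residual term is controlled by a three-step bootstrap: the auxiliary functions $v_j=\theta_j v$ with $\theta_j=e^{-s\hat\vf}\check\vf^{-1/2},e^{-s\hat\vf}\check\vf^{-5/2},e^{-s\hat\vf}\check\vf^{-9/2}$ each solve a forced Airy system, and the \emph{global} smoothing estimates of Propositions \ref{est2}--\ref{est3} (interpolated in the scales $X_\theta,Y_\theta$) gain $2/3$ of a derivative at each step, yielding $v_3\in L^2(0,T;H^{8/3}(0,L))$ with a bound by the weighted global $H^2$ quantity on the left of \eqref{L2_new}, which is then absorbed by taking $\epsilon$ small; the condition $\hat\vf<\frac43\check\vf$ from \eqref{C5} is what makes the exponential bookkeeping close. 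If you want to complete your argument you will need to import this global smoothing machinery (or an equivalent gain-of-regularity device); local integrations by parts alone cannot do it.
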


\begin{proof}
We follow the same approach as in \cite{GG}. From (\ref{D60}) and
(\ref{o5})-(\ref{o5_new}), we first obtain%
\begin{multline}%
\displaystyle\int_{Q}
\left\{ s^{5}\check{\varphi}^{5}  |v| ^{2}+ s^{3}\check{\varphi}%
^{3}|v_{x}|^{2}+s\check{\varphi} |v_{xx}|^{2}\right\}  e^{-2s\check{\varphi}} dxdt\\
\leq C \displaystyle \int_{0}^{T}\!\!\!\int_{\omega}\left\{
s^{5}\check{\varphi}^{5}\left\vert v\right\vert ^{2}+s^{3}\check{\varphi}%
^{3}|v_{x}|^{2}+s\check{\varphi}|v_{xx}|^{2}\right\}  e^{-2s\check{\varphi}}dxdt =:C(I_{0}+I_{1}+I_{2}).%
\label{L2_new}%
\end{multline}
Since $\check{\varphi}$ and $ \hat{\varphi}$ do not depend on $x$, we clearly have that 
\begin{equation}
I_{1}\leq s^3 \int_{0}^{T}\check{\varphi}^{3}e^{-2s\check{\varphi}}\left\Vert
v(  t,\cdot)  \right\Vert _{H^{1}(  \omega)  }^{2}dt
\label{o6}%
\end{equation}
and%
\begin{equation}
I_{2}\leq s\int_{0}^{T}\check{\varphi}e^{-2s\check{\varphi}}\left\Vert
v(  t,\cdot)  \right\Vert _{H^{2}(  \omega)  }%
^{2}dt\text{.} \label{o7}%
\end{equation}

Using interpolation in the Sobolev spaces $H^s(\omega)$ ($s\ge 0$),  we obtain for some positive constants $K_1,K_2$
\begin{equation}
\left\Vert v(  t,\cdot)  \right\Vert _{H^{1}(  \omega)
}\leq K_{1}\left\Vert v(  t,\cdot)  \right\Vert _{H^{8/3}(
\omega)  }^{3/8}\left\Vert v(  t,\cdot)  \right\Vert
_{L^{2}(  \omega)  }^{5/8} \label{o8}%
\end{equation} 
and
\begin{equation}
\left\Vert v(  t,\cdot)  \right\Vert _{H^{2}(  \omega)
}\leq K_{2}\left\Vert v(  t,\cdot)  \right\Vert _{H^{8/3}(
\omega)  }^{3/4}\left\Vert v(  t,\cdot)  \right\Vert
_{L^{2}(  \omega)  }^{1/4}\text{.} \label{o9}%
\end{equation}
Replacing (\ref{o8}) and (\ref{o9}) in (\ref{o6}) and (\ref{o7}),
respectively, yields%
\begin{equation}
I_{1}\leq Cs^3 \int_{0}^{T}\check{\varphi}^{3}e^{-2s\check{\varphi}}\left\Vert
v(  t,\cdot)  \right\Vert _{H^{8/3}(  \omega)  }%
^{3/4}\left\Vert v(  t,\cdot)  \right\Vert _{L^{2}(
\omega)  }^{5/4}dt \label{o10}%
\end{equation}
and%
\begin{equation}
I_{2}\leq Cs\int_{0}^{T}\check{\varphi}e^{-2s\check{\varphi}}\left\Vert
v(  t,\cdot)  \right\Vert _{H^{8/3}(  \omega)  }%
^{3/2}\left\Vert v(  t,\cdot)  \right\Vert _{L^{2}(
\omega)  }^{1/2}dt\text{.} \label{o11}%
\end{equation}
Next, an application of Young inequality in (\ref{o10}) and (\ref{o11}) gives%
\begin{align}
I_{1}    
&  \leq Cs^3\int_{0}^{T}\check{\varphi}^{3}e^{-2s\check{\varphi}}e^{-\frac{3}{4}%
s\hat{\varphi}}e^{\frac{3}{4}s\hat{\varphi}}\check{\varphi}^{-\frac{27}{8}%
}\check{\varphi}^{\frac{27}{8}}\left\Vert v(  t,\cdot)  \right\Vert
_{H^{8/3}(  \omega)  }^{3/4}\left\Vert v(  t,\cdot)
\right\Vert _{L^{2}(  \omega)  }^{5/4}dt\nonumber\\
&  \leq C_{\epsilon} s^6 \int_{0}^{T}e^{s(  \frac{6}{5}\hat{\varphi}-\frac
{16}{5}\check{\varphi})  }\check{\varphi}^{51/5}\left\Vert v(
t,\cdot)  \right\Vert _{L^{2}(  \omega)  }^{2}dt
 +\epsilon s ^{-2} \int_{0}^{T}e^{-2s\hat{\varphi}}\check{\varphi}^{-9}\left\Vert
v(  t,\cdot)  \right\Vert _{H^{8/3}(  \omega)  }%
^{2}dt\label{o12}
\end{align}
and%
\begin{align}
I_{2}  
&  \le Cs \int_{0}^{T}e^{-2s\check{\varphi}}e^{-\frac{3}{2}s\hat{\varphi}}%
e^{\frac{3}{2}s\hat{\varphi}}\check{\varphi}^{-\frac{27}{4}}\check{\varphi
}^{\frac{31}{4}}\left\Vert v(  t,\cdot)  \right\Vert _{H^{8/3}%
(  \omega)  }^{3/2}\left\Vert v(  t,\cdot)  \right\Vert
_{L^{2}(  \omega)  }^{1/2}dt\nonumber\\
&  \leq C_{\epsilon} s^{10} \int_{0}^{T}e^{s(  6\hat{\varphi}-8\check{\varphi
})  }\check{\varphi}^{31}\left\Vert v(  t,\cdot)  \right\Vert
_{L^{2}(  \omega)  }^{2}dt
 +\epsilon  s^{-2} \int_{0}^{T}e^{-2s\hat{\varphi}}\check{\varphi}^{-9}\left\Vert
v(  t,\cdot)  \right\Vert _{H^{8/3}(  \omega)  }%
^{2}dt\text{,}\label{o13}
\end{align}
for any $\epsilon>0$. Note that 
\be
\label{o13bis}
I_0 + 
s^6 \int_{0}^{T}e^{s(  \frac{6}{5}\hat{\varphi}-\frac{16}{5}\check{\varphi
})  }\check{\varphi}^{51/5}\left\Vert v(  t, \cdot )
\right\Vert _{L^{2}(  \omega)  }^{2}dt
\leq 
Cs^{10} {\displaystyle\int_{0}^{T}} e^{s(  6\hat{\varphi}-8\check{\varphi
})  }\check{\varphi} ^{31}\left\Vert v(  t,\cdot)
\right\Vert _{L^{2}(  \omega)  }^{2}dt.
\ee
Gathering together  (\ref{L2_new})   and (\ref{o12})-(\ref{o13bis}), 
we obtain%
\begin{multline}%
{\displaystyle\int_{Q}} 
\left\{ s^{5}\check{\varphi}^{5}  |v| ^{2}+ s^{3}\check{\varphi}%
^{3}|v_{x}|^{2}+s\check{\varphi} |v_{xx}|^{2}\right\}  e^{-2s\check{\varphi}} dxdt\\
 \leq  C s^{10} {\displaystyle\int_{0}^{T}} e^{s(
6\hat{\varphi}-8\check{\varphi})  }\check{\varphi} ^{31}\left\Vert
v(  t,\cdot)  \right\Vert _{L^{2}(  \omega)  }^{2}dt
+  2\epsilon s^{-2} {\displaystyle\int_{0}^{T}} e^{-2s\hat{\varphi}}\check{\varphi
}^{-9}\left\Vert v(  t,\cdot)  \right\Vert _{H^{8/3}(
\omega)  }^{2}dt\text{.}%
\label{o14}%
\end{multline}
It remains to estimate the integral term
\[
{\displaystyle\int_{0}^{T}} e^{-2s\hat{\varphi}}\check{\varphi}^{-9}\left\Vert
v(  t,\cdot)  \right\Vert _{H^{8/3}(  \omega)  }%
^{2}dt\text{.}
\]

Let $v_{1}(  t,x)  :=\theta_{1}(  t)  v(
t,x)  $ with
\[
\theta_{1}(  t)  =\exp(  -s\hat{\varphi})
\check{\varphi}^{-\frac{1}{2}}\text{.}%
\]
Then $v_{1}$ satisfies the system
\begin{equation}
\left\{
\begin{array}
[c]{lll}%
-v_{1t}-v_{1xxx}=f_{1}:=\xi\theta_{1}v_{x}-\theta_{1t}v &  & \text{in }(
0,T)  \times(  0,L)  \text{,}\\
v_{1}(  t,0)  =v_{1}(  t,L)  =v_{1x}(  t,0)
=0 &  & \text{in }(  0,T)  \text{,}\\
v_{1}(  T,x)  =0
&  & \text{in }(  0,L)  .
\end{array}
\right.  \label{o15}%
\end{equation}
Now, observe that, since $v_x(t,0)=0$, $\xi\in L^\infty (0,T,L^2(0,L)) $ and $\left\vert \theta_{1t}\right\vert
\leq C s \check{\varphi}^{\frac{3}{2}}\exp(  -s\hat{\varphi})  $, we have%
\ba
\left\Vert f_{1}\right\Vert _{L^{2}(  (  0,T)  \times( 0,L)  )  }^{2}
&\leq&  C ||\xi||^2_{L^\infty (0,T,L^2(0,L))}\int_0^T e^{-2s\hat \vf} ||v_x||^2_{L^\infty (0,L)} dt +
C\int_Q e^{-2s\hat{\varphi}}  s^2\check{\varphi}^{3}|v|^{2}dxdt  \nonumber\\
&\leq&
C \int _Q \left\{  s^2 \check\varphi ^{3}  |v| ^{2}+ s |v_{x}|^{2}+s^{-1}  |v_{xx}|^{2}\right\}  e^{-2s\check{\varphi}} dxdt \label{o16}
\ea
for some constant $C>0$ and all $s\ge s_0$.
Moreover, by Proposition \ref{est3}, $v_{1}\in Y_{1/2}$. Then, interpolating between 
$L^{2}(  0,T;H^{2}(  0,L)  )  $ and $L^{\infty}(
0,T;H^{1}(  0,L)  )  $, we infer that $v_{1}\in L^{4}(
0,T;H^{3/2}(  0,L)  )  $ and%
\begin{equation}
\left\Vert v_{1}\right\Vert _{L^{4}(  0,T;H^{3/2}(  0,L)
)  }\leq C\left\Vert f_{1}\right\Vert _{L^{2}(  (  0,T)
\times(  0,L)  )  }\text{.} \label{o17}%
\end{equation}

Let $v_{2}(  t,x)  :=\theta_{2}(  t)  v(
t,x)  $ with
\[
\theta_{2}=\exp(  -s\hat{\varphi})  \check{\varphi}^{-\frac{5}{2}%
}\text{.}%
\]
Then $v_{2}$ satisfies system (\ref{o15}) with $f_{1}$ replaced by%
\[
f_{2}:=\xi\theta_{2}\theta_{1}^{-1}v_{1x}-\theta_{2t}\theta_{1}^{-1}%
v_{1}\text{.}%
\]

Observe that%
\[
\left\vert \theta_{2}\theta_{1}^{-1}\right\vert +\left\vert \theta_{2t}%
\theta_{1}^{-1}\right\vert \leq Cs\text{.}%
\]
On the other hand, since $\xi\in L^4(  0,T;H^\frac{1}{2}(  0,L))  $  and $v_{1x}\in L^{4}(
0,T;H^{\frac{1}{2}}(  0,L)  )  $ by (\ref{o17}), we infer that
$\xi v_{1x}\in L^{2}(  0,T;H^{1/3}(  0,L)  )  $ (the product of two functions in $H^\frac{1}{2}(0,L)$ being in 
$H^\frac{1}{3}(0,L)$). Thus,
we obtain%
\begin{equation}
\left\Vert f_{2}\right\Vert _{L^{2}(  0,T;H^{1/3}(  0,L)
)  }\leq Cs \left\Vert v_{1}\right\Vert _{L^{4}(  0,T;H^{3/2}(
0,L)  )  }\text{.} \label{o19}%
\end{equation}

Interpolating between (\ref{16}) and (\ref{18}), we have that $v_{2}\in L^{2}(
0,T;H^{7/3}(  0,L)  )  \cap L^\infty (  0,T;H^{4/3}  (  0,L)  )  $ with
\begin{equation}
\left\Vert v_{2}\right\Vert _{L^{2}(  0,T;H^{7/3}(  0,L)
)  \cap L^{\infty}(  0,T;H^{4/3}(  0,L)  )  }\leq
C\left\Vert f_{2}\right\Vert _{L^{2}(  0,T;H^{1/3}(  0,L)
)  }\text{.} \label{o18}%
\end{equation}

Finally, let $v_{3}:=\theta_{3}(  t)  v(  t,x)  $ with%
\[
\theta_{3}(  t)  =\exp(  -s\hat{\varphi})
\check{\varphi}^{-\frac{9}{2}}\text{.}%
\]
Then $v_{3}$ satisfies system (\ref{o15}) with $f_{1}$ replaced by%
\[
f_{3}:=\xi\theta_{3}\theta_{2}^{-1}v_{2x}-\theta_{3t}\theta_{2}^{-1}%
v_{2}\text{.}%
\]
Again%
\[
\left\vert \theta_{3}\theta_{2}^{-1}\right\vert +\left\vert \theta_{3t}%
\theta_{2}^{-1}\right\vert \leq Cs\text{.}%
\]
Interpolating again between (\ref{16}) and (\ref{18}), we have that%
\begin{equation}
\left\Vert v_{3}\right\Vert _{L^{2}(  0,T;H^{8/3}(  0,L)
)  \cap L^{\infty}(  0,T;H^{5/3}(  0,L)  )  }\leq
C\left\Vert f_{3}\right\Vert _{L^{2}(  0,T;H^{2/3}(  0,L) )  }.
 \label{o20}%
\end{equation}
Since $\xi\in Y_\frac{1}{4}$, we have that $\xi\in L^3(  0,T;H^\frac{2}{3}(  0,L))$. On the other hand, by \eqref{o18},
\[
v_{2x} \in  L^{2}(  0,T;H^{4/3}(  0,L) )  \cap L^{\infty}(  0,T;H^{1/3}(  0,L)  ).
\]
It follows that $v_{2x}\in L^6(0,T,H^\frac{2}{3}(0,L))$. 
Since $H^\frac{2}{3}(0,L)$ is an algebra, we  conclude that 
$\xi v_{2x} \in L^2(0,T, H^\frac{2}{3}(0,L))$. 
Therefore
\be
\left\Vert f_{3}\right\Vert _{L^{2}(  0,T;H^{2/3}(  0,L) )  }
\leq  C s \left\Vert v_{2}\right\Vert _{L^{2}(  0,T;H^{7/3}(  0,L)
)  \cap L^{\infty}(  0,T;H^{4/3}(  0,L)  )  }.\label{o21}
\ee
Thus we infer from (\ref{o16})-(\ref{o21}) that for some constants $C_1,C_2>0$ and all $s\ge s_0$%
\begin{eqnarray}
\left\Vert v_{3}\right\Vert ^2_{L^{2}(  0,T;H^{8/3}(  0,L) )  }
&\leq& C_1s^4 || f_1||^2_{ L^2( (0,T)\times (0,L) ) } \nonumber \\
&\leq& C_{2} {\displaystyle\int_{Q}}  
\left\{ s^{6}\check{\varphi}^{3}  |v| ^{2}+ 
s^{5} |v_{x}|^{2}+s^3  |v_{xx}|^{2}\right\}  e^{-2s\check{\varphi}} dxdt.\label{o22}%
\end{eqnarray}
Hence, replacing $v_3 =\exp(  -s\hat{\varphi}) \check{\varphi}^{-\frac{9}{2}} v$ in \eqref{o22} yields for some constant $C_3>0$ 
\begin{equation}
{\displaystyle\int_{0}^{T}} e^{-2s\hat{\varphi}}\check{\varphi}^{-9}\left\Vert
v(  t,\cdot)  \right\Vert _{H^{8/3}(  \omega)  } ^{2}dt
\leq  C_{3} s^2 {\displaystyle\int_{Q}} 
\left\{ (s\check{\varphi} )^{5}  |v| ^{2}+ (s\check{\varphi} )^3 |v_{x}|^{2}+s\check{\varphi}  |v_{xx}|^{2}\right\}  e^{-2s\check{\varphi}} dxdt.\\
\label{o23}%
\end{equation}
Then, picking $\epsilon= 1/(4C_3)$ in (\ref{o14}) results in 
\[%
\begin{array}
[c]{lll}%
{\displaystyle\int_{Q}} s\check{\varphi}e^{-2s\hat{\varphi}}\left\{
s^{4}\check{\varphi}^{4} |v|^{2}+s^{2}\check{\varphi}^{2}|v_{x}|^{2}%
+|v_{xx}|^{2}\right\}  dxdt & \leq & C_4 s^{10} {\displaystyle\int_{0}^{T}}
e^{s(  6\hat{\varphi}-8\check{\varphi})  }\check{\varphi}
^{31}\left\Vert v(  t,\cdot)  \right\Vert _{L^{2}(
\omega)  }^{2}dt%
\end{array}
\]
for  all $s\ge \tilde s_0$ and some positive constant $C_4=C_4(T, ||\xi ||_{Y_{ \frac{1}{4} } } )$. 
\end{proof}

We are in a position to prove the null controllability
of system (\ref{nc1}).

\begin{theorem}
\label{nullsyslin}Let $T>0$. Then there exists $\delta >0$ such that for any  $\xi\in Y_{1/4}$ with $||\xi ||_{L^2(0,T,H^1(0,L))} \le\delta $ and any 
$u_{0}\in L^{2}(0,L)  $, one may find a control 
$f\in L^{2}(  (  0,T)  \times\omega)  $ such that the solution
$u$ of (\ref{nc1}) fulfills $u(  T,\cdot)  =0.$
\end{theorem}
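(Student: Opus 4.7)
The standard duality (HUM) approach reduces the null controllability of \eqref{nc1} to the observability inequality
\[
\|v(0,\cdot)\|_{L^2(0,L)}^2 \le C \int_0^T\!\!\!\int_\omega |v(t,x)|^2\,dx\,dt
\]
for all solutions $v$ of the adjoint system \eqref{nc2}. The plan is to derive this inequality from Lemma \ref{newcarleman} together with an $L^2$-energy identity for \eqref{nc2}, and then to construct the control as the minimizer of a suitable quadratic functional.

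On the central subinterval $t\in[T/4,3T/4]$, the weights $\check{\varphi}$ and $\hat{\varphi}$ are bounded between positive constants, so the left-hand side of \eqref{NewCarl} dominates $c\int_{T/4}^{3T/4}\|v(t,\cdot)\|^2_{L^2(0,L)}\,dt$. Conversely, \eqref{o5_new} ensures that $6\hat{\varphi}(t)-8\check{\varphi}(t)$ is negative on $(0,T)$ and tends to $-\infty$ as $t\to 0,T$, so the weight $e^{s(6\hat{\varphi}-8\check{\varphi})}\check{\varphi}^{31}$ appearing on the right-hand side of \eqref{NewCarl} is uniformly bounded in time; hence
\[
\int_{T/4}^{3T/4}\|v(t,\cdot)\|^2_{L^2(0,L)}\,dt \le C\int_0^T\!\!\!\int_\omega |v|^2\,dx\,dt.
\]
Multiplying \eqref{nc2} by $v$, integrating on $(0,L)$ and using the boundary conditions $v(t,0)=v(t,L)=v_x(t,0)=0$, I would obtain
\[
\tfrac{d}{dt}\|v(t,\cdot)\|_{L^2}^2 = \int_0^L \xi_x(t,x)\, v(t,x)^2\,dx + v_x(t,L)^2.
\]
By the 1D Gagliardo--Nirenberg inequality $\|v\|^2_{L^4(0,L)}\le C\|v\|_{L^2}\|v\|_{H^1}$ and H\"older in time, the $\xi$-contribution to the total variation of $\|v\|^2_{L^2}$ on $[0,T]$ is bounded by $C\|\xi\|_{L^2(0,T;H^1)}\|v\|_{L^\infty(0,T;L^2)}\|v\|_{L^2(0,T;H^1)}$; combining this with the Kato-type smoothing estimate $\|v\|_{L^2(0,T;H^1)}\le C(\|\xi\|_{Y_{1/4}})\|v_T\|_{L^2}$ for the adjoint system, the smallness hypothesis $\|\xi\|_{L^2(0,T;H^1)}\le\delta$ closes a Gronwall-type argument and yields $\|v(0,\cdot)\|^2_{L^2}\le C\|v(t,\cdot)\|^2_{L^2}$ uniformly in $t\in[T/4,3T/4]$. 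Averaging in $t$ then produces the desired observability inequality.

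Given observability, I would construct the control by minimizing on $L^2(0,L)$ the functional
\[
J(v_T) := \tfrac12\int_0^T\!\!\!\int_\omega |v(t,x)|^2\,dx\,dt + \int_0^L u_0(x)\,v(0,x)\,dx,
\]
where $v$ denotes the solution of \eqref{nc2} with final datum $v_T$. The functional $J$ is continuous (by well-posedness of \eqref{nc2}), strictly convex, and coercive on $L^2(0,L)$ (by observability), hence admits a unique minimizer $\hat v_T$; denoting by $\hat v$ the corresponding adjoint solution, one sets $f:=\hat v\,\mathbf{1}_\omega\in L^2((0,T)\times\omega)$, which satisfies $\|f\|_{L^2}\le C\|u_0\|_{L^2}$. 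The duality identity
\[
\int_0^L u(T,x)\,v_T(x)\,dx - \int_0^L u_0(x)\,v(0,x)\,dx = \int_0^T\!\!\!\int_\omega f(t,x)\,v(t,x)\,dx\,dt,
\]
valid for all $v_T\in L^2(0,L)$, combined with the Euler--Lagrange condition $J'(\hat v_T)=0$, forces $u(T,\cdot)=0$ in $L^2(0,L)$.

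The principal technical obstacle is the energy estimate for the adjoint system with the rough coefficient $\xi\in Y_{1/4}$: the term $\int \xi_x v^2\,dx$ mixes the $L^2$ and $H^1$ norms of $v$, and closing the Gronwall-type bound requires both the smoothing estimate for the adjoint equation and the smallness of $\|\xi\|_{L^2(0,T;H^1)}$. This smallness is precisely why the hypothesis on $\delta$ is imposed, and it is also what will make the observability and control estimates uniform enough in $\xi$ to drive the Kakutani fixed-point argument for the nonlinear Theorem \ref{nullsysnon}.
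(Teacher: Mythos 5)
Your overall strategy is the same as the paper's: combine the weighted Carleman estimate of Lemma \ref{newcarleman} (using that $e^{s(6\hat\varphi-8\check\varphi)}\check\varphi^{31}$ stays bounded thanks to \eqref{o5_new} while the left-hand weights are bounded below on a central time interval) with an energy estimate for the adjoint system \eqref{nc2} to propagate the bound back to $t=0$, then conclude by a variational duality argument. Your intermediate steps are sound; the paper obtains $\|v(0)\|^2\le C\|v(\tau)\|^2$ slightly differently, proving the a priori estimate \eqref{abc1} directly with the multipliers $v$ and $(L-x)v$ and absorbing the $\xi$-term using $\|\xi\|_{L^2(0,T;H^1(0,L))}\le\delta$, whereas you invoke the $Y_{1/4}$ well-posedness of the adjoint system together with the $L^2$-energy identity; both routes work.

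The one step that fails as written is the minimization: $J$ is \emph{not} coercive on $L^2(0,L)$ with respect to the $L^2(0,L)$-norm of $v_T$. The observability inequality controls $v(0,\cdot)$, not $v_T$, by $\|v\|_{L^2((0,T)\times\omega)}$; since the adjoint equation is smoothing backward in time, there is no reverse inequality $\|v_T\|_{L^2(0,L)}\le C\|v\|_{L^2((0,T)\times\omega)}$, so minimizing sequences for $J$ need not be bounded in $L^2(0,L)$ and the minimizer generically does not belong to $L^2(0,L)$. This is the standard difficulty of HUM for null (as opposed to exact) controllability, and the paper's fix is to introduce the completion $B$ of $L^2(0,L)$ for the norm $\|v_T\|_B:=\|v\|_{L^2((0,T)\times\omega)}$ (a genuine norm by \eqref{o24} applied on subintervals $(t,T)$) and to minimize $J$ on $B$, where it is continuous, strictly convex and coercive; the Euler--Lagrange equation and the duality identity then go through exactly as you wrote them, yielding $f=1_\omega v^*$ and the bound $\|f\|_{L^2((0,T)\times\omega)}^2\le C_*\|u_0\|_{L^2(0,L)}^2$. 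With this correction your argument coincides with the paper's proof.
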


\begin{proof}
Scaling in \eqref{nc2} by $v$ and $(L-x)v$, we obtain after some computations
the estimate
\[
\label{abc}
||v||^2_{L^\infty(0,T,L^2(0,L)) } + 2 ||v_x||^2_{L^2(0,T,L^2(0,L))} \le C(L) \left( || v_T ||^2_{L^2(0,L)}
+ ||\xi ||^2_{L^2(0,T,H^1(0,L)) } ||v_x||^2_{L^2(0,T,L^2(0,L)) } \right)
\]
for some constant $C(L)>0$. 
It follows that if $||\xi||_{L^2(0,T,H^1(0,L))}\le \delta := 1/\sqrt{C(L)}$, then we have 
\begin{equation}
\label{abc1}
\max_{t\in [0,T]} ||v (t)  ||^2_{L^2(0,L) } +  ||v_x||^2_{L^2(0,T,L^2(0,L))} \le C(L)  || v_T ||^2_{L^2(0,L)}.
\end{equation}
Replacing $v(t)$ by $v(0)$ and $v_T$ by $v(\tau )$ for $T/3<\tau <2T/3$ in \eqref{abc1}, and integrating over $\tau \in (T/3,2T/3)$, we obtain that
\be
\label{abc2}
||v(0)||^2_{L^2(0,L)} \le \frac{3C(L)}{T}  \int_{\frac{T}{3}}^{\frac{2T}{3}} ||v(\tau )||^2_{L^2(0,L)} d\tau. 
\ee
Combining \eqref{abc2} with Lemma \ref{newcarleman} for a fixed value of $s\ge \tilde s_0$,  we derive the following observability inequality%
\begin{equation}
\int_{0}^{L}\left\vert v(  0,x)  \right\vert ^{2}dx\leq C_{\ast
}\int_{0}^{T}\left\Vert v(  t,\cdot)  \right\Vert _{L^{2}(
\omega)  }^{2}dt \label{o24}%
\end{equation}
where $C_{\ast}=C_{\ast}(T, ||\xi||_{ Y_{1/4} } ) >0$. 
Using (\ref{o24}), we can deduce the existence of a function $v\in
L^{2}(  (  0,T)  \times\omega)  $ as in Theorem \ref{nullsyslin}
proceeding as follows.

On $L^{2}(  0,L)$, we define the norm
\[
\left\Vert v_{T}\right\Vert _{B}:=\left\Vert v\right\Vert _{L^{2}(
(  0,T)  \times\omega)  }\text{,}%
\]
where $v$ is the solution of (\ref{nc2}) associated with $v_{T}$. The fact that  $||\cdot ||_B$ 
is a norm comes from (\ref{o24}) applied on $(t,T)$ for $0<t<T$.

Let $B$ denote the completion of $L^{2}(  0,L)  $ with respect to the above
norm. We define a functional $J$ on $B$ by%
\[
J(  v_{T})  :=\frac{1}{2}\left\Vert v_{T}\right\Vert _{B}^{2}%
+\int_{0}^{L}v(  0,x)  u_{0}(  x)  dx\text{.}%
\]
From (\ref{o24}) we infer that $J$ is well defined and continuous on $B$. As
it is strictly convex and coercive, it admits a unique minimum $v_{T}^{\ast}%
$, characterized by the Euler-Lagrange equation%
\begin{equation}
\int_{0}^{T}\!\!\!\int_{\omega}v^{\ast}wdxdt+\int_{0}^{L}w( 0,x)
u_{0}(  x)  dx=0\text{,} \qquad \forall w_T\in B,\label{o25}%
\end{equation}
where $w$ (resp. $v^{\ast}$) denotes the solution of (\ref{nc2}) associated
with $w_{T}\in B$ (resp. $v_{T}^{\ast}\in B $). Define $f$ $\in L^{2}( (  0,T)  \times\omega)  $ by%
\begin{equation}
f:=1_{\omega}v^{\ast} , \label{o26}%
\end{equation}
and let $u$ denote the solution of (\ref{nc1}) associated with $u_{0}$  and $f$. 
Multiplying (\ref{nc1}) by $w(  t,x)  $ and integrating by parts,
we obtain for all $w_T\in L^2(0,L)$%
\begin{equation}
\int_{0}^{L}u(  T,x)  w_{T}dx=\int_{0}^{L} u_0(x)w(  0,x)  dx +\int_{0}^{T}\!\!\!\int_{\omega} v^*w
dxdt=0\text{,} \label{o27}%
\end{equation}
where the second equality follows from (\ref{o25}). Therefore $u(  T,\cdot)  =0$.
Finally, letting $w_T=v_T^*$ in \eqref{o25} and using \eqref{o24}, we obtain 
\be
\label{o40}
\int_0^T\!\!\!\int_{\omega} | f |^2dxdt \le C_\ast \int_0^L |u_0(x)|^2 dx. 
\ee

\end{proof}

\subsection{Null controllability of the nonlinear equation}

In this section we prove Theorem \ref{nullsysnon}. This is done by using
a fixed-point argument.

\subsubsection{Proof of Theorem \ref{nullsysnon}}

Consider $u$ and $\bar{u}$ fulfilling system (\ref{ncn2}) and (\ref{ncn1}),
respectively. Then $q=u-\bar{u}$ satisfies%
\begin{equation}
\left\{
\begin{array}
[c]{lll}%
q_{t}+q_{x}+( \frac{q^2}{2} +  \bar{u}q)_x  +q_{xxx}=1_{\omega}
f( t,x)  &  & \text{in }(  0,T)  \times(  0,L)
\text{,}\\
q(  t,0)  =q(  t,L)  =q_{x}(  t,L)  =0 &  &
\text{in }(  0,T)  \text{,}\\
q(  0,x)  =q_{0}(  x)  :=u_{0}(  x)  -\bar
{u}_{0}(  x)  &  & \text{in }(  0,L)  \text{.}%
\end{array}
\right.  \label{ncn3}%
\end{equation}
The objective is to find $f$ such that the solution $q$ of (\ref{ncn3}) satisfies%
\[
q(  T,\cdot)  =0\text{.}%
\]

Given $\xi\in Y_\frac{1}{4}$ and $q_0:=u_0-\bar{u}_0\in L^2(0,L)$, we consider the control problem
\ba
q_{t}+q_{x}+(\xi q)_x  +q_{xxx}=1_{\omega}
f(t,x)  &  & \text{in }(  0,T)  \times(  0,L)
\text{,} \label{CP1}\\
q(  t,0)  =q(  t,L)  =q_{x}(  t,L)  =0 &  &
\text{in }(  0,T)  \text{,} \label{CP2}\\
q(  0,x)  =q_{0}(  x)   &  & \text{in }(  0,L)  \text{.} \label{CP3}%
\ea
We can prove the following estimate
\begin{multline}
\label{estimCP}
||q||^2_{L^\infty(0,T,L^2(0,L))} + 2 ||q_x||^2_{L^2(0,T,L^2(0,L))} 
\le \tilde C(L)\big( ||q_0||^2_{L^2(0,L)} \\
+ ||\xi ||^2_{L^2(0,T,H^1(0,L))}  ||q_x ||^2_{ L^2(0,T,L^2(0,L))}  + || f || ^2_{L^2( (0,T)\times \omega )} \big)
\end{multline}
Let $\tilde \delta =\min (\delta , 1/\sqrt{\tilde C(L)})$. 
We introduce the space%
\[
E:=C^0(  [0,T];L^2(  0,L)  )  \cap L^2(
0,T;H^1(  0,L)  )  \cap H^{1}(  0,T;H^{-2}(
0,L)  )
\]
endowed with its natural norm 
\[
\left\Vert  z \right\Vert _{E} := ||z||_{Y_{1/4}} + || z ||_{ H^1 ( 0,T, H^{-2} (0,L) ) } .
\]
We consider in $L^{2}(  (  0,T)  \times(  0,L)  )  $ the
following set%
\[
B:=\left\{  z\in E; \ \left\Vert z\right\Vert _{E} \le 1 \  \textrm { and } \   ||z||_{L^2(0,T,H^1(0,L))} \le \tilde \delta  \right\} \text{.}%
\]
$B$ is compact in $L^2((0,T)\times (0,L))$, by Aubin-Lions' lemma. We will limit ourselves to controls $f$ fulfilling the
condition 
\begin{equation}
\label{bound}
|| f ||^2_{L^2((0,T)\times \omega )} \le C_* ||q_0||^2_{L^2(0,L) } 
\end{equation} 
where $C_\ast := C_\ast (T, || \bar u ||_{Y_ {1/4} }  + \frac{1}{2} )$. 
We associate with any $z\in B$ the set%
\[
\begin{array}
[c]{c}%
T(  z)  :=\left\{  q\in B;\ \ \exists f\in L^{2}(  (0,T)  \times\omega)  \text{ such that }f \text{ satisfies } \eqref{bound}  \text{  and }\right. \\
\left.  q\text{ solves \eqref{CP1}-\eqref{CP3} with }\xi=  \bar{u} +\frac{z}{2}
\text{ and }q(  T,\cdot)  =0 \right\} .
\end{array}
\]
Note that $||\bar{u}||_{L^2(0,T,H^1(0,L))}<\tilde\delta/2$ for $T\ll1$. 
By Theorem \ref{nullsyslin} and (\ref{estimCP}), we see that if $\left\Vert
q_0\right\Vert _{L^2(  0,L)  }$  and $T$ are sufficiently small, then
$T(  z)  $ is nonempty for all $z\in B$. We shall use the
following version of Kakutani fixed point theorem (see e.g. \cite[Theorem
9.B]{Zeidler}):

\begin{theorem}
\label{fixed}Let $F$ be a locally convex space, let $B\subset F$ and let
$T:B\longrightarrow2^{B}$. Assume that

\begin{enumerate}
\item $B$ is a nonempty, compact, convex set;

\item $T(  z)  $ is a nonempty, closed, convex set for all $z\in B$;

\item The set-valued map $T:B\longrightarrow2^{B}$ is upper-semicontinuous;
i.e., for every closed subset $A$ of $F$, $T^{-1}(  A)  =\left\{
z\in B;\ T(  z)  \cap A\neq\varnothing\right\}  $ is closed.
\end{enumerate}

Then $T$ has a fixed point, i.e., there exists $z\in B$ such that $z\in
T(  z)  $.
\end{theorem}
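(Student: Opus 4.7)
The plan is to reduce the set-valued statement to a family of single-valued problems in finite dimension, apply Brouwer's fixed point theorem to each of them, and then pass to the limit using the upper-semicontinuity of $T$ together with the compactness of $B$. More precisely, I would construct, for each convex symmetric open neighborhood $V$ of $0$ in $F$, an approximate continuous selection $f_V\colon B\to B$ with the property that the graph of $f_V$ lies inside the $V$-neighborhood of the graph of $T$. The conclusion will then follow from a standard net argument.

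The construction of $f_V$ goes as follows. Since $B$ is compact, cover it by finitely many translates $z_i+V$, $i=1,\dots,n$, of points $z_i\in B$. Pick $y_i\in T(z_i)$, which is possible because $T(z_i)$ is nonempty. Let $\{\phi_i\}_{i=1}^n$ be a continuous partition of unity on $B$ subordinate to the cover, and set
\[
f_V(z)=\sum_{i=1}^n \phi_i(z)\, y_i,\qquad z\in B.
\]
Then $f_V$ is continuous and takes values in the finite-dimensional convex hull $K_V:=\operatorname{conv}\{y_1,\dots,y_n\}\subset B$, which is homeomorphic to a simplex. Brouwer's fixed point theorem provides $z_V\in K_V$ with $f_V(z_V)=z_V$. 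By construction, for every $i$ with $\phi_i(z_V)>0$ we have $z_V\in z_i+V$, i.e.\ $z_i\in z_V-V$, together with $y_i\in T(z_i)$; hence $z_V$ lies in a convex combination of images $T(z_i)$ for points $z_i$ close to $z_V$.

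Now let $V$ range over a neighborhood basis of $0$ in $F$, directed by reverse inclusion. Compactness of $B$ yields a subnet $(z_{V_\alpha})$ converging to some $z^\ast\in B$. I would then argue that $z^\ast\in T(z^\ast)$ as follows. Fix any closed neighborhood $W$ of $T(z^\ast)$; by the upper-semicontinuity hypothesis, the set $A:=F\setminus W$ is closed, so $T^{-1}(A)=\{z\in B:T(z)\cap A\ne\varnothing\}$ is closed, hence its complement $\{z\in B:T(z)\subset W\}$ is open. It contains $z^\ast$, so it contains $z_{V_\alpha}$ for $\alpha$ large. Combining this with the inclusion $z_{V_\alpha}\in\operatorname{conv}\{T(z_i):z_i\in z_{V_\alpha}-V_\alpha\}$ and convexity of $T(z)$ near $z^\ast$, one checks that $z^\ast\in W$ for every such $W$, and therefore $z^\ast\in T(z^\ast)$ since $T(z^\ast)$ is closed.

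The main obstacle is the last step: one must phrase the upper-semicontinuity condition (formulated in the statement via preimages of closed sets) in a way that lets us localize $T(z_{V_\alpha})$ inside small neighborhoods of $T(z^\ast)$ as $V_\alpha\to 0$, and then exploit convexity of $T(z^\ast)$ to conclude that any convex combination of points close to $T(z^\ast)$ is itself close to $T(z^\ast)$. This uses the local convexity of $F$ in an essential way, since it allows the neighborhoods $W$ of the convex closed set $T(z^\ast)$ to be chosen convex; without local convexity the averaging step in the definition of $f_V$ would not preserve the approximate graph inclusion.
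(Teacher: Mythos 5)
The paper does not prove this statement at all: it is the Kakutani--Fan--Glicksberg fixed point theorem, quoted directly from \cite[Theorem 9.B]{Zeidler}, so there is no in-paper argument to compare with. Your sketch is the standard textbook proof of exactly that result: finite subcover $\{z_i+V\}$ of the compact set $B$, a subordinate partition of unity, the approximate selection $f_V(z)=\sum_i\phi_i(z)y_i$ with $y_i\in T(z_i)$, Brouwer's theorem on the finite-dimensional compact convex set $K_V=\operatorname{conv}\{y_1,\dots,y_n\}$ (it need not be a simplex, but Brouwer applies to any such set), and a subnet limit $z_{V_\alpha}\to z^\ast$ using compactness of $B$. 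The architecture is correct, and supplying such a proof is a legitimate alternative to the citation.

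Three details in your limit step need repair, all fixable. First, if $W$ is a \emph{closed} neighborhood of $T(z^\ast)$, then $A:=F\setminus W$ is open, not closed, so hypothesis (3) does not apply as you wrote it. Take instead $W$ \emph{open} and convex with $T(z^\ast)\subset W$; then $A=F\setminus W$ is closed, $\{z\in B:\ T(z)\subset W\}$ is open in $B$, and at the end you conclude $z^\ast\in\overline{W}$ for every such $W$. Since $T(z^\ast)$ is compact (closed in the compact $B$) and convex, Hahn--Banach separation shows that the intersection of these closures is exactly $T(z^\ast)$; this is where convexity of the \emph{values} of $T$ and local convexity of $F$ are genuinely used --- closedness of $T(z^\ast)$ alone would not suffice. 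Second, it is not enough that $z_{V_\alpha}$ itself lies in the open set $\{z:\ T(z)\subset W\}$: the convex combination representing $z_{V_\alpha}$ uses values $y_i\in T(z_i)$ at the nearby points $z_i\in z_{V_\alpha}+V_\alpha$, so you must put all these $z_i$ into that open set. This is immediate once you note that the set contains $(z^\ast+V_0)\cap B$ for some neighborhood $V_0$ of $0$ and that eventually $z_{V_\alpha}\in z^\ast+\tfrac12 V_0$ and $V_\alpha\subset\tfrac12 V_0$, but it should be said. Third, the convexity needed in the averaging step is that of $W$, not ``convexity of $T(z)$ near $z^\ast$'': from $y_i\in T(z_i)\subset W$ and $W$ convex you get $z_{V_\alpha}=\sum_i\phi_i(z_{V_\alpha})y_i\in W$. (Equivalently, one can bypass neighborhoods altogether: if $z^\ast\notin T(z^\ast)$, separate strictly by a continuous linear functional $\ell$ with $\sup_{T(z^\ast)}\ell<c<\ell(z^\ast)$, apply the argument above with $W=\{\ell<c\}$ to get $\ell(z_{V_\alpha})<c$ for large $\alpha$, and contradict $\ell(z^\ast)>c$ in the limit.) With these adjustments your sketch is a complete and correct proof.
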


Let us check that Theorem \ref{fixed} can be applied to $T$ and%
\[
F=L^{2}(  (  0,T)  \times(  0,L)  )  \text{.}%
\]
The convexity of $B$ and $T(  z)  $ for all $z\in B$ is clear. Thus
(1) is satisfied. For (2), it remains to check that $T(  z)  $ is
closed in $F$ for all $z\in B$. Pick any $z\in B$ and a sequence $\left\{
q^{k}\right\}  _{k\in\mathbb{N}}$ in $T(z)$ which converges in $F$ towards some
function $q\in B$. For each $k$, we can pick some control function $f^{k}\in
L^{2}(  (  0,T)  \times\omega)  $ fulfilling \eqref{bound} 
such that \eqref{CP1}-\eqref{CP3} are satisfied with $\xi= \bar{u} +\frac{z}{2}  $
and $q^{k}(  T,\cdot)  =0$. Extracting subsequences if needed, we
may assume that as $k\rightarrow\infty$%
\ba
f^{k}\rightarrow f &  & \text{in }L^{2}(  (  0,T)
\times\omega)  \text{ weakly,}  \label{t1} \\
q^k\rightarrow q &  & \text{in }L^{2}(  0,T;H^{1}(
0,L)  )  \cap H^{1}(  0,T;H^{-2}(  0,L)  )
\text{ weakly,}\label{t2}
\ea
By \eqref{t2}, the boundedness of $|| q^k ||_{ L^\infty (0,T,L^2(0,L))}$  and Aubin-Lions' lemma, $\{ q^k\}_{k\in \mathbb N}$ is relatively compact in $C^0([0,T],H^{-1}(0,L))$. 
Extracting a subsequence if needed, we may assume that
\[
q^k\rightarrow q \text{ strongly in } C^0([0,T],H^{-1}(0,L)). 
\] 
In particular, $q(0,x)=q_0(x)$ and $q(T,x)=0$.  On the other hand, we infer from \eqref{t2} that 
\[
\xi q^{k}\rightarrow\xi q\text{ in }L^{2}(  (  0,T)
\times(  0,L)  )  \text{ weakly.}%
\]
Therefore, $(\xi q^{k})_x \rightarrow (\xi q)_x$ in  ${\mathcal D}'(  (  0,T) \times(  0,L))$.   
Finally,  it is clear that 
\[ || f ||^2_{L^2((0,T)\times \omega )} \le C_* ||q_0||^2_{L^2(0,L)} \]
and that $q$ satisfies \eqref{CP1} with $\xi=  \bar{u} +\frac{z}{2}$ and $q(T,\cdot)  =0$.
Thus $q\in T(  z)  $ and  $T(  z)  $ is
closed. Now, let us check (3). To prove that $T$ is upper-semicontinuous, consider
any closed subset $A$ of $F$ and any sequence $\left\{  z^{k}\right\}
_{k\in\mathbb{N}}$ in $B$ such that%
\begin{equation}
z^{k}\in T^{-1}(  A) , \quad \forall k\ge 0,  \label{ncn6}%
\end{equation}
and
\begin{equation}
z^{k}\rightarrow z\ \text{ in }\ F \label{ncn5}%
\end{equation}
for some $z\in B$. We aim to prove that $z\in T^{-1}(  A)  $. By
(\ref{ncn6}), we can pick a sequence $\left\{  q^{k}\right\}  _{k\in\mathbb{N}}$
in $B$ with $q^k\in T(z^k)\cap  A$  for all $k$, and a sequence $\left\{  f^{k}\right\}  _{k\in\mathbb{N}}$ in
$L^{2}(  (  0,T)  \times\omega)  $ such that%
\begin{equation}
\left\{
\begin{array}
[c]{lll}%
q_{t}^{k}+ q_x^k +  (  (  \bar{u}+\dfrac{z^{k}}{2})  q^{k})_{x}+q_{xxx}^{k}=1_{\omega}f^{k}(  t,x)  &  & \text{in }(
0,T)  \times(  0,L)  \text{,}\\
q^{k}(  t,0)  =q^{k}(  t,L)  =q_{x}^{k}( t,L)  =0 &  & \text{in }(  0,T)  \text{,}\\
q^{k}(  0,x)  =q_{0}(  x)  &  & \text{in }( 0,L)  ,
\end{array}
\right.  \label{ncn7}%
\end{equation}%
\begin{equation}
q^{k}(  T,x)  =0, \qquad \text{ in } (0,L), \label{ncn8}%
\end{equation}
and%
\begin{equation}
\left\Vert f^{k}\right\Vert ^2_{L^2(  (  0,T)  \times
\omega)  }\leq C_*\left\Vert q_{0}\right\Vert _{L^2(  0,L)} ^2. \label{ncn9}%
\end{equation}
From (\ref{ncn9}) and the fact that $z^{k}$, $q^{k}\in B$, extracting
subsequences if needed, we may assume that as $k\rightarrow\infty$,%
\[%
\begin{array}
[c]{lll}%
f^{k}\rightarrow f &  & \text{in }L^{2}(  (  0,T)
\times\omega)  \text{ weakly,}\\
q^{k}\rightarrow q &  & \text{in }L^{2}(  0,T;H^{1}( 0,L)  )  \cap H^{1}(  0,T;H^{-2}(  0,L)  )
\text{ weakly,}\\
q^{k}\rightarrow q & & \text{in } C^0([0,T],H^{-1}(0,L)) \text{ strongly},\\
q^{k}\rightarrow q &  & \text{in }F\text{ strongly,}\\
z^{k}\rightarrow z &  & \text{in }F\text{ strongly,}\\
\end{array}
\]
where $f\in L^2((0,T)\times \omega )$ and $q\in B$. 
Again, $q(0,x)=q_0(x)$ and $q(T,x)=0$.  We also see that \eqref{CP2} and \eqref{bound} are satisfied. It remains to check that%
\begin{equation}
q_{t} +q_x +(  (  \bar{u} +\frac{z}{2})  q)  _{x}+q_{xxx}=1_{\omega } f (  t,x)  \text{.} \label{ncn10}%
\end{equation}
Observe that the only nontrivial convergence in (\ref{ncn7}) is those of the nonlinear term
$(  z^{k}q^{k})  _{x}$. Note first that 
\[
||z^kq^k||_{L^2(0,T,L^2(0,L))} \le ||z^k || _{L^\infty (0,T,L^2(0,L))} ||q^k||_{L^2(0,T,L^\infty(0,L))} \le C,
\]
so that, extracting a subsequence, one can assume that $z^kq^k\rightarrow f$ weakly in $L^2((0,T)\times (0,L))$. 
To prove that $f=zq$, it is sufficient to observe that
for any $\varphi\in {\mathcal D} (Q)$, 
\[
\int_0^T\!\!\!\int_0^L z^kq^k\varphi dxdt\to \int_0^T\!\!\!\int_0^L z q\varphi dxdt,
\]
for $z^k\to z$ and $q^k\varphi \to q\varphi$ in $F$. 
Thus%
\[%
\begin{array}
[c]{lll}%
z^{k}q^{k}\rightarrow zq &  & \text{in }L^{2}(  (  0,T)
\times(  0,L)  )  \text{ weakly.}%
\end{array}
\]
It follows that $(  z^{k}q^{k})  _{x}\rightarrow( zq)  _{x}$ in $\mathcal{D}^{\prime}(  (  0,T)
\times(  0,L)  )  $. Therefore, (\ref{ncn10}) holds and 
$q\in T(  z)  $. On the other hand, $q\in A$, since $q^{k}\rightarrow q$ in $F$ and $A$ is closed. We conclude that $z\in T^{-1}(  A)  $,
and hence $T^{-1}(  A)  $ is closed.

Il follows from Theorem \ref{fixed} that there exists
$q\in  B$ with $q\in T(  q )$, i.e. we have found a control 
$f\in L^{2}((  0,T)  \times\omega)  $ such that the solution of
(\ref{ncn3}) satisfies $q(  T,\cdot)  =0$ in $(  0,L)$. The proof of Theorem \ref{nullsysnon} is complete.\qed

With Theorem \ref{nullsysnon} at hand, one can prove Theorem \ref{thmC} about the regional controllability.  

\subsection{Proof of Theorem \ref{thmC}.}
By Theorem \ref{nullsysnon}, if $\delta$ is small enough one can find a control input $f\in L^2(0,T/2,L^2(0,L))$ with 
$\text{supp} (f) \subset (0,T)\times \omega$ such that the solution of 
\eqref{S1} satisfies $u(T/2,.)\equiv 0$ in $(0,L)$. Pick any number $l_2'\in (l_1',l_2)$ with $l_2'\not\in {\mathcal N}$. 
(This is possible,  the set $\mathcal N$ being discrete.)
By \cite[Theorem 1.3]{Rosier}, if $\delta$ is small enough 
one can pick a function $h\in L^2(T/2,T)$ such that the solution $y\in C^0([T/2,T],L^2(0,l_2'))\cap L^2(T/2,T,H^1(0,l_2'))$ of the system
\[
\left\{
\begin{array}{ll}
y_t+y_{xxx}+y_x +yy_x =0 \quad &\text{ in } (T/2,T)\times (0,l_2'),\\
y(t,0)=y(t,l_2')=0,\ \ y_x(t,l_2')=h(t) &\text{ in } (T/2,T),\\ 
y(T/2,x)=0 &\text{ in } (0,l_2')
\end{array}
\right.
\]
satisfies $y(T,x)=u_1(x)$ for $0<x<l_2'$. We pick a function $\mu\in C^\infty ( [0,L] ) $ such that 
\[
\mu (x) =\left\{ 
\begin{array}{ll}
1\quad &\text{ if } x<l_1',\\
0&\text{ if } x>\frac{l_1'+l_2'}{2}
\end{array}
\right. 
\]
and set for $T/2< t \le T$
\[
u(t,x)=
\left\{ 
\begin{array}{ll}
\mu(x) y(t,x)\quad &\text{ if } x<l_2',\\
0&\text{ if } x>l_2'.
\end{array}
\right.  
\]
Note that, for $T/2<t<T$, $u_t+u_{xxx}+u_x+uu_x=f$ with 
\[
f=\mu(\mu -1) yy_x+(\mu_{xxx} y + 3\mu_{xx}y_x + 3\mu _x y_{xx} + \mu _x y) + \mu \mu _x y^2.
\]
Since $||y||^4_{ L^4(0,T,L^4(0,l_2'))}\le C||y||^2_{L^\infty(0,T,L^2(0,L))} ||y||^2_{L^2(0,T,H^1(0,L))}$, it is clear that $f\in L^2(0,T,H^{-1}(0,L))$ with 
$\text{supp} (f)\subset (0,T)\times (l_1,l_2)$. Furthermore, 
$u\in C([0,T],L^2(0,L))\cap L^2(0,T,H^1(0,L))$ solves \eqref{S1} and satisfies \eqref{corS1}. \qed

\section{Exact controllability results\label{Sec3}}
\label{section4}
Pick any function $\rho \in C^\infty (0,L)$ with 
\be
\label{H4}
\rho (x)= \left\{
\begin{array}{ll}
0 \quad &\textrm{ if } \ 0<x<L-\nu ,\\
1 &\textrm{ if }\  L-\frac{\nu}{2}  <x<L,
\end{array} 
\right.  
\ee
for some $\nu \in (0,L)$.

This section is devoted to the investigation of  the exact controllability
of the system%
\begin{equation}
\left\{
\begin{array}
[c]{lll}%
u_{t}+u_{x}+uu_{x}+u_{xxx}= f = (\rho (x)h)_x  &  & \text{in }(
0,T)  \times(  0,L)  \text{,}\\
u(  t,0)  =u(  t,L)  =u_{x}(  t,L)  =0 &  &
\text{in }(  0,T)  \text{,}\\
u(  0,x)  =u_{0}(  x)  &  & \text{in }(
0,L)  \text{.}%
\end{array}
\right.  \label{ec1}%
\end{equation}
More precisely, we aim to find a control input $h\in L^{2}(  0,T;L^{2}( 0,L)  )  $ (actually, with $(\rho (x)h(t,x))_x$ in some space of functions) to 
guide the system described by (\ref{ec1}) in the time interval $[0,T]$ from any
(small) given initial state $u_{0}$ in $L^2_{\frac{1}{L-x} dx}$ to any (small) given terminal state $u_{T}$ in the same space. We first consider the linearized system,
and next proceed to the nonlinear one. The results involve some weighted Sobolev spaces.

\subsection{The linear system}
For any measurable function $w:(0,L)\to (0,+\infty )$ (not necessarily in $L^1(0,L)$), we introduce the weighted $L^2-$space
\[
L^2_{w(x)dx} =\{ u\in L^1_{loc}(0,L);\ \int_0^L u(x)^2 w(x)dx <\infty \}. 
\] 
It is a Hilbert space when endowed with the scalar product
\[
(u,v)_{L^2_{w(x)dx}}=\int_0^L u(x)v(x) w(x)dx. 
\]
We first prove the well-posedness of the linear system associated with
(\ref{ec1}), namely%
\begin{equation}
\left\{
\begin{array}
[c]{lll}%
u_{t}+u_{x}+u_{xxx}=0 &  & \text{in }(  0,T)  \times(
0,L)  \text{,}\\
u(  t,0)  =u(  t,L)  =u_{x}(  t,L)  =0 &  &
\text{in }(  0,T)  \text{,}\\
u(  0,x)  =u_{0}(  x)  &  & \text{in }(
0,L)  \text{,}%
\end{array}
\right.  \label{ec_lin}%
\end{equation}
in both  the spaces $L^2_{xdx}$ and $L^2_{\frac{1}{L-x}dx}$, following  \cite{goubet} where the well-posedness
was established in $L^2_{\frac{x}{L-x}dx}$. We need 
the following result.
\begin{theorem}
\label{goubet} 
(see \cite{goubet})
Let $W\subset V\subset H$ be three Hilbert spaces with continuous and dense embeddings. 
Let $a(v,w)$ be a bilinear form defined on $V\times W$ that satisfies the following properties:\\
(i) ({\bf Continuity})
\begin{equation}
\label{gou1}
a(v,w) \le M ||v||_V ||w||_W, \quad \forall v\in V,\ \forall w\in W;
\end{equation}
(ii) ({\bf Coercivity})
\begin{equation}
\label{gou2}
a(w,w) \ge m ||w||_V^2, \quad \forall w\in W;
\end{equation}
Then for all $f\in V'$ (the dual space of $V$), there exists $v\in V$ such that 
\begin{equation}
\label{gou3}
a(v,w)=f(w), \qquad \forall w\in W.
\end{equation}
If, in addition to (i) and (ii), $a(v,w)$ satisfies\\
(iii) ({\bf Regularity})
for all $g\in H$, any solution $v\in V$ of \eqref{gou3} with $f(w):=(g,w)_H$ belongs to $W$,\\ then 
\eqref{gou3} has a unique solution $v\in W$. Let $D(A)$ denote the set of those $v\in W$ when 
$g$ ranges over $H$, and set $Av= - g$. Then $A$ is a maximal dissipative operator, and hence it generates  a continuous semigroup of contractions
$(e^{tA})_{t\ge 0}$ in $H$.  
\end{theorem}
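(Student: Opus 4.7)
The plan is to prove the statement in three logical stages: the Lions-type existence principle, the construction of a well-defined operator $A$ via the regularity assumption, and finally the verification of maximal dissipativity, at which point the Lumer--Phillips theorem provides the semigroup for free.

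\emph{Stage 1 (existence in $V$).} Given $f\in V'$, I would produce $v\in V$ with $a(v,w)=f(w)$ for all $w\in W$ by the standard Hahn--Banach / Lions argument. Consider the linear map $\Lambda:W\to V'$, $w\mapsto a(\cdot,w)$. Coercivity (ii) gives $\|\Lambda w\|_{V'}\ge a(w,w)/\|w\|_V\ge m\|w\|_V$, so $\Lambda$ is injective. Because $W\hookrightarrow V$ continuously, we also have
\[
|f(w)|\le \|f\|_{V'}\|w\|_V\le m^{-1}\|f\|_{V'}\|\Lambda w\|_{V'},
\]
so $f$ descends to a continuous linear form on the subspace $\Lambda(W)\subset V'$. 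Extend it by Hahn--Banach to a continuous linear form on all of $V'$, and identify that extension with some $v\in V$ via the canonical isomorphism $V''\simeq V$ (valid since $V$ is Hilbert). Then $f(w)=\langle v,\Lambda w\rangle_{V,V'}=a(v,w)$ for every $w\in W$, giving the desired element.

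\emph{Stage 2 (uniqueness and definition of $A$).} Under (iii), uniqueness in $W$ follows by a standard argument: if $v_1,v_2\in V$ both solve \eqref{gou3} for $f(w)=(g,w)_H$, then $v:=v_1-v_2$ solves the homogeneous problem corresponding to $g=0\in H$; by (iii), $v\in W$, and testing against $w=v$ in the homogeneous identity together with (ii) gives $m\|v\|_V^2\le a(v,v)=0$. I then define
\[
D(A):=\{v\in W:\ \exists\,g\in H\text{ with } a(v,w)=(g,w)_H\ \forall w\in W\}, \qquad Av:=-g.
\]
The density of $W$ in $H$ makes $g$ uniquely determined by $v$, so $A$ is a well-defined linear operator on $H$.

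\emph{Stage 3 (maximal dissipativity).} Dissipativity is immediate: for $v\in D(A)$,
\[
(Av,v)_H=-(g,v)_H=-a(v,v)\le -m\|v\|_V^2\le 0.
\]
For the range condition $R(I-A)=H$, fix $g\in H$ and consider the perturbed bilinear form $\tilde a(v,w):=(v,w)_H+a(v,w)$ on $V\times W$. Using the continuous embeddings $V,W\hookrightarrow H$, $\tilde a$ inherits continuity from (i) and still satisfies the coercivity bound $\tilde a(w,w)\ge m\|w\|_V^2$; hence Stage 1 applied to $\tilde a$ produces $v\in V$ with $\tilde a(v,w)=(g,w)_H$ for all $w\in W$. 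Rewriting this as $a(v,w)=(g-v,w)_H$ with $g-v\in H$, hypothesis (iii) upgrades $v$ to an element of $W$, so $v\in D(A)$ and $(I-A)v=g$. The Lumer--Phillips theorem then gives the strongly continuous semigroup of contractions $(e^{tA})_{t\ge 0}$ on $H$.

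The only step with any subtlety is Stage 1, since the trial and test spaces differ and one cannot invoke Lax--Milgram directly; the rest is a routine assembly of well-known Hilbert-space arguments, and I expect no further obstacles.
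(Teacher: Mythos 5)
The paper does not prove this statement at all: it is quoted verbatim from the reference \cite{goubet} (Goubet--Shen) and used as a black box, so there is no in-paper proof to compare against. Your argument is the standard one for this Lions-type existence theorem and I find it sound. Stage 1 is the classical proof of Lions' generalization of Lax--Milgram (injectivity and lower bound for $\Lambda$ from coercivity, Hahn--Banach on $\Lambda(W)\subset V'$, reflexivity of $V$); Stage 2 correctly derives uniqueness by applying (iii) to the homogeneous problem and testing with $w=v$, and the density of $W$ in $H$ indeed makes $g\mapsto Av$ well defined; Stage 3 correctly verifies dissipativity and the range condition $R(I-A)=H$ by perturbing the form with $(v,w)_H$, which preserves (i) and (ii) and lets (iii) upgrade the solution to $D(A)$. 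The only point you should patch is the final invocation of Lumer--Phillips, which as usually stated requires $D(A)$ to be dense in $H$: this is automatic for a maximal dissipative operator on a Hilbert space (if $h\perp D(A)$, solve $(I-A)v=h$ and compute $0=(h,v)_H=\|v\|_H^2-(Av,v)_H\ge \|v\|_H^2$, so $v=0$ and $h=0$), but it deserves a sentence since nothing in the hypotheses gives it for free otherwise.
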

\subsection{Well-posedness in $L^2_{xdx}$}
\begin{theorem}
Let $A_1u=-u_{xxx}-u_x$ with domain 
\[ D(A_1)=\{ u\in H^2(0,L)\cap H^1_0(0,L); \ u_{xxx}\in L^2_{xdx}, \ u_x(L)=0\} \subset L^2_{xdx} .\]
 Then $A_1$ generates a
strongly continuous semigroup in $L^2_{xdx}$.   
\end{theorem}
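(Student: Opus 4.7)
The plan is to invoke Theorem \ref{goubet} with $H=L^2_{xdx}$, choosing a bilinear form associated with $\lambda I-A_1$ for some $\lambda>0$ large enough. A natural choice of triple is
\[
V=H^1_0(0,L),\qquad W=\bigl\{w\in H^2(0,L)\cap H^1_0(0,L):\ w_x(L)=0,\ \sqrt{x}\,w_{xxx}\in L^2(0,L)\bigr\},
\]
equipped with their natural norms, together with the bilinear form
\[
a(v,w):=\int_0^L(\lambda v+v_x+v_{xxx})\,w(x)\,x\,dx.
\]
For $v\in V$ this is to be interpreted via the integration by parts identities
\[
\int_0^L v_x\,w\,x\,dx=-\int_0^L v(w_xx+w)\,dx,\qquad \int_0^L v_{xxx}\,w\,x\,dx=\int_0^L v_x(w_{xx}x+2w_x)\,dx,
\]
all boundary contributions vanishing because $v(0)=v(L)=0$ (in $V$) and $w(0)=w(L)=w_x(L)=0$ (in $W$).

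I first check continuity (i) on $V\times W$: after the transfers above, Cauchy--Schwarz and the obvious bound $\|v\|_{L^2}\le C\|v\|_V$ give $|a(v,w)|\le C\|v\|_V\|w\|_W$. Coercivity (ii) follows from a direct computation using $w(0)=w(L)=w_x(L)=0$:
\[
a(w,w)=\lambda\int_0^L w^2x\,dx-\tfrac12\int_0^L w^2\,dx+\tfrac32\int_0^L w_x^2\,dx.
\]
The unfavorable negative term is absorbed via the splitting
\[
\int_0^L w^2\,dx\le\tfrac{\varepsilon^2}{2}\|w_x\|_{L^2}^2+\tfrac{1}{\varepsilon}\int_0^L w^2x\,dx,\qquad \varepsilon\in(0,L],
\]
which follows from $|w(x)|^2\le x\|w_x\|_{L^2}^2$ on $(0,\varepsilon)$ (valid since $w(0)=0$) combined with $1\le x/\varepsilon$ on $(\varepsilon,L)$. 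Picking $\varepsilon$ small enough that $\varepsilon^2/4<3/2$ and then $\lambda$ large enough to dominate $1/(2\varepsilon)$ yields $a(w,w)\ge c\|w\|_V^2$.

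For the regularity hypothesis (iii), let $v\in V$ satisfy $a(v,w)=(g,w)_H$ for all $w\in W$ with $g\in H$. Testing against $w\in\mathcal D(0,L)\subset W$ shows that $\lambda v+v_x+v_{xxx}=g$ in $\mathcal D'(0,L)$, and hence $v_{xxx}=g-\lambda v-v_x\in L^2_{xdx}$. Allowing test functions $w\in W$ whose support reaches $x=L$ will produce, after undoing the integration by parts, a surface contribution that vanishes only if $v_x(L)=0$; combined with the Cauchy data $v(L)=v_x(L)=0$ at the right endpoint one then integrates the ODE $v_{xxx}=g-\lambda v-v_x$ from $x=L$ back toward $0$ and upgrades $v$ to $H^2(0,L)$, so that $v\in W$. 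Theorem \ref{goubet} then produces a maximal dissipative operator $A$ on $H$ with $Av=-g=A_1v-\lambda v$, i.e.\ $A=A_1-\lambda I$ generates a contraction semigroup; after undoing the shift, $A_1$ itself generates a $C_0$-semigroup on $L^2_{xdx}$ of growth at most $e^{\lambda t}$. The identification of the constructed domain with the stated $D(A_1)$ is straightforward: $v\in W$ gives $v\in H^2\cap H^1_0$ with $v_x(L)=0$, while the equation $\lambda v+v_x+v_{xxx}=g\in H$ provides $v_{xxx}\in L^2_{xdx}$.

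I expect the regularity step~(iii) to be the main obstacle. The weight $x$ degenerates at the left endpoint, so the $V$-norm controls $v$ and $v_x$ only in the standard $L^2$ sense while $v_{xxx}$ is merely $L^2_{xdx}$, which near $x=0$ need not even be locally integrable; in particular $v\in H^2$ does not follow from a routine elliptic bootstrap and has to be extracted from a careful ODE/duality argument propagating the Cauchy data from $x=L$ and exploiting the precise weighted estimate on $v_{xxx}$, in the spirit of the scheme introduced in \cite{goubet}.
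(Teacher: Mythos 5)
Your overall strategy---applying Theorem \ref{goubet} with $H=L^2_{xdx}$ and $V=H^1_0(0,L)$, after shifting by $\lambda$ to gain coercivity---is exactly the paper's, and your coercivity argument (splitting $\int_0^L w^2\,dx$ at $x=\varepsilon$) is a perfectly acceptable alternative to the Poincar\'e/Hardy argument in the text. The gap is in your choice of $W$, and it breaks the regularity step (iii). You have built the condition $w_x(L)=0$ into the test space. But then every boundary contribution at $x=L$ that appears when you ``undo'' the integration by parts has the form $L\,v_x(L)\,w_x(L)$, which vanishes identically for $w\in W$ \emph{whatever} the value of $v_x(L)$; so, contrary to what you assert, testing against your $W$ can never force $v_x(L)=0$. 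Worse, hypothesis (iii) genuinely fails for your triple: let $\phi\not\equiv 0$ solve $\lambda\phi+\phi'+\phi'''=0$ with $\phi(0)=\phi(L)=0$ (such $\phi$ exists, the ODE having a three-dimensional solution space and only two conditions being imposed). Then $\phi\in V$ is smooth, and for every $w$ in your $W$ one computes $a(\phi,w)=\int_0^L(\lambda\phi+\phi'+\phi''')wx\,dx+L\,\phi'(L)\,w_x(L)=0$; moreover $\phi'(L)\neq0$, since otherwise $\phi\in W$ and coercivity would force $\phi=0$. Hence the homogeneous variational problem has nontrivial solutions in $V\setminus W$, uniqueness fails, and the operator $A$ is not defined by this construction.

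The paper avoids this precisely by taking $W=\{w\in H^1_0(0,L):\ xw_{xx}\in L^2(0,L)\}$ \emph{without} any condition at $x=L$, and by defining $a(v,w)=\int_0^L v_x[(xw)_{xx}+xw]\,dx$ directly (note that your integration-by-parts identity for $\int_0^L v_{xxx}wx\,dx$ is itself only valid when $w_x(L)=0$, so it cannot serve as the definition of the form once that constraint is dropped). With $w_x(L)$ free, the variational identity produces the boundary relation \eqref{Q4}, and the estimates \eqref{Q10}--\eqref{Q11}---which make precise your correct observation that near the degenerate endpoint $x=0$ the weighted bound on $v_{xxx}$ controls $v_{xx}$ only up to a logarithm---show that the right-hand side of \eqref{Q4} vanishes, yielding $v_x(L)=0$ and then $v\in H^2(0,L)$. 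Your sketch of the bootstrap from $x=L$ is in the right spirit, but this is exactly the step that must deliver the boundary condition $v_x(L)=0$, and with your choice of $W$ it cannot.
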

\begin{proof}
Let 
\[
H=L^2_{xdx}, \quad V=H^1_0(0,L),\quad W=\{ w\in H^1_0 (0,L),\ w_{xx}\in L^2_{x^2dx} \}, 
\]
be endowed with the respective norms
\[
||u||_H : = ||\sqrt{x} u||_{L^2(0,L)},\quad ||v||_V := ||v_x||_{L^2(0,L)}, \quad ||w||_W := ||xw_{xx}||_{L^2(0,L)}. 
\]
Clearly, $V\subset H$ with a continuous (dense) embedding between two Hilbert spaces. On the other hand, we have that 
\be
\label{Q1}
||w_x||_{L^2} \le  C ||xw_{xx}||_{L^2} \qquad \forall w\in W. 
\ee  
First, we note that we have for $w\in {\mathcal T} :=C^\infty( [0,L] )\cap H^1_0(0,L)$ and $p\in \R$
\[
0\le \int_0^L (xw_{xx}+pw_x) ^2dx = \int_0^L(x^2w^2_{xx} + 2px w_xw_{xx} + p^2 w_x^2) dx = \int_0^L x^2w_{xx}^2 dx + (p^2-p) \int_0^L w_x^2 dx + pLw_x^2(L).
\] 
Taking $p=1/2$ results in 
\begin{equation}
\label{poi}
\int_0^L w_x^2 dx \le 4\int_0^L x^2 w_{xx}^2 dx + 2L |w_x(L)|^2.
\end{equation} 
The estimate \eqref{poi} is also true for any $w\in W$, since $\mathcal T$ is dense in $W$.  Let us prove 
\eqref{Q1} by contradiction. If \eqref{Q1} is false, then there exists a sequence $\{ w^n \}_{n\ge 0}$ in $W$ such that 
\[
1 = ||w_x^n||_{L^2} \ge n ||x w^n_{xx}||_{L^2}\qquad \forall n\ge 0. 
\]
Extracting subsequences, we may assume  that
\begin{eqnarray*}
w^n&\to& w\quad \text{ in } H^1_0(0,L) \text{ weakly}\\
xw^n_{xx} &\to& 0\quad \text{ in } L^2(0,L) \text{ strongly}
\end{eqnarray*}
and hence $xw_{xx}=0$, which gives $w(x)=c_1x+c_2$. Since $w\in H^1_0(0,L)$, we infer that $w\equiv 0 $. 
Since $w^n$ is bounded in $H^2(L/2,L)$, extracting subsequences we may also assume that $w_x^n (L)$ converges in $\mathbb R$. 
We infer then from \eqref{poi} that $w^n$ is a Cauchy sequence  in $H^1_0(0,L)$, so that 
\[
w^n \to w\quad \text{ in } H^1_0(0,L) \text{ strongly},
\] 
and hence $||w_x||_{L^2} =\lim_{n\to\infty} ||w_x^n||_{L^2} =1$. This contradicts the fact that $w\equiv 0$. The proof of \eqref{Q1} is achieved. 

Thus $||\cdot ||_W$ is a norm in $W$, which is clearly a Hilbert space, and $W\subset V$ with continuous (dense) embedding. Let 
\[
a(v,w) = \int_0^L v_x [(xw)_{xx} + xw ] dx, \qquad v\in V, \ w\in W.
\] 
Let us check that (i), (ii), and (iii) in Theorem \ref{goubet} hold. For $v\in V$ and $w\in W$, 
\begin{eqnarray*}
|a(v,w)| &\le& 
||v_x||_{L^2} ||xw_{xx} + 2w_x + xw ||_{L^2} \\
&\le& ||v_x||_{L^2} \big( ||xw_{xx}||_{L^2} + C||w_x||_{L^2}\big) \\
&\le& C ||v||_V ||w||_W
\end{eqnarray*}
where we used Poincar\'e inequality and \eqref{Q1}. This proves that the bilinear form $a$ is well defined and continuous on $V\times W$. 
For (ii), we first pick any $w\in {\mathcal T}$ to obtain
\begin{eqnarray*}
a(w,w) &=& \int_0^L w_x ( xw_{xx} + 2w_x +xw) dx \\
&=& \frac{3}{2} \int_0^L w_x^2 dx + [x\frac{w_x^2}{2}]\vert _0^L -\frac{1}{2} \int_0^L w^2 dx\\
&\ge & \frac{3}{2} \int_0^L w_x^2 dx - \frac{1}{2} \int_0^L w^2 dx.
\end{eqnarray*}
By Poincar\'e inequality
\[
\int_0^L w^2(x) dx \le ( \frac{L}{\pi} )^2 \int_0^L w_x^2(x)dx,
\]
and hence
\[
a(w,w) \ge (\frac{3}{2} -\frac{L^2}{2\pi ^2})\int_0^L w_x^2 dx.  
\]
This shows the coercivity when $L< \pi \sqrt{3}$. When $L\ge \pi \sqrt{3}$, we have to consider, instead of $a$, the bilinear form 
$a_\lambda(v,w):=a(v,w) + \lambda (v,w)_H$ for $\lambda \gg 1$. Indeed, we have by Cauchy-Schwarz inequality and Hardy inequality
\begin{eqnarray*}
||w||^2_{L^2} &\le & ||x^{\frac{1}{2}} w||_{L^2} ||x^{-\frac{1}{2}} w||_{L^2} \\
&\le& \sqrt{L} ||w||_{H} ||x^{-1} w||_{L^2} \\
&\le& \varepsilon ||w_x||^2_{L^2} +C_\varepsilon ||w||^2_H
\end{eqnarray*}  
and hence
\[
a_\lambda (w,w) \ge (\frac{3}{2} -\frac{\varepsilon}{2}) ||w||^2_V + (\lambda -\frac{C_\varepsilon }{2}) ||w||^2_H.
\]
Therefore, if $\varepsilon <3$ and $\lambda>C_\varepsilon/2$, then $a_\lambda$ is a continuous bilinear form which is coercive. 

Let us have a look at the regularity issue. For given $g\in H$, let $v\in V$ be such that 
\[
a_\lambda(v,w) =(g,w)_H \qquad \forall w\in W,  
\]
i.e.
\be
\label{Q3}
\int_0^L v_x ((xw)_{xx} + xw)dx + \lambda \int_0^L v(x)w(x) x dx = \int_0^L g(x) w(x) xdx.
\ee
Picking any $w\in {\mathcal D}(0,L)$ results in 
\be
\label{Q33} 
\langle x(v_{xxx}+ v_x + \lambda v), w\rangle _{{\mathcal D}',{\mathcal D}}
=\langle xg, w\rangle _{{\mathcal D}',{\mathcal D}}
\qquad \forall w\in {\mathcal D} (0,L),  
\ee
and hence
\be
\label{Q2} 
v_{xxx} +v_x +\lambda v = g \qquad \text{ in } {\mathcal D} '(0,L). 
\ee
Since $v\in H^1_0(0,L)$ and $g\in L^2_{xdx}$, we have that $v\in H^3(\varepsilon , L)$ for all $\varepsilon \in (0,L)$ and 
$v_{xxx}\in L^2_{xdx}$. Picking any $w \in {\mathcal T}$ and $\varepsilon \in (0,L)$, and scaling in \eqref{Q2} by $xw$ yields
\[
\int_\varepsilon^L v_x((xw)_{xx} +xw)dx + [v_{xx} (xw) -v_x (xw)_x]\vert _\varepsilon ^L = \int_\varepsilon^L (g-\lambda v)xw dx. 
\]
Letting $\varepsilon \to 0$ and comparing with \eqref{Q3}, we obtain
\be
\label{Q4}
-Lv_x(L)w_x(L) =
\lim_{\varepsilon \to 0} \big( \varepsilon v_{xx}(\varepsilon ) w(\varepsilon ) -v_x (\varepsilon ) (w(\varepsilon ) + \varepsilon w_x(\varepsilon )) \big) .
\ee
Since $v_{xxx}\in L^2_{xdx}$, we obtain successively for some constant $C>0$ and all $\varepsilon \in (0,L)$ 
\ba
|v_{xx}(\varepsilon ) -v_{xx} (L) | &\le & (\int_\varepsilon ^L x  | v_{xxx} |^2 dx )^\frac{1}{2} (\int_\varepsilon ^L x^{-1}dx)^\frac{1}{2} \le C |\log \varepsilon | 
\label{Q10}\\ 
|v_x(\varepsilon) | &\le& C.   \label{Q11}
\ea 
We infer from \eqref{Q10} that $v\in H^2(0,L)$, and hence $v\in W$. Furthermore, letting $\varepsilon \to 0$ in \eqref{Q4} and using \eqref{Q10}-\eqref{Q11} yields
$v_x(L)=0$, since $w_x(L)$ was arbitrary. We conclude that $v\in {\mathcal D} (A_1)$. Conversely, it is clear that the operator 
$A_1-\lambda $ maps ${\mathcal D} (A_1)$ into $H$, and actually onto $H$ from the above computations. 
Hence $A_1-\lambda $ generates a strongly semigroup of contractions in $H$.  
\end{proof}
\subsection{Well-posedness in $L^2_{(L-x)^{-1}dx}$}
\begin{theorem} 
\label{thm30}
Let $A_2u=-u_{xxx}-u_x$ with domain 
\[ 
{\mathcal D}(A_2)=\{ u\in H^3(0,L)\cap H^1_0(0,L); \ u_{xxx}\in L^2_{ \frac{1}{L-x} dx} \text{ and }  \ u_x(L)=0 \} \subset L^2_{ \frac{1}{L-x} dx} .
\]
 Then $A_2$ generates a
strongly continuous semigroup in $L^2_{\frac{1}{L-x} dx}$.   
\end{theorem}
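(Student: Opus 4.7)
The plan is to apply Theorem \ref{goubet}, mimicking the argument just given for $A_1$ in $L^2_{x\,dx}$. Set
\[
H := L^2_{\frac{1}{L-x}\,dx}, \qquad V := H^1_0(0,L), \qquad W := \bigl\{ w \in V : \phi := w/(L-x) \in H^2(0,L) \cap H^1_0(0,L) \bigr\},
\]
endowed with the natural $H$-inner product, $\|v\|_V := \|v_x\|_{L^2}$, and $\|w\|_W := \|\phi_{xx}\|_{L^2}$. The continuous dense embedding $V \hookrightarrow H$ follows from Hardy's inequality at $x = L$: since $v(L) = 0$, $\int_0^L v^2/(L-x)^2 \,dx \le 4 \|v_x\|_{L^2}^2$, so $\|v\|_H^2 \le 4L \|v_x\|_{L^2}^2$. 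The continuous dense embedding $W \hookrightarrow V$ follows from the identity $w_x = -\phi + (L-x)\phi_x$ combined with Poincar\'e's inequality on $\phi \in H^1_0$ (yielding in fact $\|w_x\|_{L^2}^2 = \int_0^L (L-x)^2 \phi_x^2 \,dx \le L^2 \|\phi_x\|_{L^2}^2$).

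For the bilinear form I would take
\[
a(v, w) := \int_0^L v_x \Bigl[ \Bigl(\tfrac{w}{L-x}\Bigr)_{xx} + \tfrac{w}{L-x} \Bigr] \,dx,
\]
which is the analogue of $a(v,w) = \int v_x[(xw)_{xx} + xw]\,dx$ from the $A_1$ case, with the test function $xw$ replaced by $w/(L-x)$ to match the new weight. Continuity on $V \times W$ is immediate from Cauchy--Schwarz. For coercivity I would substitute $w = (L-x)\phi$ into $a(w, w)$ and integrate by parts twice using $\phi(0) = \phi(L) = 0$, producing
\[
a(w,w) \;=\; \tfrac{3}{2} \int_0^L \phi_x^2 \,dx \;-\; \tfrac{1}{2} \int_0^L \phi^2 \,dx \;-\; \tfrac{L}{2}\phi_x(0)^2,
\]
paralleling the identity $a(w,w) = \tfrac{3}{2}\|w_x\|_{L^2}^2 - \tfrac{1}{2}\|w\|_{L^2}^2$ used for $A_1$. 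After absorbing the boundary term $-\tfrac{L}{2}\phi_x(0)^2$ via an $\varepsilon$-Cauchy--Schwarz and an $H^2$-trace inequality, and applying Poincar\'e and Hardy exactly as in the $A_1$ proof, the shifted form $a_\lambda := a + \lambda(\cdot, \cdot)_H$ becomes coercive on $W$ (with respect to the $V$-norm) for $\lambda$ sufficiently large.

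For the regularity hypothesis (iii) of Theorem \ref{goubet}, let $g \in H$ and $v \in V$ solve $a_\lambda(v, w) = (g, w)_H$ for every $w \in W$. Choosing $w$ with $\phi \in C^\infty_c(0, L)$ yields $v_{xxx} + v_x + \lambda v = g$ in $\mathcal{D}'(0, L)$, whence $v \in H^3_{loc}(0, L)$. Reversing the integrations by parts in the variational identity, the freedom of the trace $\phi_x(L)$ forces the boundary condition $v_x(L) = 0$; once this is known, $v_x = O(L-x)$ near $x = L$, so $v_x \in L^2_{\frac{1}{L-x}\,dx}$, and consequently $v_{xxx} = g - v_x - \lambda v \in L^2_{\frac{1}{L-x}\,dx}$. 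A limit argument analogous to the estimates \eqref{Q10}--\eqref{Q11} of the $A_1$ proof then shows $v \in H^2(0, L)$ and ultimately $v \in W$, so Theorem \ref{goubet} gives that $A_2 - \lambda I$ is maximal dissipative and $A_2$ generates a $C_0$-semigroup on $H$.

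The hard part is the asymmetric treatment of the endpoints dictated by the weight $1/(L-x)$, which is singular at $x = L$ rather than vanishing at $x = 0$ as in the $A_1$ case. This produces the extra boundary contribution $-\tfrac{L}{2}\phi_x(0)^2$ in the coercivity identity (absent in the $A_1$ computation) and requires one to verify that the domain produced by the variational construction matches the explicit $D(A_2)$ announced in the theorem: this may necessitate adding a boundary term to $a$ or imposing the extra condition $\phi_x(0) = 0$ in $W$ and checking its consistency with the stated $D(A_2)$.
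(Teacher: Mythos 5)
Your overall strategy (the variational setup of Theorem \ref{goubet} with the bilinear form $a(v,w)=\int_0^L v_x[(\tfrac{w}{L-x})_{xx}+\tfrac{w}{L-x}]dx$) is the same as the paper's, and your algebra for $a(w,w)$ in the variable $\phi=w/(L-x)$ is correct. But there is a genuine gap in the coercivity step. Your space $W$ forces $\phi(0)=\phi(L)=0$ (i.e.\ $w_x(L)=0$) but leaves $\phi_x(0)=w_x(0)/L$ free, so the identity $a(w,w)=\tfrac32\|\phi_x\|^2_{L^2}-\tfrac12\|\phi\|^2_{L^2}-\tfrac{L}{2}\phi_x(0)^2$ contains a negative boundary term that cannot be absorbed: the trace bound $\phi_x(0)^2\le \eps\|\phi_{xx}\|^2_{L^2}+C_\eps\|\phi_x\|^2_{L^2}$ calls on the $W$-norm, which does not appear on the coercive side of \eqref{gou2}, and no bound of $\phi_x(0)^2$ by $\|\phi_x\|^2_{L^2}+\lambda\|w\|_H^2$ exists (take $\phi$ supported in $[0,1/n]$ with $\phi_x(0)=n$ to see $a_\lambda(w,w)/\|w\|_V^2\to-\infty$). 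The paper kills this term at the root by taking $W=H^2_0(0,L)$, so that $w_x(0)=0$ and no boundary contribution arises; it also takes $V=\{u\in H^1_0:\ u_x\in L^2_{(L-x)^{-2}dx}\}$ with the weighted norm $\|(L-x)^{-1}u_x\|_{L^2}$, which is what the Hardy-type inequality \eqref{P1} from \cite{goubet} naturally controls.

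The second, related gap is your reliance on hypothesis (iii) of Theorem \ref{goubet}: you cannot conclude ``ultimately $v\in W$,'' because any $W$ small enough to make $a$ coercive must suppress the trace $w_x(0)$, whereas elements of ${\mathcal D}(A_2)$ have no condition on $u_x(0)$; so the resolvent solution is generically \emph{not} in $W$, and (iii) fails. You sense this tension in your last paragraph but do not resolve it. The paper's resolution is to use Theorem \ref{goubet} only partially --- for the existence of a variational solution $v\in V$ of $a_\lambda(v,w)=(g,w)_H$ --- and then to work by hand: testing against $w\in{\mathcal D}(0,L)$ gives the equation, testing against $w=x^2(L-x)^2\overline{w}$ extracts $v_x(L)=0$, and Hardy \eqref{P2P} applied twice gives $v_{xxx}\in H$, hence $v\in{\mathcal D}(A_2)$ and $A_2-\lambda$ is onto. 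Dissipativity of $A_2-\lambda$ is then proved by a separate integration by parts on ${\mathcal D}(A_2)$ (where the boundary term $-w_x(0)^2/(2L)$ appears with the \emph{favorable} sign), and Lumer--Phillips concludes. A minor further slip: $v\in H^3$ with $v_x(L)=0$ gives $v_x=O((L-x)^{1/2})$, not $O(L-x)$; this still suffices for $v_x\in L^2_{\frac{1}{L-x}dx}$, but the clean route is the Hardy inequality rather than a pointwise decay rate.
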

\begin{proof}
We will use Hille-Yosida theorem, and (partially) Theorem \ref{goubet}. Let 
\be
\label{AAA1}
H=L^2_{ \frac{1}{L-x} dx}, \quad V=\{u\in H^1_0(0,L), \ u_x\in L^2_{\frac{1}{ (L-x)^2} dx} \}, \quad W=H^2_0(0,L),  
\ee
be endowed respectively with the norms 
\be
\label{AAA2}
||u||_H = || (L-x)^{-\frac{1}{2}} u||_{L^2}, \quad ||u||_V  = || (L-x)^{-1} u_x ||_{L^2}, \quad || u ||_W = ||u_{xx}||_{L^2}.  
\ee
From \cite{goubet}, we know that $V$ endowed with $||\cdot||_V$ is a Hilbert space, and that 
\be
||(L-x)^{-2} u||_{L^2} \le \frac{2}{3} || (L-x)^{-1} u_x ||_{L^2} \qquad \forall u\in V,
\label{P1}
\ee
and hence
\be
\label{P2}
||u||_H \le (\int_0^L \frac{L^3}{ (L-x)^4 }u^2(x)dx )^\frac{1}{2} \le \frac{2}{3} L^\frac{3}{2} ||u||_V \qquad \forall u\in V. 
\ee
Thus $V\subset H$ with continuous embedding. From Poincar\'e inequality, we have that $|| \cdot ||_W$ is a norm
on $W$ equivalent to the $H^2-$norm. On the other hand, from Hardy inequality
\be
\label{P2P}
\int_0^L \frac{v^2}{(L-x)^2} dx \le C \int_0^L v_x ^2 dx \quad \forall v\in H^1(0,L)\text{ with } v(L)=0,
\ee   
we have that 
\begin{equation}
\label{P2P2}
||v||_V \le C||v||_W\qquad \forall v \in W.
\end{equation}
Thus $W\subset V$ with continuous embedding. It is easily  seen that ${\mathcal D} (0,L)$ is dense in $H$, $V$, and $W$. Let 
\[
a(v,w) = \int_0^L [v_x ( \frac{w}{L-x} )_{xx} + v_x\frac{w}{L-x} ]dx\qquad (v,w)\in V\times W. 
\] 
Then
\begin{eqnarray*}
|a(v,w)| 
&\le &     \vert \int_0^L v_x (\frac{w_{xx} }{L-x} + 2\frac{w_x}{ (L-x)^2 }  + 2\frac{w}{ (L-x)^3} +\frac{w}{L-x} )dx \vert \\
&\le & ||w_{xx}||_{L^2} ||\frac{v_x}{L-x}||_{L^2} + 2|| \frac{w_x}{L-x} ||_{L^2} || \frac{v_x}{L-x} ||_{L^2} 
+|| \frac{v_x}{L-x} ||_{L^2} \big( 2  ||\frac{w}{ (L-x)^2} ||_{L^2} + ||w||_{L^2} \big) \\
&\le& C ||v||_V ||w||_W
\end{eqnarray*}
by \eqref{P1}, \eqref{P2}, and \eqref{P2P2}. 
This shows that $a$ is well defined and continuous. Let us look at the coercivity of $a$. Pick any $w\in {\mathcal D} (0,L)$. Then 
\begin{eqnarray*}
a(w,w) &=& \int_0^L w_x \big( \frac{ w_{xx} }{L-x} + 2\frac{w_x}{ (L-x)^2 } + 2\frac{w}{(L-x)^3} + \frac{w}{L-x}  \big) dx  \\
&=& \frac{3}{2} \int_0^L \frac{w^2_x}{(L-x)^2} dx -3 \int_0^L \frac{w^2}{(L-x)^4} dx - \frac{1}{2} \int_0^L \frac{w^2}{(L-x)^2} dx\\
&\ge&  \frac{1}{6}\int_0^L \frac{w_x^2}{ (L-x)^2 } dx -\frac{1}{2} \int_0^L \frac{w^2 }{ (L-x)^2} dx
\end{eqnarray*} 
where we used \eqref{P1} for the last line. Note that, using Cauchy-Schwarz inequality and \eqref{P1}, we have that 
\ba
||\frac{w}{L-x}||^2_{L^2} 
&\le& || (L-x)^{-\frac{1}{2}}  w||_{L^2}   || (L-x)^{ -\frac{3}{2}} w ||_{L^2} \nonumber \\
&\le& \frac{2 \sqrt{L} }{3} || w ||_H || w ||_V \nonumber \\
&\le & \varepsilon ||w||_V^2 + \frac{L}{9\varepsilon } ||w||_H^2.
\ea 
If we pick $\varepsilon \in (0,1/3)$, we infer that for all $w \in {\mathcal D} (0,L)$ 
\be
a(w,w) + \frac{L}{18\varepsilon } || w ||^2_H \ge \big( \frac{1}{6} - \frac{\varepsilon}{2} \big)  || w ||^2_V \ge C || w||^2_V.  
\label{AAA10}
\ee
The result is also true for any $w\in W$, by density. This shows that the continuous bilinear form 
\[
a_\lambda (v,w) = a(v,w) + \lambda (v,w)_H
\]  
is coercive for $\lambda > L/6$. Let $g\in H$ be given. By Theorem \ref{goubet}, there is at least one solution $v\in V$ of 
\be
\label{P8}
a_\lambda (v,w) =(g,w)_H \qquad \forall w\in W. 
\ee  
Pick such a solution $v\in V$, and let us prove that $v\in {\mathcal D} (A_2)$. Picking any $w\in {\mathcal D} (0,L)$ in \eqref{P8} yields
\be
\label{P10}
v_{xxx} + v_x + \lambda v=g \qquad \text{ in } {\mathcal D}'(0,L). 
\ee 
As $g\in L^2(0,L)$ and $v\in H^1(0,L)$, we have that $v_{xxx}\in L^2(0,L)$, and $v\in H^3(0,L)$. Pick finally $w$ of the form 
$w(x)=x^2 (L-x)^2 \overline{w} (x)$, where 
$\overline{w}\in C^\infty ([0,L])$ is arbitrary chosen. Note that $w\in W$ and that $w/(L-x) \in H^1_0(0,L)\cap C^\infty ( [0,L] )$. Multiplying in 
\eqref{P10} by $w/(L-x)$ and integrating over $(0,L)$, we obtain after comparing with \eqref{P8}  
\[
0=-v_x (\frac{w}{L-x})_x \vert _0^L = -v_x \big( (2xL-3x^2)
\overline{w} + x^2 (L-x)\overline{w}_x \big)\vert _0^L = v_x(L)L^2\overline{w}(L). \]
As $\overline{w} (L)$ can be chosen arbitrarily, we conclude that $v_x(L)=0$. Using \eqref{P2P} twice, we infer that $v_x+\lambda v\in H$, and hence 
$v_{xxx}=g - (v_x+\lambda v)\in H$. 
Therefore
 $v\in {\mathcal D} (A_2)$. Thus, for $\lambda >L/6$ we have that $A_2-\lambda: {\mathcal D} (A_2) \to H$ is onto.
Let us check that $A_2-\lambda$ is also dissipative in $H$. Pick any $w\in {\mathcal D} (A_2)$. Then we obtain after some integrations by parts that 
 \[
 (A_2 w , w)_H = -\frac{3}{2} \int_0^L \frac{w_x^2}{ (L-x)^2 } dx + 3 \int_0^L \frac{w^2}{ (L-x)^4 } dx +\frac{1}{2} \int_0^L \frac{ w^2 }{ (L-x)^2 } dx 
 -\frac{ w_x^2(0) }{2L}
 \] 
 and
 \[
 (A_2 w-\lambda w, w)_H \le -(\frac{1}{6} -\frac{\varepsilon}{2}) ||w||_V^2 -\frac{w_x^2(0)}{2L} \le 0
 \]
 for $\varepsilon <1/3$ and $\lambda =L/(18\varepsilon)$.  We conclude that $A_2-\lambda $ is maximal dissipative for 
 $\lambda >L/6$, and thus it generates a strongly continuous semigroup of contractions in  $H$ by Hille-Yosida theorem. 
\end{proof}
A global Kato smoothing effect as in \cite{goubet,Rosier} can as well be derived.
\begin{proposition}
\label{prop40}
Let $H$ and $V$ be as in \eqref{AAA1}-\eqref{AAA2}, and let $T>0$ be given. Then there exists some constant $C=C(L,T)$ such that for any $u_0\in H$, the solution 
$u(t)=e^{tA_2}u_0$ of \eqref{ec_lin} satisfies
\be
\label{AAA3}
||u||_{L^\infty(0,T,H)} + ||u||_{L^2(0,T,V)} \le C ||u_0||_H. 
\ee 
\end{proposition}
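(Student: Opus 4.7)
\medskip
\noindent
\textbf{Proof proposal for Proposition \ref{prop40}.}
The plan is to extract the estimate \eqref{AAA3} directly from the dissipativity computation already carried out in the proof of Theorem \ref{thm30}. There, it is shown that for every $w\in\mathcal{D}(A_2)$ one has
\[
(A_2 w,w)_H = -\frac{3}{2}\int_0^L\frac{w_x^2}{(L-x)^2}dx + 3\int_0^L\frac{w^2}{(L-x)^4}dx + \frac{1}{2}\int_0^L\frac{w^2}{(L-x)^2}dx -\frac{w_x^2(0)}{2L}.
\]
Combined with Hardy's inequality \eqref{P1} and the Cauchy--Schwarz bound used to derive \eqref{AAA10}, this yields the key dissipative estimate
\[
(A_2 w,w)_H \le \lambda \|w\|_H^2 - \kappa \|w\|_V^2 - \frac{w_x^2(0)}{2L}, \qquad \forall w\in\mathcal{D}(A_2),
\]
for constants $\lambda=\lambda(L)>0$ and $\kappa=\kappa(L)>0$ (concretely, $\kappa=\tfrac16-\tfrac{\varepsilon}{2}$ with $\varepsilon\in(0,1/3)$ and $\lambda=L/(18\varepsilon)$).

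Take first $u_0\in\mathcal{D}(A_2)$, so that $u(t)=e^{tA_2}u_0\in C^1([0,T];H)\cap C([0,T];\mathcal{D}(A_2))$. Then $\tfrac12\tfrac{d}{dt}\|u(t)\|_H^2=(A_2u(t),u(t))_H$, and the dissipative inequality above gives
\[
\frac{1}{2}\frac{d}{dt}\|u(t)\|_H^2 + \kappa \|u(t)\|_V^2 \le \lambda \|u(t)\|_H^2.
\]
By Gr\"onwall's lemma applied to $\|u(t)\|_H^2$, we obtain $\|u(t)\|_H\le e^{\lambda t}\|u_0\|_H$, and hence
\[
\|u\|_{L^\infty(0,T;H)} \le e^{\lambda T}\|u_0\|_H.
\]
Integrating the differential inequality over $(0,T)$ and inserting this bound yields
\[
\kappa \int_0^T\|u(t)\|_V^2\,dt \le \frac{1}{2}\|u_0\|_H^2 + \lambda \int_0^T e^{2\lambda t}dt\,\|u_0\|_H^2 \le C(L,T)\|u_0\|_H^2,
\]
which is exactly \eqref{AAA3} with an explicit constant $C=C(L,T)$.

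The passage from $u_0\in\mathcal{D}(A_2)$ to $u_0\in H$ is done by density: $\mathcal{D}(A_2)$ is dense in $H$ since $A_2$ generates a $C_0$-semigroup, and the linear map $u_0\mapsto u$ is a contraction (up to the factor $e^{\lambda T}$) for the $L^\infty(0,T;H)$ norm, so both sides of \eqref{AAA3} depend continuously on $u_0$ in the topology of $H$. No part of the argument is really an obstacle; the only point requiring care is the justification of the integration by parts leading to the dissipative identity for $(A_2 w,w)_H$, but this has already been performed in the proof of Theorem \ref{thm30} (using $w(0)=w(L)=w_x(L)=0$ and $w_{xxx}\in L^2_{(L-x)^{-1}dx}$ to absorb the boundary contributions at $x=L$). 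Thus the smoothing estimate follows with essentially no new computation.
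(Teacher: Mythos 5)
Your proof is correct and follows essentially the same route as the paper: reduce to $u_0\in\mathcal{D}(A_2)$ by density, obtain the $L^\infty(0,T;H)$ bound from the (shifted) contraction property, and get the $L^2(0,T;V)$ bound by pairing $u_t=A_2u$ with $u$ in $H$, invoking the coercivity estimate \eqref{AAA10}, and integrating in time. The only cosmetic difference is that you rederive the $L^\infty$ bound via Gr\"onwall from the differential inequality rather than quoting semigroup theory directly.
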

\begin{proof}
We proceed as in \cite{goubet}. First, we notice that ${\mathcal D}(A_2)$ is dense in $H$, so that it is sufficient to prove the result when $u_0\in {\mathcal D}(A_2)$.
Note that the estimate  $||u||_{L^\infty(0,T,H)} \le C ||u_0||_H$ is a consequence of classical semigroup theory.
 Assume $u_0\in {\mathcal D}(A_2)$, so that $u_t=A_2u$ in the classical sense. Taking the inner product in $H$ with $u$ yields 
\[ (u_t,u)_H=-a(u,u) \le -C ||u||_V^2 + \frac{L}{18\varepsilon} ||u||_H^2 \]
where we used \eqref{AAA10}. An integration over $(0,T)$ completes the proof of the estimate of $||u||_{ L^2(0,T,V) } $. 
\end{proof}
 
\subsection{Non-homogeneous system}
In this section we consider the nonhomogeneous system
\ba
u_{t}+u_{x}+u_{xxx}=f(  t,x)  &  & \text{in }(  0,T)
\times(  0,L), \label{H5}\\
u(  t,0)  =u(  t,L)  =u_{x}(  t,L)  =0 &  &
\text{in }(  0,T),\label{H6}\\
u(  0,x)  =u_0 &  & \text{in }(  0,L). \label{H7}%
\ea
We need the prove the existence of a ``reasonable'' solution when solely $f\in L^2(0,T,H^{-1}(0,L))$. 

\begin{proposition}
\label{prop20}
Let $u_0\in L^2_{xdx}$ and $f\in L^{2}(  0,T;H^{-1}(  0,L)  )  $. Then there exists a unique solution $u\in C([0,T],L^2_{xdx} )\cap L^2(0,T,H^1(0,L))$ 
to \eqref{H5}-\eqref{H7}. Furthermore, there is some constant $C>0$ such that 
\be
\label{H700}
||u||_{L^\infty (0,T,L^2_{xdx})} + ||u||_{L^2(0,T,H^1(0,L))} \le C\big( ||u_0||_{L^2_{xdx}} + || f ||_{ L^2 (0,T,H^{-1}(0,L) } \big) . 
\ee
\end{proposition}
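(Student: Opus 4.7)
The strategy is to split $u = u^{(1)} + u^{(2)}$, where $u^{(1)}(t) = e^{tA_1}u_0$ is the mild solution of the homogeneous equation given by the semigroup on $L^2_{xdx}$ just constructed, and $u^{(2)}$ solves \eqref{H5}--\eqref{H7} with $u_0$ replaced by $0$. For $u^{(1)}$, a Kato-type smoothing estimate analogous to Proposition \ref{prop40} (multiply the equation by $xu^{(1)}$ and integrate) gives $u^{(1)} \in C^0([0,T]; L^2_{xdx}) \cap L^2(0,T; H^1(0,L))$. For $u^{(2)}$ I would approximate $f$ by $f^n \in L^2(0,T; L^2(0,L))$ with $f^n \to f$ in $L^2(0,T; H^{-1}(0,L))$, obtain classical solutions $u^{(2),n}$ by Proposition \ref{linkdv} and Duhamel (noting $L^2(0,L) \subset L^2_{xdx}$), derive a uniform weighted estimate, and pass to the limit.

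The heart of the proof is the weighted energy identity for smooth data, obtained by multiplying the equation by $xu$ and integrating, using $u(t,0)=u(t,L)=u_x(t,L)=0$ together with the cancellations $[xu^2]_0^L = [xu_x^2]_0^L = 0$:
\begin{equation*}
\frac{1}{2}\frac{d}{dt}\|u\|_{L^2_{xdx}}^2 + \frac{3}{2}\|u_x\|_{L^2}^2 - \frac{1}{2}\|u\|_{L^2}^2 = \langle f, xu \rangle_{H^{-1}, H^1_0}.
\end{equation*}
The key computations are $\int_0^L xuu_x\,dx = -\tfrac{1}{2}\|u\|_{L^2}^2$ and $\int_0^L xuu_{xxx}\,dx = \tfrac{3}{2}\|u_x\|_{L^2}^2$. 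The right-hand side is controlled by observing $xu \in H^1_0(0,L)$ with $\|xu\|_{H^1_0} \le C\|u_x\|_{L^2}$ (Poincaré, since $(xu)(0)=(xu)(L)=0$ and $(xu)_x = u + xu_x$), so it is bounded by $\varepsilon\|u_x\|_{L^2}^2 + C_\varepsilon \|f\|_{H^{-1}}^2$.

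To absorb the sign-indefinite term $-\tfrac{1}{2}\|u\|_{L^2}^2$ into the dissipation modulo a multiple of $\|u\|_{L^2_{xdx}}^2$, I would split $(0,L) = (0,\eta) \cup (\eta, L)$: on $(\eta, L)$, one has $\int_\eta^L u^2\,dx \le \eta^{-1}\|u\|_{L^2_{xdx}}^2$; on $(0,\eta)$, the Hardy-type bound $|u(x)|^2 \le x\|u_x\|_{L^2}^2$ (valid because $u(0)=0$) yields $\int_0^\eta u^2\,dx \le (\eta^2/2)\|u_x\|_{L^2}^2$. Choosing $\eta$ and $\varepsilon$ small enough and applying Grönwall's inequality then gives the uniform bound
\begin{equation*}
\|u\|_{L^\infty(0,T; L^2_{xdx})}^2 + \|u\|_{L^2(0,T; H^1(0,L))}^2 \le C\bigl(\|u_0\|_{L^2_{xdx}}^2 + \|f\|_{L^2(0,T; H^{-1}(0,L))}^2\bigr).
\end{equation*}

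The uniform estimates allow extraction of a weak limit $u$ of $u^{(2),n}$; the bound on $u_t$ in $L^2(0,T; H^{-2}(0,L))$ read off the equation, combined with an Aubin--Lions argument, upgrades the weak convergence to $C^0([0,T]; L^2_{xdx})$. Uniqueness follows from the same identity applied to the difference of two solutions with identical data. The main obstacle is the sign-indefinite lower-order term $-\tfrac{1}{2}\|u\|_{L^2}^2$: because the weight $x$ degenerates at $0$, it cannot be absorbed directly into $\|u\|_{L^2_{xdx}}^2$, and the Hardy-type split above is the device that closes the argument.
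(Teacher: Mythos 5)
Your proposal is correct and follows essentially the same route as the paper: the weighted energy identity obtained by multiplying by $xu$ (with the same constants $\tfrac32\|u_x\|^2_{L^2}$ and $-\tfrac12\|u\|^2_{L^2}$), the $\varepsilon\|u_x\|^2_{L^2}+C_\varepsilon\|f\|^2_{H^{-1}}$ bound on the duality pairing, the splitting of $(0,L)$ at a small $\eta$ to absorb the sign-indefinite term (the paper splits at $\sqrt{\varepsilon}$ with exactly your two estimates), Gr\"onwall, and a density/limiting argument. The only cosmetic difference is your preliminary decomposition $u=u^{(1)}+u^{(2)}$, which the paper avoids by running the single estimate with both $u_0$ and $f$ present.
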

\begin{proof}
Assume first that $u_0\in {\mathcal D}(A_1)$ and $f\in C^0([0,T],{\mathcal D} (A_1))$ to legitimate the following computations. Multiplying each term in \eqref{H5} by 
$xu$ and integrating over $(0,\tau ) \times (0,L)$ where $0<\tau <T$ yields
\be
\frac{1}{2}\int_0^L x|u(\tau ,x)|^2 dx -\frac{1}{2}\int_0^L x|u_0(x)|^2 dx +\frac{3}{2}\int_0^\tau \!\!\!\int_0^L |u_x|^2 dxdt 
-\frac{1}{2}\int_0^\tau \!\!\!\int_0^L |u|^2 dx dt = \int_0^\tau\!\!\!\int_0^L xuf dxdt. \label{H8} 
\ee
$\langle .,. \rangle _{H^{-1},H^1_0}$ denoting the duality pairing between $H^{-1}(0,L)$ and $H^1_0(0,L)$, we have that for all $\varepsilon >0$ 
\be
\label{H9}
\int_0^\tau\!\!\!\int_0^L xuf  dxdt  = \int_0^\tau \langle f, xu\rangle _{ H^{-1},H^1_0 } \le \frac{\varepsilon}{2} \int_0^\tau \!\!\!\int_0^L u_x^2 dxdt + 
C_\varepsilon \int_0^\tau || f ||^2_{H^{-1}} dt. 
\ee  
The last term in the l.h.s. of \eqref{H8} is decomposed as
\[
\frac{1}{2}\int_0^\tau \!\!\!\int_0^L |u|^2 dxdt = \frac{1}{2}\int_0^\tau\!\!\!\int_0^{\sqrt{\varepsilon}}  |u|^2 dxdt  + 
\frac{1}{2}\int_0^\tau \!\!\! \int_{\sqrt{\varepsilon}}^L |u|^2 dxdt =:I_1 + I_2.
\]
We claim that 
\ba
I_1 &\le & \frac{\varepsilon}{2} \int_0^\tau \!\!\!\int_0^L |u_x|^2 dxdt, \label{H11}\\
I_2 &\le &  \frac{1}{2\sqrt{\varepsilon} } \int_0^\tau \!\!\!\int_0^L  x |u|^2 dxdt. \label{H12}
\ea
For \eqref{H11}, since $u(0,t)=0$ we have that for $(t,x)\in (0,T)\times (0, \sqrt{\varepsilon} )$ 
\[
|u(x,t)|\le \int_0^{\sqrt{\varepsilon}} |u_x| dx \le \varepsilon ^\frac{1}{4} \big( \int_0^{\sqrt{\varepsilon}}  |u_x|^2 dx\big) ^\frac{1}{2} 
\]
and hence 
\[
\int_0^{\sqrt{\varepsilon}} |u|^2 dx \le \varepsilon \int_0^{\sqrt{\varepsilon}} |u_x|^2 dx
\]
which gives \eqref{H11} after integrating over $t\in (0,\tau )$. \eqref{H12} is obvious.

Gathering together \eqref{H8}-\eqref{H12}, we obtain 
\begin{multline*}
\frac{1}{2} \int_0^L x|u(\tau , x)|^2 dx  + (\frac{3}{2}-\varepsilon ) \int_0^\tau \!\!\!\int_0^L |u_x|^2 dxdt \\
\le \frac{1}{2} \int_0^L x|u_0(x)|^2 dx + \frac{1}{2\sqrt{\varepsilon}} \int _0^\tau\!\!\!\int_0^L x|u|^2 dxdt + C_\varepsilon \int_0^\tau || f ||^2_{H^{-1}}dt.
\end{multline*}
Letting $\varepsilon = 1$ and applying Gronwall's lemma, we obtain 
\[
||u||^2_{L^\infty (0,T,L^2_{xdx}) } + ||u_x||^2_{L^2(0,T,L^2(0,L)) } \le 
C(T) \big( ||u_0||^2_{L^2_{xdx}} +||f||^2_{L^2(0,T,H^{-1}(0,L))} \big)  .
\]
This gives \eqref{H700} for $u_0\in D(A_1)$ and $f\in C^0([0,T],D(A_1))$. A density argument allows us to construct a solution
$u\in C([0,T],L^2_{xdx}) \cap L^2(0,T,H^1(0,L))$ of \eqref{H5}-\eqref{H7} satisfying \eqref{H700}  for $u_0\in L^2_{xdx}$ and $f\in L^2(0,T,H^{-1}(0,L))$.
The uniqueness follows from classical semigroup theory.  
\end{proof}

Our goal now is to obtain a similar result in the spaces $H$ and $V$ introduced in \eqref{AAA1}-\eqref{AAA2}. To do that, we limit ourselves to the situation 
when $f=(\rho (x)h)_x$ with $h\in L^2(0,T,L^2(0,L))$.  

\begin{proposition}
\label{prop45} 
Let $u_0\in H$ and $h\in L^2(0,T,L^2(0,L))$, and set $f:=(\rho (x)h)_x$.  Then there exists a unique solution $u\in C([0,T],H)\cap L^2(0,T,V)$ 
to \eqref{H5}-\eqref{H7}. Furthermore, there is some constant $C>0$ such that
\be
\label{BBB}
||u||_{L^\infty (0,T,H)} + ||u||_{L^2(0,T,V ) } \le C\big( ||u_0||_{H} + || h ||_{ L^2 (0,T,L^2(0,L))  } \big) . 
\ee
\end{proposition}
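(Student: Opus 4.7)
The plan is to prove uniqueness by a standard dissipative argument and establish existence together with the estimate \eqref{BBB} via a weighted energy estimate using the multiplier $u/(L-x)$. Uniqueness is immediate: if $u^1,u^2$ are two solutions in $C([0,T],H)\cap L^2(0,T,V)$ sharing the same data, then $u=u^1-u^2$ satisfies the homogeneous equation with $u_0=0$, and by the computation in the proof of Theorem \ref{thm30} together with Hardy's inequality \eqref{P1} one obtains $\frac{1}{2}\frac{d}{dt}\|u\|_H^2+C\|u\|_V^2\le \lambda\|u\|_H^2$, hence $u\equiv 0$ by Gronwall.

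For existence, I would first regularize: approximate $u_0\in H$ by a sequence $u_0^n\in\mathcal D(A_2)$ and $h$ by smooth $h^n$ so that the resulting source $f^n=(\rho h^n)_x$ is regular enough to yield a classical solution $u^n$ (whose existence and $C([0,T],L^2_{xdx})\cap L^2(0,T,H^1(0,L))$ regularity is guaranteed by Proposition \ref{prop20}, since $(\rho h)_x\in L^2(0,T,H^{-1}(0,L))$). I would then multiply the equation by $u^n/(L-x)$ and integrate over $(0,L)$. The time derivative produces $\frac{1}{2}\frac{d}{dt}\|u^n\|_H^2$, while the identity used in the dissipativity argument of Theorem \ref{thm30} gives
\[
\int_0^L (u^n_x+u^n_{xxx})\frac{u^n}{L-x}\,dx\ge C\|u^n\|_V^2-\lambda\|u^n\|_H^2+\frac{|u^n_x(0)|^2}{2L}.
\]
For the forcing term I would integrate by parts:
\[
\int_0^L (\rho h^n)_x\frac{u^n}{L-x}\,dx=\Bigl[\frac{\rho h^n u^n}{L-x}\Bigr]_0^L-\int_0^L \rho h^n\Bigl(\frac{u^n_x}{L-x}+\frac{u^n}{(L-x)^2}\Bigr)\,dx,
\]
where the boundary term at $x=0$ vanishes since $\rho(0)=0$, and the boundary term at $x=L$ vanishes since the conditions $u^n(L)=u^n_x(L)=0$ force $u^n(x)=O((L-x)^2)$ near $L$. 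Cauchy--Schwarz combined with the Hardy inequality \eqref{P1} (applied to $\|u^n/(L-x)^2\|_{L^2}$) then yields the bound $\varepsilon\|u^n\|_V^2+C_\varepsilon\|h^n\|_{L^2}^2$. Choosing $\varepsilon$ small so that the $\|u^n\|_V^2$ term is absorbed into the dissipative contribution, Gronwall's lemma delivers the uniform bound \eqref{BBB} for $u^n$. A standard weak-limit argument (together with Aubin--Lions to handle the $C([0,T],H)$ continuity) produces the desired solution $u$.

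The main obstacle I expect is the rigorous justification of the boundary contributions at $x=L$, where the weight $(L-x)^{-1}$ is singular; this is what forces the approximation step, since only for $u^n\in\mathcal D(A_2)\subset H^3(0,L)$ do we have enough regularity for the traces $u^n(L)=u^n_x(L)=0$ to yield the quadratic decay of $u^n$ at $L$ needed to make all the integrations by parts legitimate. Once the weighted estimate is established for regular data, passing to the limit is routine given the linearity and the continuous dependence built into \eqref{BBB}.
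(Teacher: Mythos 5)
Your proposal is correct and follows essentially the same route as the paper: the paper likewise takes the $H$-inner product of the equation with $u$ for regularized data $u_0\in\mathcal D(A_2)$, $h$ smooth (which is exactly your multiplier $u/(L-x)$), uses the coercivity/dissipativity computation from Theorem \ref{thm30} together with the integration by parts $\int_0^L(\rho h)_x\frac{u}{L-x}\,dx=-\int_0^L\rho h\bigl(\frac{u_x}{L-x}+\frac{u}{(L-x)^2}\bigr)dx$ and Hardy's inequality \eqref{P1}, absorbs the $\varepsilon\|u\|_V^2$ term, applies Gronwall, and concludes by density and linearity. The only differences are cosmetic (the paper gets well-posedness of the regularized problem directly from the semigroup generated by $A_2$ rather than via Proposition \ref{prop20}, and obtains uniqueness from semigroup theory rather than a separate Gronwall argument).
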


\begin{proof}
Assume that $u_0\in {\mathcal D}(A_2)$ and  $h\in C_0^\infty ((0,T)\times (0,L))$, so that $f\in C^1([0,T],H)$.  
Taking the inner product of $u_t-A_2u-f=0$ with $u$ in $H$ yields 
\be
\label{AAA90}
 (u_t,u)_H=-a(u,u)+(f,u)_H \le -C ||u||_V^2 + \frac{L}{18\varepsilon} ||u||_H^2 +(f,u)_H,
 \ee
where we used \eqref{AAA10}. 
Then
\begin{eqnarray*}
| (f,u)_H| &=& \vert \int_0^L (\rho (x)h)_x \frac{u}{L-x} dx \vert \\
&=& \vert \int_0^L \rho (x) h \big( \frac{u_{x}}{L-x} + \frac{u}{ (L-x)^2 } \big) dx \vert \\ 
&\le&  C  ||h||_{L^2} ( || \frac{u_{x}}{L-x} ||_{L^2} + || \frac{u}{ (L-x)^2 } ||_{L^2} )  \\
&\le& C||h||_{L^2} ||u||_V,
\end{eqnarray*}
where we used \eqref{P1} in the last line. 
Thus, we have that 
\[
|(f ,u)_H |  \le \frac{C}{2} || u ||_V^2 + C' || h ||_{L^2}^2
\]
which, when combined with \eqref{AAA90}, gives after integration over $(0,\tau)$ for $0<\tau<T$
\[
||u(\tau )||_{H} ^2 + C\int_0^\tau ||u||^2_V  dt \le  ||u_0||_H^2 + C'' \big( \int_0^\tau ||u||_H^2 dt + \int_0^\tau \!\!\!\int_0^L |h|^2 dxdt \big). 
\]
 An application of Gronwall's lemma yields
 \eqref{BBB} for  $u_0\in {\mathcal D} (A_2)$ and $h\in C_0^\infty ((0,T)\times (0,L))$. 
A density argument allows us to construct a solution
$u\in C([0,T],H) \cap L^2(0,T,V)$ of \eqref{H5}-\eqref{H7} satisfying \eqref{BBB}  for $u_0\in H$ and $h\in L^2(0,T,L^2(0,L))$.
The uniqueness follows from classical semigroup theory.  
\end{proof}

\subsection{Controllability of the linearized system}
We turn our attention to the control properties of the linear system
\ba
u_t+u_{xxx}+u_x &=& f=(\rho (x)h)_x, \label{H21}\\
u(t,0) = u(t,L) = u_x(t,L) &=& 0,  \label{H22}\\
u(0,x) &=& u_0(x). \label{H23}
\ea
\begin{theorem}
\label{thm11}
Let $T>0$ , $\nu \in (0,L)$ and $\rho (x)$ as in \eqref{H4}. Then there exists a continuous linear operator
$\Gamma : L^2_{\frac{1}{L-x} dx}\to L^2(0,T,L^2(0,L))\cap L^2_{(T-t)dt} (0,T,H^1(0,L))$ such that for any $u_1\in L^2_{\frac{1}{L-x} dx}$, the solution 
$u$  of \eqref{H21}-\eqref{H23} with  $u_0=0$ and $h=\Gamma (u_1)$ satisfies $u(T,x)=u_1(x)$ in $(0,L)$. 
\end{theorem}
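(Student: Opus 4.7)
\bigskip

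\noindent\emph{Proof plan.} The plan is to use the Hilbert Uniqueness Method (HUM). By duality, the exact controllability of the linear system \eqref{H21}-\eqref{H23} in the weighted space $H=L^2_{\frac{1}{L-x}dx}$ is equivalent to an observability inequality for the backward adjoint system
\begin{equation*}
-v_t-v_x-v_{xxx}=0 \text{ in } (0,T)\times(0,L),\qquad v(t,0)=v(t,L)=v_x(t,0)=0,\qquad v(T,\cdot)=v_T,
\end{equation*}
posed in the dual space $H'=L^2_{(L-x)dx}$. Indeed, multiplying \eqref{H21} by $v$, integrating by parts and using $u_0=0$ yields the pairing $\int_0^L u(T,x)v_T(x)\,dx=-\int_0^T\!\!\int_0^L \rho(x)\,h(t,x)\,v_x(t,x)\,dx\,dt$, so that the operator $B:h\mapsto u(T)$ is surjective from $L^2(Q)$ onto $H$ exactly when $v_T\mapsto \rho v_x$ is bounded below from $H'$ into $L^2(Q)$.

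\smallskip

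First, I would establish well-posedness of the backward adjoint in $H'=L^2_{(L-x)dx}$, together with a Kato smoothing effect, by a variant of Theorem \ref{thm30}: one sets $V'=\{w\in H^1_0(0,L):\ (L-x)\,w_x\in L^2(0,L)\}$ and $W'=H^2_0(0,L)$, constructs the adjoint semigroup via a coercive bilinear form on $V'\times W'$, and obtains an estimate
\[
\|v\|_{L^\infty(0,T;H')}^2+\int_0^T\!\!\int_0^L(L-x)\,v_x^2\,dx\,dt\le C\|v_T\|_{H'}^2.
\]
Next, I would prove the observability inequality
\begin{equation*}
\|v_T\|_{H'}^2\le C\int_0^T\!\!\int_0^L \rho(x)^2\,v_x(t,x)^2\,dx\,dt.
\end{equation*}
The strategy is to multiply the adjoint equation by $(L-x)\chi(x)^2 v$, where $\chi\in C^\infty([0,L])$ is a cut-off equal to $1$ on $[0,L-\nu]$ and to $0$ near $L$; the resulting energy identity bounds a global weighted quantity on $v$ by the observation $\int_0^T\!\!\int_{L-\nu}^L v_x^2$ plus a lower-order remainder of the form $\|v\|_{L^2((0,T);L^2(\omega_0))}^2$ (with $\omega_0\Subset(L-\nu,L)$). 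This remainder is removed by the standard compactness-uniqueness argument, whose unique continuation ingredient---namely, that $v_x\equiv 0$ on $(0,T)\times(L-\nu,L)$ together with the adjoint equation forces $v\equiv 0$---follows from Holmgren's theorem (the KdV operator having analytic coefficients), or alternatively from a Carleman estimate adapted from Proposition \ref{prop10}.

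\smallskip

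With observability in hand, HUM is applied in the usual way: define on $H'$ the functional
\[
J(v_T):=\tfrac{1}{2}\int_0^T\!\!\int_0^L\rho(x)^2 v_x^2\,dx\,dt-\int_0^L u_1(x)\,v_T(x)\,dx,
\]
which is continuous and strictly convex by Kato smoothing, and coercive on the completion $\mathcal H$ of $H'$ for the norm $\|v_T\|_{\mathcal H}^2=\int_0^T\!\!\int\rho^2 v_x^2$. The unique minimizer $v_T^\ast$ yields via the Euler equation the control $h:=-\rho\,v_x^\ast$, and the identity $\int u_1 v_T\,dx=-\int_0^T\!\!\int \rho h v_x\,dx\,dt$ for all $v_T\in\mathcal H$ translates, thanks to Proposition \ref{prop45}, into $u(T)=u_1$. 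The map $\Gamma:u_1\mapsto h$ is continuous from $H$ to $L^2(Q)$ by construction. The refined weighted bound $h\in L^2_{(T-t)dt}(0,T;H^1(0,L))$ comes from the backward smoothing of the adjoint semigroup: iterating Kato-type estimates, $\|v(t)\|_{H^k}$ grows like a negative power of $T-t$, hence $\|v_x(t)\|_{H^1}^2$ is integrable against the weight $(T-t)\,dt$.

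\smallskip

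\emph{Main obstacle.} The delicate point is the observability inequality in the weighted space $L^2_{(L-x)dx}$: the $(L-x)$ weight degenerates precisely where the control is localized, so the multiplier identity has to be balanced against the Kato smoothing term very carefully, and the compactness-uniqueness step relies on a unique-continuation property which is standard only because the underlying coefficients of KdV are constant. Getting the weighted regularity of $h$ with the right power of $(T-t)$ requires a correspondingly refined version of the Kato smoothing for the adjoint, which is the second technical bottleneck.
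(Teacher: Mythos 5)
Your overall architecture coincides with the paper's: HUM with the backward adjoint system, an observability inequality in $L^2_{(L-x)dx}$ obtained from a multiplier identity plus a compactness--uniqueness argument whose unique-continuation ingredient is Holmgren's theorem, and a smoothing/Fubini argument for the weighted regularity of $h$. (The paper gets the adjoint well-posedness and Kato smoothing by the reflection $w(t,x)=v(T-t,L-x)$ together with Proposition \ref{prop20} rather than by a second variational construction, and it phrases HUM through the operator $\Lambda$ rather than the functional $J$; these differences are immaterial.)

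The genuine gap is in the multiplier you propose for the observability inequality, which is the heart of the proof. Multiplying $-v_t-v_x-v_{xxx}=0$ by $p(x)v$ with $p(x)=(L-x)\chi(x)^2$ and integrating over $(0,T)\times(0,L)$ gives, using $v(t,0)=v(t,L)=v_x(t,0)=0$ and $p(L)=0$,
\begin{equation*}
\int_0^L p\,\frac{|v_T|^2}{2}\,dx=\int_0^L p\,\frac{|v(0,\cdot)|^2}{2}\,dx+\int_0^T\!\!\!\int_0^L(p'+p''')\,\frac{v^2}{2}\,dxdt-\frac{3}{2}\int_0^T\!\!\!\int_0^L p'\,v_x^2\,dxdt .
\end{equation*}
Two things go wrong. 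First, the nonnegative term $\int_0^L p\,|v(0,\cdot)|^2/2\,dx$ on the right is neither an observation nor a lower-order remainder: bounding it by the observation is itself an observability statement, so the identity as written only delivers a Kato-smoothing-type estimate (data controls gradients), not the reverse. Second, since $\chi$ vanishes near $x=L$, the left-hand side $\int_0^L(L-x)\chi^2|v_T|^2dx$ does not dominate $\Vert v_T\Vert^2_{L^2_{(L-x)dx}}$; the mass of $v_T$ on $(L-\nu,L)$ is lost, and HUM needs the full dual norm. The paper's choice repairs both defects simultaneously: in the reflected variables it takes $q(t,x)=(T-t)b(x)$ (i.e.\ $t\,b(L-x)$ in your variables), with $b$ nondecreasing, $b(y)=y$ near $y=0$ and $b\equiv 1$ for $y\ge \nu/2$. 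The time factor vanishes at the data end of the time interval, so the uncontrolled boundary-in-time term drops out; $b'\ge 0$ is supported in the observation region, so the gradient term has the right sign; and $b(L-x)\ge c\,(L-x)$ recovers the full weighted norm of $v_T$. With that correction, your remaining steps --- removal of the global $\int\!\!\int v^2$ remainder by compactness--uniqueness, unique continuation from $v_x\equiv 0$ on the observation region together with $v(t,L)=0$ and Holmgren, and the estimate $\int_0^T(T-t)\Vert v\Vert^2_{H^2}\,dt\le C\Vert v_T\Vert^2_{L^2_{(L-x)dx}}$ (which the paper proves by Fubini from $\int_0^T\Vert w\Vert^2_{H^2}dt\le C\Vert w_0\Vert^2_{H^1_0}$ and $\int_0^T\Vert w\Vert^2_{H^1_0}dt\le C\Vert w_0\Vert^2_{L^2_{xdx}}$, rather than from a pointwise blow-up rate in $T-t$) --- do match the paper's proof.
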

Note that the forcing term $f=(\rho (x)h)_x$ is actually a {\em function} in $L^2_{(T-t) dt}(0,T,L^2(0,L))$ supported in $(0,T)\times (L-\nu ,L)$.  
\begin{proof}
We use the Hilbert Uniqueness Method  (see e.g. \cite{Lions1}). 
Introduce the adjoint system
\ba
-v_t-v_{xxx}-v_x &=& 0, \label{H25}\\
v(t,0) = v(t,L) = v_x(t,0) &=& 0,  \label{H26}\\
v(T,x) &=& v_T(x). \label{H27}
\ea
If $u_0\equiv 0$, $v_T\in {\mathcal D} (0,L)$, and $h\in {\mathcal D} ((0,T)\times (0,L))$, then multiplying in \eqref{H21} by $v$ and integrating over
$(0,T)\times (0,L)$  gives
\[
\int_0^L u(T,x)v_T(x)dx =\int_0^T\!\!\!\int_0^L (\rho (x)h)_x v dxdt =-\int_0^T\!\!\!\int_0^L \rho (x)h v_x dxdt.
\]
The usual change of variables $x\to L -  x$, $t\to T-t$, combined with Proposition \ref{prop20}, gives
\[
||v||_{L^\infty (0,T,L^2_{ (L-x)dx})} + || v || _{L^2(0,T,H^1 (0,L))} \le C ||v_T ||_{L^2_{ (L-x)dx} }.
\] 
By a limiting argument, we obtain that for all $h\in L^2(0,T,L^2(0,L))$ and all $v_T\in L^2_{(L-x)dx}$,
\[
\langle u(T,.), v_T\rangle _{L^2_{ \frac{1}{L-x} dx }, L^2_{(L-x)dx}}
=-\int_0^T (h,\rho (x)v_x)_{L^2}dt,
\]
where  $u$ and $v$  denote the solutions of \eqref{H21}-\eqref{H23} and \eqref{H25}-\eqref{H27}, respectively, and 
$\left\langle \cdot,\cdot\right\rangle _{L^2_{ \frac{1}{L-x} dx}, L^2_{(L-x)dx}}$ denotes the duality pairing between
$L^2_{\frac{1}{L-x} dx}$ and $L^2_{( L-x)  dx}$. We have to prove the following observability inequality
\begin{equation}
\label{observabilite}
||v_T||^2_{L^2_{ (L-x)dx}} \le C \int_0^T\!\!\!\int_0^L |\rho (x)v_x|^2dxdt
\end{equation}
or, equivalently, letting $w(t,x)=v(T-t,L-x)$, 
\be
\label{H30}
||w_0||^2 _{L^2_{xdx}} \le C \int_0^T\!\!\!\int_0^L |\rho (L-x)w_x|^2 dxdt
\ee
where $w$ solves
\be
\label{H40}
\left\{ 
\begin{array}{l}
w_t+w_{xxx}+w_x=0,\\
w(t,0)=w(t,L)=w_x(t,L)=0,\\
w(0,x)=w_0(x).
\end{array}
\right.
\ee
From \cite{Rosier}, we know that for any $q\in C^\infty ([0,T]\times [0,L])$ 
\begin{multline*}
-\int_0^T\!\!\! \int_0^L (q_t + q_{xxx}+q_x) \frac{w^2}{2} dxdt + \int_0^L (q\frac{w^2}{2})(T,x)dx - \int_0^L (q\frac{w^2}{2})(0,x)dx \\
+\frac{3}{2}\int_0^T\!\!\!\int_0^L q_x w_x^2 dxdt + \int_0^T (q\frac{w_x^2}{2})(t,0)dt=0.
\end{multline*}
We pick $q(t,x)=(T-t)b(x)$, where $b\in C^\infty ([0,L]) $ is nondecreasing and satisfies
\[
b(x) = \left\{ 
\begin{array}{ll}
x & \text{ if }\  0<x<\nu /4 ,\\
1 & \text{ if }\  \nu /2 <x<L. 
\end{array}
\right.
\]
This yields
\ba
||w_0||^2_{L^2_{xdx}}  
&\le& C(L,\nu ) \int_0^L b(x) w_0^2 (x) dx \nonumber \\
&\le& C(T,L,\nu ) \big(  \int_0^T\!\!\! \int_0^{\frac{\nu}{2}}  w_x^2 dxdt + \int_0^T\!\!\!\int_0^L w^2 dxdt\big) .\label{H45}
\ea
If the estimate 
\be
\label{H48}
||w_0||^2_{L^2_{xdx}} \le C \int_0^T\!\!\!\int_0^{\frac{\nu}{2}} w_x^2 dxdt
\ee
fails, then one can find a sequence $\{w_0^n\}\subset L^2_{xdx}$ such that 
\be
\label{H50}
1=||w_0^n||^2_{L^2_{xdx}} >n \int_0^T\!\!\!\int_0^{\frac{\nu}{2}} |w_x^n|^2 dxdt,
\ee
where $w^n$ denotes the solution of \eqref{H40} with $w_0$ replaced by $w_0^n$.  By \eqref{H700} and \eqref{H50}, $\{ w^n\} $ is bounded 
in $L^2(0,T,H^1(0,L))$, hence also in $H^1(0,T,H^{-2}(0,L))$ by \eqref{H40}. Extracting a subsequence, we have by Aubin-Lions' lemma  that
$w^n$ converges strongly in $L^2(0,T,L^2(0,L))$. Thus, using \eqref{H45} and \eqref{H50}, we see that $w_0^n$ is a Cauchy sequence in $L^2_{xdx}$,
and hence it converges strongly in this space. Let $w_0$ denote its limit in $L^2_{xdx}$, and let $w$ denote the corresponding solution of \eqref{H40}.
Then 
\begin{eqnarray*}
&&||w_0||_{L^2_{xdx}} = 1, \\
&&w^n \to w \qquad \text{ in } L^2(0,T,H^1(0,L)).  
\end{eqnarray*}  
But $w_x^n\to 0$ in $L^2(0,T,L^2(0,\nu /2))$ by \eqref{H50}. Thus $w_x\equiv 0$ in $(0,T)\times (0,\nu/2 )$, and hence 
$w(t,x) = g(t)$ (for some function $g$) in $(0,T)\times (0,\nu /2 )$. Since $w$ satisfies \eqref{H40}, we infer from $w(t,0)=0$ that $w\equiv 0$ in $(0,T)\times (0,\nu/2 )$, and also 
in $(0,T)\times (0,L)$  by Holmgren's theorem. This would imply that $w(0,x)=0$, in contradiction with $|| w_0||_{L^2_{xdx}}=1$.  Therefore \eqref{H48} is proved, 
and \eqref{H30} follows at once. 

We are in a position to apply H.U.M. Let $\Lambda (v_T) = (L-x)^{-1} u(T,.)\in L^2_{ (L-x) dx}$, where $u$ solves \eqref{H21}-\eqref{H23} with 
$h=-\rho (x) v_x$. Then $\Lambda :  L^2_{ (L-x) dx} \to L^2_{ (L-x) dx}$ is clearly continuous. On the other hand, from
\eqref{observabilite}
\[
\big( \Lambda (v_T), v_T\big)_{L^2_{ (L-x)dx }} = \langle u(T,.), v_T\rangle _{L^2_{ \frac{1}{L-x} dx }, L^2_{(L-x)dx}} 
=\int_0^T ||\rho (x)v_x||^2_{L^2} dt \ge C ||v_T||^2_{L^2_{(L-x) dx}} ,
\]
and it follows that the map $v_T \to \Lambda (v_T)$ is invertible in $L^2_{ (L-x)dx } $. 

Define the map $\Gamma: \ L^2_{\frac{1}{L-x} dx} \to L^2(0,T,L^2(0,L))$ by
$\Gamma (u_1) = h := -\rho (x)v_x$, 
where $v$ is the solution of \eqref{H25}-\eqref{H27} with $v_T=\Lambda ^{-1}( (L-x)^{-1}u_1)$. $\Gamma$ is continuous
from   $L^2_{\frac{1}{L-x} dx}$  to  $L^2(0,T,L^2(0,L))$, and the solution $u$ of \eqref{H21}-\eqref{H23} with $u_0=0$ and $h=\Gamma (u_1)$ satisfies $u(T,.)=u_1$. 
To prove that $\Gamma $  is also continuous from  $L^2_{\frac{1}{L-x}dx}$  into  
$L^2_{ (T - t ) dt} (0,T,H^1(0,L))$, it is sufficient to prove the following
estimate 
\[
\int_0^T ||v (t) ||^2_{H^2}  (T-t) dt \le C ||v_T||^2_{L^2_{(L-x)dx}},
\]
for the solutions of \eqref{H25}-\eqref{H27} or, alternatively, the estimate 
\be
\label{H60}
\int_0^T ||w||^2_{H^2}\, tdt \le C ||w_0||^2_{L^2_{xdx}}
\ee
for the solutions of \eqref{H40}.  
By Proposition \ref{prop20}, 
\be
\label{H61}
\int_0^T ||w||^2_{H^1_0(0,L)} dt \le C ||w_0||^2_{L^2_{xdx}}.
\ee
This yields for $w_0\in L^2(0,L)$
\be
\label{H62}
\int_0^T ||w||^2_{H^1_0(0,L)} dt \le C ||w_0||^2_{L^2}.
\ee
Assume now that $w_0\in {\mathcal D}(A)$, and let $u_0=Aw_0=-w_{0,xxx}-w_{0,x}$.
Denote by $w$ (resp. $u$) the solution of \eqref{H40} issuing from $w_0$ (resp. $u_0$). Then
\[
Aw=-w_{xxx}-w_x=u\in L^2(0,T,H^1_0(0,L)), 
\]
and we infer that $w\in L^2(0,T,H^4(0,L))$. By interpolation, this gives that $w\in L^2(0,T,H^2(0,L))$ if $w_0\in H^1_0(0,L)$, with an estimate of the form
\be
\label{H63}
\int_0^T ||w||^2_{H^2(0,L)} dt \le C ||w_0||^2_{ H^1_0(0,L) }.
\ee
The different constants $C$ in \eqref{H61}-\eqref{H63} may be taken independent of $T$ for $0<T<T_0$. Thus, using Fubini's theorem,  we obtain
\[
\int_0^T s||w(s)||^2_{H^2} ds = \int_0^T\!\!\!  ( \int_t^T ||w(s)||^2_{H^2}ds) dt \le C \int_0^T ||w (t) ||^2_{H^1_0(0,L)} dt \le C ||w_0||^2_{L^2_{xdx}}. 
\]  
This completes the proof of \eqref{H60} and of Theorem \ref{thm11}.
\end{proof}

\subsection{Exact controllability of the nonlinear system}

Our aim is to prove the local exact controllability in $L^2_{\frac{1}{L-x} dx}$ of system  (\ref{ec1}). Note
that the solutions of (\ref{ec1}) can be written as%
\[
u=u_{L}+u_{1}+u_{2}\text{,}%
\]
where $u_{L}$ is the solution of (\ref{ec_lin}) with
initial data $u_{0}  \in L^2_{ \frac{1}{L-x}  dx}$, $u_{1}$ is solution of
\begin{equation}
\left\{
\begin{array}
[c]{lll}%
u_{1,t}+u_{1,x}+u_{1,xxx}= f = (\rho (x)h)_x  &  & \text{in }( 0,T)  \times(  0,L)  \text{,}\\
u_{1}(  t,0)  =u_{1}(  t,L)  =u_{1,x}(  t,L) =0 &  & \text{in }(  0,T)  \text{,}\\
u_{1}(  0,x)  =0 &  & \text{in }(  0,L)
\end{array}
\right.  \label{non_1}%
\end{equation}
with $h=h(  t,x)  \in L^{2}(  0,T;L^2( 0,L)  ) $, 
and $u_{2}$ is solution of
\begin{equation}
\left\{
\begin{array}
[c]{lll}%
u_{2,t}+u_{2,x}+u_{2,xxx}=g(  t,x)  &  & \text{in }(
0,T)  \times(  0,L)  \text{,}\\
u_{2}(  t,0)  =u_{2}(  t,L)  =u_{2,x}(  t,L)
=0 &  & \text{in }(  0,T)  \text{,}\\
u_{2}(  0,x)  =0 &  & \text{in }(  0,L)  \text{,}%
\end{array}
\right.  \label{non_2}%
\end{equation}
with $g=g(t,x)  = - uu_{x}$.

The following result is concerned with the solutions of the non-homogeneous system (\ref{non_2}).

\begin{proposition}
\label{prop_weight} (i) Let $H$ and $V$ be as in \eqref{AAA1}-\eqref{AAA2}
If $u,v\in L^{2}(  0,T;V )$, then $uv_{x}\in L^{1}(  0,T;H)  $. Furthermore, the map%
\[
(u,v)\in L^{2}(  0,T;V)^2  \to uv_{x}\in
L^{1}(  0,T;H)
\]
is continuous and there exists a constant $c>0$ such that
\begin{equation}
\left\Vert uv_{x}\right\Vert _{L^{1}(  0,T;H)  }\leq c\left\Vert u\right\Vert  _{L^{2}( 0,T;V)  }  \left\Vert v\right\Vert  _{L^{2}( 0,T;V)  }  
\text{.} \label{non_3}
\end{equation}
(ii) For $g\in L^{1}(  0,T;H)  $, the mild solution $u$ of \eqref{non_2} given by Duhamel formula satisfies
\[
u_{2}\in C(  \left[  0,T\right]  ;H)  \cap L^{2}(  0,T;V)
=: \mathcal{G}
\]
and we have the estimate 
\be
\label{CCC}
||u_2 ||_{L^\infty (0,T,H) } + ||u _2||_{L^2(0,T,V)} \le C ||g||_{L^1(0,T,H)}.
\ee
\end{proposition}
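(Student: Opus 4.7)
For part (i), the key observation is the pointwise bound on $u$ near the endpoint $x=L$. Since any $u\in V$ vanishes at $x=L$, we can write $u(x)=-\int_x^L u_y(y)\,dy$, and Cauchy--Schwarz with the splitting $|u_y|=\frac{|u_y|}{L-y}(L-y)$ gives
\[
|u(x)|^2\le \left(\int_x^L \frac{|u_y|^2}{(L-y)^2}\,dy\right)\left(\int_x^L (L-y)^2\,dy\right)\le \frac{(L-x)^3}{3}\,\|u\|_V^2.
\]
Inserting this into the norm $\|uv_x\|_H^2=\int_0^L u^2 v_x^2/(L-x)\,dx$ and using $(L-x)^2\le L^4/(L-x)^2$, I would obtain
\[
\|uv_x\|_H^2\le \frac{\|u\|_V^2}{3}\int_0^L (L-x)^2 v_x^2\,dx \le \frac{L^4}{3}\,\|u\|_V^2\,\|v\|_V^2,
\]
so that $\|uv_x\|_H\le c\,\|u\|_V\|v\|_V$ pointwise in $t$; integrating in $t$ and applying Cauchy--Schwarz yields \eqref{non_3}.

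For part (ii), I will use Duhamel's formula together with the two ingredients already available in the paper: the contraction semigroup generated by $A_2$ on $H$ (Theorem~\ref{thm30}), and the Kato smoothing estimate $\|e^{tA_2}u_0\|_{L^2(0,T;V)}\le C\|u_0\|_H$ of Proposition~\ref{prop40}. Writing
\[
u_2(t)=\int_0^t e^{(t-s)A_2} g(s)\,ds,
\]
the contraction property immediately gives
\[
\|u_2(t)\|_H\le \int_0^t \|g(s)\|_H\,ds\le \|g\|_{L^1(0,T;H)}.
\]
The continuity $u_2\in C([0,T];H)$ is then a standard consequence of the strong continuity of the semigroup and a density argument approximating $g\in L^1(0,T;H)$ by smooth data.

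For the $L^2(0,T;V)$ estimate, I would apply Minkowski's integral inequality: by Proposition~\ref{prop40} applied to each fixed $s$ with initial data $g(s)\in H$ (and replacing $T$ by $T-s$, with a constant uniform on $[0,T]$),
\[
\left(\int_0^T \|u_2(t)\|_V^2\,dt\right)^{1/2}\le \int_0^T \left(\int_s^T \|e^{(t-s)A_2}g(s)\|_V^2\,dt\right)^{1/2}ds \le C\int_0^T \|g(s)\|_H\,ds,
\]
which gives \eqref{CCC}. No real obstacle is expected; the only minor point to check carefully is the uniformity of the Kato constant $C(L,T)$ when shifting the time interval, which follows from the proof of Proposition~\ref{prop40} (the computation leading to the bound relies only on the dissipativity estimate \eqref{AAA10} and can be carried out on any subinterval of $[0,T]$).
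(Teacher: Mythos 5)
Your proof is correct, and for part (ii) it takes a genuinely different route from the paper's. For (i) the two arguments are close in spirit: the paper simply writes $\|uv_x\|_H \le \|u\|_{L^\infty}\,\|(L-x)^{-1/2}v_x\|_{L^2} \le C\|u\|_V\|v\|_V$, whereas you sharpen the pointwise control of $u$ to $|u(x)|\le 3^{-1/2}(L-x)^{3/2}\|u\|_V$ and then absorb the resulting weight $(L-x)^2$ into $\|v\|_V$; both are one-line Cauchy--Schwarz arguments and yield \eqref{non_3}. For (ii) the paper does not use Duhamel for the $V$-estimate at all: it takes the $H$-inner product of $u_{2,t}=A_2u_2+g$ with $u_2$, invokes the coercivity estimate \eqref{AAA10} to produce the term $-C\|u_2\|_V^2$, bounds $\int_0^T(g,u_2)_H\,dt$ by $\|g\|_{L^1(0,T,H)}\|u_2\|_{L^\infty(0,T,H)}$ together with the semigroup bound on $\|u_2\|_{L^\infty(0,T,H)}$, and integrates in time (first for smooth data, then by density). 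Your argument instead feeds the homogeneous Kato smoothing estimate of Proposition \ref{prop40} into Duhamel's formula via Minkowski's integral inequality in $L^2(0,T;V)$. This is valid, and the uniformity issue you flag is harmless: $\int_s^T\|e^{(t-s)A_2}g(s)\|_V^2\,dt=\int_0^{T-s}\|e^{\tau A_2}g(s)\|_V^2\,d\tau\le C(L,T)^2\|g(s)\|_H^2$ simply because $(0,T-s)\subset(0,T)$, so no re-examination of the proof of Proposition \ref{prop40} is even needed. The trade-off is that your route reuses Proposition \ref{prop40} as a black box and is more modular, while the paper's energy identity rederives the smoothing for the inhomogeneous problem directly from \eqref{AAA10} and avoids invoking Minkowski's inequality for Bochner integrals.
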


\begin{proof}
For $u,v\in V$, we have 
\[
||uv_x||_{L^2_{ \frac{1}{L-x}dx }} \le || u ||_{L^\infty} || \frac{v_x}{\sqrt{L-x} }||_{L^2}\le C ||u||_V ||v||_V.  
\]
This gives (i). For (ii), we first assume that $g\in C^1([0,T],H)$, so that $u_2\in C^1([0,T],H)\cap C^0([0,T],{\mathcal D} (A_2))$. Taking the inner product
of $u_{2,t}=A_2u_2+g$ with $u_2$  in $H$ yields
\be
\label{AAA80} 
(u_{2,t},u_2)_H \le -C||u_2||^2_V + C' ||u_2||_H^2 + (g,u_2)_H 
\ee
where $C,C'$ denote some positive constants.
Integrating over $(0,T)$ and using the classical estimate 
\[ 
||u_2||_{L^\infty (0,T,H) } \le C ||g||_{L^1(0,T,H)}  
\]  
coming from semigroup theory, we obtain  (ii) when $g\in C^1([0,T],H)$. The general case ($g\in L^1(0,T,H)$) follows by density.  \end{proof}

Let $\Theta _1 (h):=u_1$ and $\Theta _2(g):=u_2$, where $u_1$ (resp. $u_2$) denotes the solution of \eqref{non_1} (resp. \eqref{non_2}). 
Then  $\Theta_{1}:L^{2}(  0,T;L^2(  0,L)  )  \to \mathcal{G}$ and $\Theta_{2}:L^{1}(  0,T;L^2_{ \frac{1}{L-x}  dx}      )\to \mathcal{G}$  are
well-defined continuous operators, by Propositions \ref{prop45} and \ref{prop_weight}. 

Using Proposition \ref{prop_weight} and the contraction mapping principle, one can prove as in \cite{goubet,mvz,Rosier} 
the existence and uniqueness of
a solution  $u\in {\mathcal G} $ of \eqref{ec1} when the initial data $u_0$ and the forcing term $h$ are small enough. As the proof is similar to those of Theorem 
\ref{exact_nlin}, it will be omitted.  

We are in a position to prove the main result of Section \ref{section4}, namely the (local) exact controllability of system
(\ref{ec1}).

\begin{theorem}
\label{exact_nlin} Let $T>0$. Then there exists $\delta>0$ such that for any
$u_{0}$, $u_{1}\in L_{  \frac{1}{L-x}  dx}^{2}$ satisfying
$\left\Vert u_{0}\right\Vert _{L^2_{  \frac{1}{L-x}  dx}} \leq \delta,\    \left\Vert u_{1}\right\Vert _{L^2_{  \frac{1}{L-x}  dx}}\leq\delta$, 
one can find a control function $h\in L^2(  0,T;L^2 (  0,L)  )  $ such that the solution $u\in {\mathcal G} $ of (\ref{ec1})
satisfies $u(  T,\cdot)  =u_1$ in $(  0,L)  $.
\end{theorem}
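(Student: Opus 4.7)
The plan is to prove Theorem \ref{exact_nlin} by the standard contraction mapping argument built on top of the linear controllability result (Theorem \ref{thm11}) and the decomposition $u=u_L+u_1+u_2$ already introduced. Denote by $H=L^2_{\frac{1}{L-x}dx}$ and $V$ as in \eqref{AAA1}--\eqref{AAA2}, so that $\mathcal{G}=C([0,T];H)\cap L^2(0,T;V)$. For $u_0,u_1\in H$ with $\|u_0\|_H,\|u_1\|_H\le\delta$, let $u_L:=e^{tA_2}u_0$ be the free evolution (which by Proposition \ref{prop40} lies in $\mathcal{G}$ with $\|u_L\|_\mathcal{G}\le C\delta$), and define
\[
\Phi:\mathcal{G}\longrightarrow\mathcal{G},\qquad \Phi(u):=u_L+\Theta_1(h_u)+\Theta_2(-uu_x),
\]
where, using the bilinear estimate \eqref{non_3} to guarantee $uu_x\in L^1(0,T;H)$ and Proposition \ref{prop_weight}(ii) to obtain $\Theta_2(-uu_x)\in C([0,T];H)$, the control is defined by
\[
h_u:=\Gamma\bigl(u_1-u_L(T,\cdot)-\Theta_2(-uu_x)(T,\cdot)\bigr).
\]
By construction, any fixed point $u=\Phi(u)$ solves \eqref{ec1} with $f=(\rho h_u)_x$ and satisfies $u(T,\cdot)=u_1$, so the theorem reduces to finding such a fixed point.

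The key step is to show $\Phi$ maps a small closed ball $B_R\subset\mathcal{G}$ into itself and is a contraction. Combining the linear estimate $\|\Theta_1(h)\|_\mathcal{G}\le C\|h\|_{L^2(0,T;L^2)}$ from Proposition \ref{prop45}, the continuity of $\Gamma$ from $H$ to $L^2(0,T;L^2(0,L))$ from Theorem \ref{thm11}, the estimate \eqref{CCC} on $\Theta_2$, and the bilinear estimate \eqref{non_3}, for $u\in B_R$ one has
\[
\|\Theta_2(-uu_x)\|_{\mathcal{G}}+\|\Theta_2(-uu_x)(T,\cdot)\|_H\le C\|uu_x\|_{L^1(0,T;H)}\le C\|u\|_\mathcal{G}^2\le CR^2,
\]
hence $\|h_u\|_{L^2(0,T;L^2)}\le C(\delta+R^2)$ and therefore
\[
\|\Phi(u)\|_\mathcal{G}\le C_1\delta+C_2 R^2.
\]
Choosing $R=2C_1\delta$ and $\delta$ small enough so that $C_2 R\le 1/2$ yields $\Phi(B_R)\subset B_R$. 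For the contraction property, using the identity $uu_x-\tilde u\tilde u_x=\tfrac12\bigl((u-\tilde u)(u+\tilde u)\bigr)_x$ and \eqref{non_3} applied to each factor, together with the linearity of $\Gamma$, $\Theta_1$, $\Theta_2$, one obtains
\[
\|\Phi(u)-\Phi(\tilde u)\|_\mathcal{G}\le CR\,\|u-\tilde u\|_\mathcal{G},
\]
which is a strict contraction for $R$ (hence $\delta$) small enough. Banach's fixed point theorem then produces the desired $u\in\mathcal{G}$, and the corresponding $h=h_u\in L^2(0,T;L^2(0,L))$ is the sought control.

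The main technical obstacle lies in ensuring that $\Theta_2(-uu_x)$ makes sense as an element of $C([0,T];H)$ so that its terminal value can legitimately be fed into $\Gamma$; this hinges on the weighted bilinear estimate \eqref{non_3} (which exploits the Hardy-type inequality \eqref{P1} through $V$) together with the mild-solution bound \eqref{CCC}. Once this point is settled, the rest of the argument is a routine combination of the linear estimates already proved in Sections 2 and \ref{section4}, with no further subtlety beyond choosing $\delta$ small.
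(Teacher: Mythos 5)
Your proposal is correct and follows essentially the same route as the paper: the same decomposition $u=u_L+\Theta_1(h)+\Theta_2(-uu_x)$, the same choice of control via $\Gamma$ applied to the corrected target, and the same Banach fixed-point argument on a small ball using Propositions \ref{prop40}, \ref{prop45}, \ref{prop_weight} and Theorem \ref{thm11} (the only cosmetic differences being a sign convention in $\Theta_2$ and taking the ball in $\mathcal{G}$ rather than in $L^2(0,T;V)$).
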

As in the linear case, the forcing term $f=(\rho (x)h)_x$ is actually a {\em function} in $L^2_{ (T-t ) dt}(0,T,L^2(0,L))$ supported in $(0,T)\times (L-\nu ,L)$.
\begin{proof}
To prove this result, we apply the contraction mapping principle, following closely \cite{Rosier}. Let $\mathcal{F}$ denote
the nonlinear map%
\[
\mathcal{F}:L^{2}(  0,T; V )  \to \mathcal{G},%
\]
defined by%
\[
\mathcal{F}(  u   )  =u_{L} +\Theta_1 \circ\Gamma(  u_{T}-u_{L}( T,\cdot)  +\Theta_ 2(  uu_{x})  (  T,\cdot) )  - \Theta_2( uu_{x})  \text{,}%
\]
where $u_{L}$ is the  solution of (\ref{ec_lin}) with initial data $u_{0}  \in L^2_{\frac{1}{L-x}dx}$, 
$\Theta_{1}$ and $\Theta_{2}$ are defined as above, and $\Gamma$ is as in Theorem \ref{thm11}.

Remark that if $u$ is a fixed point of $\mathcal{F}$, then $u$ is a solution
of (\ref{ec1}) with the control $h=  \Gamma(  u_{T}-u_{L}( T,\cdot)  +\Theta_ 2(  uu_{x})  (  T,\cdot))$, 
and it satisfies
\[
u(  T,\cdot)  =u_{T},%
\]
as desired.
In order to prove the existence of a fixed point of $\mathcal{F}$, we apply the Banach
fixed-point theorem to the restriction of $\mathcal{F}$ to some closed ball
$\overline{B}(0,R)$ in $L^{2}(  0,T;V) $.

\noindent (i) $\mathcal{F}$ \textit{is contractive. }  Pick any $u,\tilde u\in \overline{B}(0,R)$. 
Using  \eqref{BBB} and     \eqref{non_3}-\eqref{CCC}, 
we deduce that for some constant $C$, independent of $u$,
$\tilde{u}$, and $R$, we have 
\begin{equation}
\left\Vert \mathcal{F}(  u)  -\mathcal{F}(  \tilde{u})
\right\Vert _{L^{2}(  0,T; V)  }%
\leq2CR\left\Vert u-\tilde{u}\right\Vert _{L^{2}(  0,T; V )  }\text{.} \label{fixed1}%
\end{equation}
Hence, $\mathcal{F}$ is contractive if $R$ satisfies
\begin{equation}
R<\frac{1}{4C}\text{,} \label{fixed2}%
\end{equation}
where $C$ is the constant in (\ref{fixed1}).

\noindent\noindent  (ii) $\mathcal{F}$ \textit{maps }$\overline{B}(0,R)$
\textit{into itself.}  Using Proposition \ref{prop40} and the continuity of the operators $\Gamma$, $\Theta_1$, and $\Theta_2$, we infer the existence of a constant $C'>0$ such that 
for any $u\in\overline{B}(0,R)$, we have
\[
\left\Vert \mathcal{F}(  u)  \right\Vert _{L^{2}( 0,T;V)  }\leq C'(  \left\Vert u_{0}%
\right\Vert _{L_{  \frac{1}{L-x}  dx}^{2}}+\left\Vert
u_{T}\right\Vert _{L_{  \frac{1}{L-x}  dx}^{2}}+R^{2})
\text{.}%
\]
Thus, taking $R$ satisfying (\ref{fixed2}) and $R<1/(2C')$ and assuming that $\left\Vert u_{0}\right\Vert
_{L_{ \frac{1}{L-x}  dx}^{2}}$ and $\left\Vert u_{T}\right\Vert
_{L_{  \frac{1}{L-x}  dx}^{2}}$ are small enough, we obtain that the operator
$\mathcal{F}$ maps $\overline{B}(0,R)$ into itself. Therefore the map
$\mathcal{F}$ has a fixed point in $\overline{B}(0,R)$ by the Banach fixed-point Theorem. The proof 
of Theorem \ref{exact_nlin} is complete.
\end{proof}

\noindent\textbf{Acknowledgments:} RC was supported by CNPq and Capes (Brazil)
via a Fellowship, Project `` Ci\^encia sem Fronteiras'' and Agence Nationale
de la Recherche (ANR), Project CISIFS, grant ANR-09-BLAN-0213-02. LR was
partially supported by the Agence Nationale de la Recherche (ANR), Project
CISIFS, grant ANR-09-BLAN-0213-02. AFP was partially supported by CNPq
(Brazil) and the Cooperation Agreement Brazil-France.

\end{document}